\newcommand{\e}{\rm e}
\newcommand{\rlaw}{\stackrel {(d)}{=}}
\def\e{\hbox {\rm e}}
\def\P{{\mathbb P}}
\def\R{{\bf R}}
\def\R{{\bf R}}
\def\E{{\mathbb E}}
\def\rp{\right)}
\def\lp{\left(}
\newtheorem{theorem}{Theorem}[section]
\newtheorem{lemma}[theorem]{Lemma}
\theoremstyle{definition}
\newtheorem{definition}[theorem]{Definition}
\theoremstyle{remark}
\newtheorem{remark}[theorem]{Remark}
\numberwithin{equation}{section}
\theoremstyle{proposition}
\newtheorem{corollary}[theorem]{Corollary}
\begin{document}

\title[Probabilistic aspects of  
Jacobi theta functions]{Probabilistic aspects of  
Jacobi theta functions}


\author{Paavo Salminen}
\address{ Abo Akademi University, 
Faculty of Science and Engineering,
FIN-20500 Abo, Finland}
\email{phsalmin@abo.fi}

\author{Christophe Vignat}
\address{Tulane University, New Orleans, USA and
Universit\'{e} Paris Saclay, L.S.S., CentraleSupelec, France}
\email{cvignat@tulane.edu}

\subjclass[2020]{60J65, 60E99, 33E05, 33E99}

\keywords{Jacobi theta function,  Mittag-Leffler series, Brownian motion, transition density, Green function, discrete Gaussian distribution, Kolmogorov distribution}

\begin{abstract}
 In this note we deduce well known modular identities for Jacobi theta functions using the spectral representations associated with the real valued Brownian motion taking values on $[-1,+1]$. We consider two cases: (i) reflection at $-1$ and $+1$, (ii) killing at $-1$ and $+1$. It is seen that these two representations give, in a sense, most compact forms of the modular theta-function identities. We study also  discrete Gaussian distributions generated by theta functions, and derive, in particular,  addition formulas for discrete Gaussian variables. 
\end{abstract}

\maketitle

\section{Introduction}
\label{sec0}

Our notation for theta functions come from Whittaker and Watson \cite{Whittaker} (see also Bellman \cite{Bellman}). Perhaps the most famous/well known Jacobi theta function is 
\begin{equation}
\theta_{3}\left(z,q\right):=\sum_{n=-\infty}^{\infty}q^{n^{2}}\cos\left(2nz\right),    
\end{equation}
where $z$ and $q$ are complex numbers. In the very heart of the theory of the theta functions are the so called modular identities, see 
\cite{Armitage}, \cite{Bellman} , \cite{BianePitmanYor},   and \cite{Couwenberg} for proofs of these identities and further references. 
Historically the first such identity  was discovered  for $\theta_3$ by Poisson in 1827  (see \cite{Bellman} p. 4). In this note we call it the third modular identity and write it as follows   
\begin{equation}
\left(-\imath\tau\right)^{\frac{1}{2}}\theta_{3}\left(z\vert\tau\right)=\exp\left(\imath\tau'\frac{z^{2}}{\pi}\right)\theta_{3}\left(z\tau'\vert\tau'\right),
\label{INTRO1}
\end{equation}
where $\theta_{3}\left(z\vert\tau\right):=\theta_{3}\left(z,q\right)$, $q=\e^{\imath\pi\tau}$ with $\tau\in \R$, and $ \tau'=-1/{\tau}$. Quoting Bellman ibid. p. 4,  identity (\ref{INTRO1}) "has amazing ramifications in the fields of algebra, number theory, geometry, and other parts of mathematics. In fact, it is not easy to find another identity of comparable significance." 

From the definition of the $\theta_3$-function it is seen that it satisfies the heat equation in one dimension. This connection is, of course, classical and is pointed out in many texts, see, e.g., \cite{Bellman} p. 16, \cite{NIST} p. 533 and \cite{BianePitmanYor} p. 437.
Brownian motion - the most important stochastic process - is also intimately connected with the heat equation. One aim of  this paper is to discuss/present a unified framework to the modular identities emerging from properties of Brownian motion on a finite interval. 

The paper is organized as follows. In the next section we define the theta functions in focus, and state the modular identities associated with these. In Section 3 two Brownian motions with different behaviors at the end points of the interval $(-1,1)$ are considered. The first one is reflected at $-1$ and $1$, and the other one is killed at these points. We present the spectral representations of the transitions densities.  In this same section we also derive the corresponding representations for the Green functions. The representations of the transitions densities are our main tools to prove the modular identities of the theta functions in Section 3. The representations of the Green functions are used in Section 4 to derive some well known Mittag-Leffler expansions. Some connections with modular identities and hitting times of Brownian motion and Bessel processes are studied in Section 5. Discrete Gaussian distributions associated with the $\theta_2$- and $\theta_3$-functions are taken up in Section 6. 

The main part of the paper consists of (new) proofs of known results. In Section 6, however,  the representation of the Kolmogorov distribution function via elliptic integrals, a convolution rule for the Gaussian $\theta_2$- and $\theta_3$-distributions  and  a stability result for the Gaussian 
$\theta_3$-distribution are, to our best knowledge, new results.

\medskip
\section{Modular theta-function identities}
\label{sec1}

We start with by recalling from \cite{NIST} p. 524  the definitions of the four Jacobi theta functions using their Fourier series expansions:
\begin{align*}
\theta_{1}\left(z,q\right)&=\sum_{n\in\mathbb{Z}}\left(-1\right)^{n-\frac{1}{2}}q^{\left(n+\frac{1}{2}\right)^{2}}\e^{\imath\left(2n+1\right)z}=2\sum_{n=0}^{\infty}\left(-1\right)^{n}q^{\left(n+\frac{1}{2}\right)^{2}}\sin\left(\left(2n+1\right)z\right),\\   
\theta_{2}\left(z,q\right)&=\sum_{n\in\mathbb{Z}}q^{\left(n+\frac{1}{2}\right)^{2}}\e^{\imath\left(2n+1\right)z}=2\sum_{n=0}^{\infty}q^{\left(n+\frac{1}{2}\right)^{2}}\cos\left(\left(2n+1\right)z\right),\\    
\theta_{3}\left(z,q\right)&=\sum_{n\in\mathbb{Z}}q^{n^{2}}\e^{\imath2nz}=1+2\sum_{n=1}^{\infty}q^{n^{2}}\cos\left(2nz\right)=\sum_{n=-\infty}^{\infty}q^{n^{2}}\cos\left(2nz\right),\\
\theta_{4}\left(z,q\right)&=\sum_{n\in\mathbb{Z}}\left(-1\right)^{n}q^{n^{2}}\e^{\imath2nz}=1+2\sum_{n=1}^{\infty}\left(-1\right)^{n}q^{n^{2}}\cos\left(2nz\right)\\&\hskip3.3cm=\sum_{n=-\infty}^{\infty}\left(-1\right)^{n}q^{n^{2}}\cos\left(2nz\right),    
\end{align*}
where $q,z\in {\bf C}$ and the {\it nome} $q$ satisfies $\vert q \vert <1$. 

As a consequence of Jacobi's triple product identity, the Jacobi theta functions have the following infinite product representations \cite[(10.7.7)]{Andrews}:
\begin{equation}
\nonumber
\theta_{1}\left(z,q\right)=
2q^{\frac{1}{4}}
\sin{z}
\prod_{n=1}^{\infty} \left(1-q^{2n}\right) \left(1-2q^{2n}\cos{\left(2z\right)}+q^{4n}\right),
\label{eq:theta1 infinite product}
\end{equation}
\begin{equation}
\nonumber
\theta_{2}\left(z,q\right)=
2q^{\frac{1}{4}}
\cos{z}
\prod_{n=1}^{\infty} \left(1-q^{2n}\right)\left(1+2q^{2n}\cos{\left(2z\right)}+q^{4n}\right),
\label{eq:theta2 infinite product}
\end{equation}
\begin{equation}
\nonumber
\theta_{3}\left(z,q\right)=\prod_{n=1}^{\infty} \left(1-q^{2n}\right)\left(1+2q^{2n-1}\cos{\left(2z\right)}+q^{4n-2}\right),
\label{eq:theta3 infinite product}
\end{equation}
\begin{equation}
\nonumber
\theta_{4}\left(z,q\right)=\prod_{n=1}^{\infty} \left(1-q^{2n}\right)\left(1-2q^{2n-1}\cos{\left(2z\right)}+q^{4n-2}\right).
\label{eq:theta4 infinite product}    
\end{equation}
Notice that straightforward calculations yield
\begin{equation}
\label{C2}
\theta_{4}\left(0,q\right)=\prod_{n=1}^{\infty}\frac{ \left(1-q^{n}\right)}{\left(1+q^{n}\right)}.
\end{equation}
Recall that $h\mapsto \theta_{4}(0,\exp(-2h^2)), h>0,$ is the Kolmogorov distribution function, see Chung \cite{Chung82} and the discussion around formula (\ref{Br2}) below. 

 To introduce the modular  identities (also called Jacobi's imaginary  transformation rules), let us define, with $\imath$ the imaginary unit, the \textit{lattice parameter} $\imath \tau$, with $\tau $ such that $\Im \tau >0$, and its conjugate $\imath \tau'$ by
\[
q=\e^{\imath\pi\tau},\,\,\tau'=-\frac{1}{\tau},
\]
and write, e.g., $\theta_{1}\left(z|\tau\right)=\theta_{1}\left(z,q\right)$. 
The four modular  identities, see  \cite[20.7.30-33]{NIST} p. 531, which are of main interest in our study, are the following:  

\medskip
\noindent 
{\bf 1. First modular identity}:
\begin{equation}
\left(-\imath\tau\right)^{\frac{1}{2}}\theta_{1}\left(z\vert\tau\right)=-\imath\exp\left(\imath\tau'\frac{z^{2}}{\pi}\right)\theta_{1}\left(z\tau'\vert\tau'\right).
\label{eq:theta1 modular}
\end{equation}
Putting $q=\e^{\imath\pi\tau}$ and $\tau=\imath t$ (\ref{eq:theta1 modular}) produces the form  
\begin{align}
\label{eq:1}
\sqrt{t}\sum_{n=0}^\infty\left(-1\right)^{n}&\e^{-\pi t\left(n+\frac{1}{2}\right)^{2}}\sin\left(\left(2n+1\right)z\right)\\
\nonumber
&=\e^{-\frac{z^{2}}{\pi t}}\sum_{n=0}^\infty\left(-1\right)^{n}\e^{-\frac{\pi}{t}\left(n+\frac{1}{2}\right)^{2}}\sinh\left(\left(2n+1\right)\frac{z}{t}\right).
\end{align}

\noindent 
{\bf 2. Second modular identity}:
\begin{equation}
\left(-\imath\tau\right)^{\frac{1}{2}}\theta_{2}\left(z\vert\tau\right)=\exp\left(\imath\tau'\frac{z^{2}}{\pi}\right)\theta_{4}\left(z\tau'\vert\tau'\right).
\label{eq:theta2 modular}
\end{equation}
Putting $q=\e^{\imath\pi\tau}$ and $\tau=\imath t$ (\ref{eq:theta2 modular}) produces the form 
\begin{align}
\label{eq:21}
\sqrt{t}\sum_{n\in\mathbb{Z}}\e^{-\pi t\left(n+\frac{1}{2}\right)^{2}}&\cos\left(\left(2n+1\right)z\right)
=\e^{-\frac{z^{2}}{\pi t}}\sum_{n\in\mathbb{Z}}\left(-1\right)^{n}\e^{-\frac{\pi n^{2}}{t}}
\cosh\left(2n\frac{z}{t}\right)\\
\nonumber
&\hskip2.4cm=\sum_{n\in\mathbb{Z}}\left(-1\right)^{n}\e^{-\frac{(z+\pi n)^{2}}{\pi t}}
.
\end{align}
\noindent
{\bf 3. Third modular identity}:
\begin{equation}
\left(-\imath\tau\right)^{\frac{1}{2}}\theta_{3}\left(z\vert\tau\right)=\exp\left(\imath\tau'\frac{z^{2}}{\pi}\right)\theta_{3}\left(z\tau'\vert\tau'\right).
\label{eq:theta3 modular}
\end{equation}
Putting $q=\e^{\imath\pi\tau}$ and $\tau=\imath t$ (\ref{eq:theta3 modular}) produces the form
\begin{align}
\label{eq:31}
\sqrt{t}\sum_{n\in\mathbb{Z}}\e^{-\pi n^{2}t}\cos\left(2nz\right)&=\e^{-\frac{z^{2}}{\pi t}}\sum_{n\in\mathbb{Z}}\e^{-\frac{\pi n^{2}}{t}}\cosh\left(2n\frac{z}{t}\right)\\
\nonumber
&=\sum_{n\in\mathbb{Z}}\e^{-\frac{(z+\pi n)^{2}}{\pi t}}.
 \end{align}

\noindent 
{\bf 4. Fourth modular identity}:
\begin{equation}
\left(-\imath\tau\right)^{\frac{1}{2}}\theta_{4}\left(z\vert\tau\right)=\exp\left(\imath\tau'\frac{z^{2}}{\pi}\right)\theta_{2}\left(z\tau'\vert\tau'\right).
\label{eq:theta4 modular}
\end{equation}
Putting $q=\e^{\imath\pi\tau}$ and $\tau=\imath t$ (\ref{eq:theta4 modular}) produces the form
\begin{align}
\label{eq:41}
\sqrt{t}\sum_{n\in\mathbb{Z}}\left(-1\right)^{n}\e^{-\pi n^{2}t}\cos\left(2nz\right)&=\e^{-\frac{z^{2}}{\pi t}}\sum_{n\in\mathbb{Z}}\e^{-\frac{\pi}{t}\left(n+\frac{1}{2}\right)^{2}}\cosh\left((2n+1)\frac{z}{t}\right)\\
\nonumber
&=\sum_{n\in\mathbb{Z}}\e^{-{(z+\pi( n+\frac 12))^{2}}/{\pi t}}.
\end{align}
Notice that (\ref{eq:41}) is obtained from (\ref{eq:21}) when  $t=1/s$ and $z=\imath x/s.$


\medskip

\section{Spectral representations for  Brownian motion}
\label{sec32}
\medskip

The spectral representation formulas below  may be seen as instances of the Poisson summation formula. However, we do not focus on this aspect; instead, assuming  the expressions of the transition densities obtained by the reflection principle as known, we equate these with the associated spectral representations, also called the eigen-differential expansions. For the Poisson summation approach to Jacobi theta functions, see Bellman \cite{Bellman} and also Feller \cite{Feller} p. 342. For  more examples of spectral representations for diffusions, see Karlin and Taylor  \cite{KarlinTaylor} Section 15.13 and, for the general theory,   It\^o and McKean  \cite{ItoMckean} Section 4.11 and  Dym and McKean \cite{DymMckean} Section 5.5.

Let $X=\left(X_{t}\right)_{t\geq 0}$ denote a \textit{Brownian motion living on $[-1,+1]$ and reflected at  $-1$ and $+1$}. It is well known (and can be deduced by applying the reflection principle) that for all $x,y\in [-1,+1],$
\begin{align}
\label{ee1}
\P\left\{ X_{t}\in dy\,|\, X_0=x\right\} &  =\frac{1}{\sqrt{2\pi t}}\sum_{n=-\infty}^{\infty}\left(\e^{-\frac{\left(x-y+4n\right)^{2}}{2t}}+\e^{-\frac{\left(x+y+4n+2\right)^{2}}{2t}}\right)dy
 \\
 \nonumber 
 &
 =:p_{X}\left(t;x,y\right)2dy,
\end{align}
where $\P$ stands for a probability measure in an appropriately defined probability space. Clearly, the expression on the right-hand side of (\ref{ee1}) is well defined for all $x,y\in\R$. 

 Let, secondly,  $Y=\left(Y_{t}\right)_{t\geq  0}$ denote a \textit{Brownian motion living on $(-1,+1)$ and killed at the first exit time from the interval $(-1,+1)$}. 
 Also for $Y$ we can deduce by applying the reflection principle that for all $x,y\in [-1,+1] $
\begin{align}
\label{ee11}
%
\P\left\{ Y_{t}\in dy\,|\, Y_0=x\right\}  & =\frac{1}{\sqrt{2\pi t}}\sum_{n=-\infty}^{\infty}\left(\e^{-\frac{\left(x-y+4n\right)^{2}}{2t}}-\e^{-\frac{\left(x+y+4n+2\right)^{2}}{2t}}\right)dy
 \\
 \nonumber
 &=:p_{Y}\left(t;x,y\right)2dy.
\end{align}
 Clearly, also in this case, the expression on the right-hand side of (\ref{ee11}) is well defined for all $x,y\in\R$.

\begin{theorem}
\label{t1}
The following two identities hold for all $x,y\in\R$ and $t>0$
\begin{align}
\label{RBM}
\frac{1}{2}\frac{1}{\sqrt{2\pi t}}\sum_{n=-\infty}^{\infty}&\left(\e^{-\frac{\left(x-y+4n\right)^{2}}{2t}}+\e^{-\frac{\left(x+y+4n+2\right)^{2}}{2t}}\right)\\
\nonumber
&=\frac{1}{2}\left(\frac{1}{2}+\sum_{n=1}^{\infty}\e^{-\frac{n^{2}\pi^{2}}{2\cdot 4}t}\cos\left(\frac{n\pi\left(x+1\right)}{2}\right)\cos\left(\frac{n\pi\left(y+1\right)}{2}\right)\right)
\end{align}
and 
\begin{align}
\label{KBM}
\frac{1}{2}\frac{1}{\sqrt{2\pi t}}\sum_{n=-\infty}^{\infty}&\left(\e^{-\frac{\left(x-y+4n\right)^{2}}{2t}}-\e^{-\frac{\left(x+y+4n+2\right)^{2}}{2t}}\right)\\
\nonumber
&=\frac{1}{2}\sum_{n=1}^{\infty}\e^{-\frac{n^{2}\pi^{2}}{2\cdot 4}t}\sin\left(\frac{n\pi\left(x+1\right)}{2}\right)\sin\left(\frac{n\pi\left(y+1\right)}{2}\right).
\end{align}
\end{theorem}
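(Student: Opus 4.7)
The plan is to recognise both sides of (\ref{RBM}) and (\ref{KBM}) as two representations of the same transition density of Brownian motion on $[-1,1]$: the reflection-principle representation on the LHS (already given in (\ref{ee1}) and (\ref{ee11})), and the spectral eigenfunction expansion on the RHS. This is exactly the strategy announced at the start of the section.

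First I would set up the Sturm--Liouville problem for the generator $L=\frac{1}{2}\frac{d^{2}}{dx^{2}}$ on $L^{2}([-1,1],dx)$, with Neumann boundary conditions $u'(\pm 1)=0$ in the reflected case and Dirichlet conditions $u(\pm 1)=0$ in the killed case. A direct computation identifies the complete orthogonal systems. In the reflected case, $\varphi_{0}\equiv 1$ and $\varphi_{n}(x)=\cos(n\pi(x+1)/2)$ for $n\ge 1$, with eigenvalues $\mu_{n}=-n^{2}\pi^{2}/8$ and squared norms $\|\varphi_{0}\|^{2}=2$, $\|\varphi_{n}\|^{2}=1$; in the killed case, $\psi_{n}(x)=\sin(n\pi(x+1)/2)$ for $n\ge 1$ with the same eigenvalues and $\|\psi_{n}\|^{2}=1$. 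Plugging these into the standard semigroup expansion $\sum_{n}\varphi_{n}(x)\varphi_{n}(y)\|\varphi_{n}\|^{-2}e^{\mu_{n}t}$ reproduces exactly the series on the right-hand sides of (\ref{RBM}) and (\ref{KBM}), the outer factor $\frac{1}{2}$ coming from the ``$2\,dy$'' normalisation used to define $p_{X}$ and $p_{Y}$.

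Next I would check that the reflection-principle formulas on the left really are transition densities, i.e.\ that they solve the heat equation $\partial_{t}u=\frac{1}{2}\partial_{x}^{2}u$ with initial datum $\delta_{y}$ and the prescribed boundary conditions. The PDE and the initial datum are immediate, because every summand is a shifted one-dimensional heat kernel. The boundary conditions are the interesting point and I would check them by the standard reindexing trick: at $x=1$ the substitution $n\mapsto -n-1$ in the second series of (\ref{ee1}) makes its $x$-derivative cancel the $x$-derivative of the first series (Neumann), while in (\ref{ee11}) the same substitution makes the two series cancel outright, giving $p_{Y}(t;1,y)=0$ (Dirichlet); the case $x=-1$ is symmetric. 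Uniqueness for the heat equation on $[-1,1]$ with the given initial-boundary data then forces LHS and RHS of each identity to coincide for $x,y\in[-1,1]$. Both sides are well defined and real-analytic for all $x,y\in\R$ and jointly periodic of period $4$, so the identity automatically extends from $[-1,1]^{2}$ to all of $\R^{2}$.

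The main obstacle is less a mathematical than a bookkeeping one: one has to keep straight the two different natural normalisations (the $\frac{1}{2}$ on the LHS from the $2\,dy$ convention and the $\|\varphi_{0}\|^{2}=2$ factor on the RHS), and execute the boundary-condition reindexing cleanly. As an alternative route one could bypass PDE uniqueness altogether by applying Poisson summation directly to the Gaussian sums on the LHS; after a Fourier transform of the Gaussian and a product-to-sum step using $\cos(n\pi(x+y+2)/2)=(-1)^{n}\cos(n\pi(x+y)/2)$, the desired RHS falls out after some short bookkeeping. Either route works, but the spectral/uniqueness route fits the framework announced in the opening of Section~\ref{sec32} and seems the most transparent.
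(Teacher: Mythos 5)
Your proposal is correct and takes essentially the same route as the paper: both sides of \eqref{RBM} and \eqref{KBM} are identified as the transition density of reflected (resp.\ killed) Brownian motion on $[-1,1]$, with the Neumann/Dirichlet eigenfunction expansion (same eigenfunctions, eigenvalues and normalisations) supplying the right-hand sides and periodicity/analyticity extending the identity to all of $\RR^2$. The only difference is one of completeness rather than of method: the paper takes the reflection-principle formulas as known transition densities, whereas you verify the heat equation, the boundary conditions via reindexing, and PDE uniqueness explicitly.
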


\begin{proof}
The left-hand sides of formulas (\ref{RBM}) and (\ref{KBM}) represent the transition densities $p_X(t;x,y), x\in[-1,+1],$ and $p_Y(t;x,y), x\in(-1,+1)$, respectively. That the right-hand sides represent the same functions follows from the eigen-differential expansions of these densities. To make the paper more self-contained, we provide some details of these fairly standard calculations, see Ito and McKean \cite{ItoMckean} p. 149 and Dym and McKean \cite{DymMckean} p. 176 (for the Green function).  Here we consider only the distribution $p_X$, for $p_Y$ see Bellman \cite{Bellman} p.17. It holds that
\begin{align*}
p_{X}\left(t;x,y\right) =\sum_{n=0}^{\infty}\e^{-\lambda_{n}t}u_{n}\left(x\right)u_{n}\left(y\right),   
\end{align*}
where $u_n, n=0,1,2,...,$ are the eigenfunctions associated with the corresponding eigenvalues $\lambda_n, n=0,1,2,...$, that is, $u_n$ and $\lambda_n$ satisfy the ODE
$$
\frac 12 \frac{d^2}{dx^2} u_n(x)=\lambda_n u_n(x)
$$
and the boundary conditions
\[
u_{n}'\left(-1\right)=u_{n}'\left(1\right)=0.
\] 
It is easily checked that 
\[
u_{n}\left(x\right)=c_{n}\cos\left(\frac{n\pi\left(x+1\right)}{2}\right),\thinspace\thinspace n=1,2,\dots
\]
and
 $$
 \lambda_{n}=\frac{n^{2}\pi^{2}}{8},\thinspace\thinspace n=0,1,2,\dots
 $$
form the desired solution. The normalizing constants $c_{n}$ are deduced from the orthonormality
conditions, i.e.,  the family $\{u_n : n=0,1,...\} $ should satisfy
\[
\int_{-1}^{+1} u_{n}\left(x\right)u_{m}\left(x\right)2dx=\begin{cases}
0, & n\ne m,\\
1, & n=m,
\end{cases}
\]
so that
\[
c_{n}=\begin{cases}
1/2,& n=0,\\
1/{\sqrt{2}},& n=1,2,\dots\ .
\end{cases}
\]
The fact that  (\ref{RBM}) and  (\ref{KBM}) hold true for all $x,y\in\R$ follows from periodicity of the expressions and the spatial homogeneity of Brownian motion. 
\end{proof}

The Laplace transform with respect to the time parameter $t$ of a transition density of a diffusion is called the \textit{Green function} associated with this density. For $X$ and $Y$ we introduce the Green functions
\begin{equation}
\label{greenX}
 G_{\alpha}^{X}\left(x,y\right)  :=\int_{0}^{\infty}\e^{-\alpha t}p_{X}\left(t;x,y\right)dt,\quad \alpha>0,
\end{equation}
and
\begin{equation} 
\label{greenY}
 G_{\alpha}^{Y}\left(x,y\right)  :=\int_{0}^{\infty}\e^{-\alpha t}p_{Y}\left(t;x,y\right)dt,\quad \alpha\ge 0,
\end{equation}    
respectively.  The identities in the next theorem are obtained by taking the Laplace transforms on the both sides of the identities in Theorem \ref{t1}. The identity \eqref{RBMG2} is for the Green function $ G_{\alpha}^{X}$, and  \eqref{KBMG2}  for the Green function $ G_{\alpha}^{Y}$. For the expressions on the left-hand sides, see \cite{BorodinSalminen} Appendix 1 p.125-126. Recall that the Green function is a symmetric function in $x$ and $y$; hence, the following formulas  still hold in the case $x\leq y$ after switching  $x$ and $y$  on the left-hand sides.  
\begin{theorem}
\label{t11}
The following identities hold for all  for $-1\le y\le x\le 1$ and $\alpha>0$
\begin{align}
\label{RBMG2}
&
\frac{1}{\sqrt{2\alpha}\sinh\left(2\sqrt{2\alpha}\right)}\cosh\left(\left(1-x\right)\sqrt{2\alpha}\right)\cosh\left(\left(1+y\right)\sqrt{2\alpha}\right)  
\\
\nonumber
&\hskip.5cm=\frac{1}{2}\left(\frac{1}{2\alpha}+\sum_{n=1}^{\infty}\frac{1}{\alpha+\frac{n^{2}\pi^{2}}{8}}\cos\left(\frac{n\pi\left(x+1\right)}{2}\right)\cos\left(\frac{n\pi\left(y+1\right)}{2}\right)\right)
\end{align}
and 
\begin{align}
\label{KBMG2}
 &\frac{1}{\sqrt{2\alpha}\sinh\left(2\sqrt{2\alpha}\right)}\sinh\left(\left(1-x\right)\sqrt{2\alpha}\right)\sinh\left(\left(y+1\right)\sqrt{2\alpha}\right)\\
  \nonumber 
 &\hskip1cm=\frac{1}{2}\sum_{n=1}^{\infty}\frac{1}{\alpha+\frac{n^{2}\pi^{2}}{8}}\sin\left(\frac{n\pi\left(x+1\right)}{2}\right)\sin\left(\frac{n\pi\left(y+1\right)}{2}\right).
\end{align}
\end{theorem}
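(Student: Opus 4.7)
The plan is to apply the Laplace transform in $t$ to both sides of the identities (\ref{RBM}) and (\ref{KBM}) of Theorem~\ref{t1} and check that the resulting identities are exactly (\ref{RBMG2}) and (\ref{KBMG2}).

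On the right-hand sides the Laplace transform acts term by term. Each summand factors as $\e^{-\lambda_n t}$ with $\lambda_n = n^2\pi^2/8$ times a bounded trigonometric factor in $(x,y)$, so the elementary identity $\int_0^\infty \e^{-\alpha t}\e^{-\lambda_n t}\,dt = 1/(\alpha+\lambda_n)$ together with the absolute convergence of the double series (for any $\alpha>0$) justifies the interchange of sum and integral by Fubini. This immediately produces the Mittag--Leffler series on the right of (\ref{RBMG2}) and (\ref{KBMG2}).

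The substance of the argument is on the left. The engine is the classical first-passage formula
\begin{equation*}
\int_0^\infty \e^{-\alpha t}\,\frac{1}{\sqrt{2\pi t}}\,\e^{-a^2/(2t)}\,dt \;=\; \frac{1}{\sqrt{2\alpha}}\,\e^{-|a|\sqrt{2\alpha}}.
\end{equation*}
Writing $\beta:=\sqrt{2\alpha}$ and applying this termwise (the Gaussian decay in $t$ makes the interchange painless), each of the two series of Gaussians becomes a bilateral sum of the form $\sum_{n\in\ZZ}\e^{-|A+4n|\beta}$ with, respectively, $A=x-y$ and $A=x+y+2$. Under the ordering $-1\le y\le x\le 1$ one has $A\in[0,2]$ and $A\in[0,4]$, which allows a clean split by the sign of $A+4n$; each side of the split is a standard geometric series, and summing gives the compact closed form $\cosh((2-A)\beta)/\sinh(2\beta)$. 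Getting these two sums correctly aligned and closed is the main technical step.

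Finally, the two resulting hyperbolic cosines must be combined using sum-to-product identities. The reflecting case uses $\cosh A' + \cosh B' = 2\cosh\tfrac{A'+B'}{2}\cosh\tfrac{A'-B'}{2}$, which collapses the two pieces to $2\cosh((1-x)\beta)\cosh((1+y)\beta)$ and delivers (\ref{RBMG2}) after division by $2$; the killed case uses the companion identity $\cosh A' - \cosh B' = 2\sinh\tfrac{A'+B'}{2}\sinh\tfrac{A'-B'}{2}$, yielding $2\sinh((1-x)\beta)\sinh((1+y)\beta)$ and hence (\ref{KBMG2}). The auxiliary ordering $-1\le y\le x\le 1$ is then dropped a posteriori by the symmetry $G^X_\alpha(x,y)=G^X_\alpha(y,x)$ of the Green function (and similarly for $Y$), inherited from the self-adjointness of the Brownian generator; alternatively the closed-form expressions on the left can simply be read from \cite{BorodinSalminen}, Appendix~1, as the statement itself suggests.
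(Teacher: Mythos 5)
Your proposal is correct and follows exactly the paper's route: Laplace-transform both sides of Theorem \ref{t1} in $t$, the right-hand sides termwise and the left-hand sides via the classical transform of the Gaussian kernel. The only difference is that you carry out the left-hand-side computation (geometric summation of $\sum_{n}\e^{-|A+4n|\sqrt{2\alpha}}$ plus the sum-to-product identities) explicitly, whereas the paper simply cites the closed-form Green functions from \cite{BorodinSalminen}; your calculation is accurate and reproduces those expressions.
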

\section{Modular identities via Brownian motion}
\label{sec30}
\subsection{First modular identity }
\label{sec33}\medskip 
We prove here that the first modular identity can be obtained from the spectral representation formula for killed Brownian motion.
\begin{theorem} The spectral representation formula 
\eqref{KBM} implies the first modular identity \eqref{eq:1}.
\end{theorem}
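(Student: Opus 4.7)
The plan is to specialize the two space variables and rescale the time in \eqref{KBM} so that the Fourier (right-hand) side reproduces the left-hand side of \eqref{eq:1}, while the Gaussian (left-hand) side, after completing the square, reproduces the right-hand side of \eqref{eq:1}. The correct substitution is $y=0$, $x=2z/\pi-1$, combined with the time rescaling $t\mapsto 2t/\pi$.

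With these substitutions on the right-hand side of \eqref{KBM}, the factor $\sin(n\pi(y+1)/2)=\sin(n\pi/2)$ vanishes whenever $n$ is even and equals $(-1)^{k}$ for $n=2k+1$, while $\sin(n\pi(x+1)/2)=\sin(nz)$ yields $\sin((2k+1)z)$. The rescaled exponent becomes $\e^{-(2k+1)^{2}\pi^{2}(2t/\pi)/8}=\e^{-\pi t(k+1/2)^{2}}$ (using $(2k+1)^{2}/4=(k+1/2)^{2}$), so the right-hand side of \eqref{KBM} collapses to $\tfrac{1}{2}\sum_{k\ge 0}(-1)^{k}\e^{-\pi t(k+1/2)^{2}}\sin((2k+1)z)$, i.e., $\tfrac{1}{2\sqrt{t}}$ times the left-hand side of \eqref{eq:1}.

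For the left-hand side of \eqref{KBM}, the prefactor becomes $\tfrac{1}{4\sqrt{t}}$, and completing the square in $\pi(2z/\pi-1+4n)^{2}/(4t)$ and $\pi(2z/\pi+1+4n)^{2}/(4t)$ factors out a common $\e^{-z^{2}/(\pi t)}$, leaving residual exponents $-zm/t-\pi m^{2}/(4t)$ with $m=4n-1$ in the first sum and $m=4n+1$ in the second (the latter entering with a minus sign). The two residue classes $\{4n-1\}$ and $\{4n+1\}$ partition the odd integers, so writing $m=2k+1$ and tracking signs, the contribution is $-\tfrac{1}{4\sqrt{t}}\e^{-z^{2}/(\pi t)}\sum_{k\in\mathbb{Z}}(-1)^{k}\e^{-(2k+1)z/t-\pi(k+1/2)^{2}/t}$. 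Pairing $k\ge 0$ with $k'=-k-1\le -1$, for which $(k'+1/2)^{2}=(k+1/2)^{2}$, $2k'+1=-(2k+1)$, and $(-1)^{k'}=-(-1)^{k}$, collapses this into $\tfrac{1}{2\sqrt{t}}\e^{-z^{2}/(\pi t)}\sum_{k\ge 0}(-1)^{k}\e^{-\pi(k+1/2)^{2}/t}\sinh((2k+1)z/t)$, which is $\tfrac{1}{2\sqrt{t}}$ times the right-hand side of \eqref{eq:1}. Cancelling the common prefactor $\tfrac{1}{2\sqrt{t}}$ on both sides of \eqref{KBM} yields \eqref{eq:1}.

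The only nontrivial obstacle is the mod-$4$ bookkeeping: verifying that the residue classes produced by $4n\pm 1$ together exhaust the odd integers, and that the minus sign separating the two Gaussian sums in \eqref{KBM} translates into precisely the alternating factor $(-1)^{k}$ needed in the sinh series of \eqref{eq:1}.
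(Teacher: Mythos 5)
Your proof is correct and follows essentially the same route as the paper: the substitution $y=0$, $x=2z/\pi-1$ with the time rescaling $t\mapsto 2t/\pi$ is exactly the paper's choice ($y=0$, $t=8s/\pi$, then $z=\pi(x+1)/2$, $v=4s$), and the completion of the square, the mod-$4$ merging of the two Gaussian sums into an alternating sum over odd integers, and the pairing $k\leftrightarrow -k-1$ to produce the $\sinh$ series all match the paper's computation. No gaps.
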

\begin{proof}

Putting in (\ref{KBM}) $y=0$ and $t={8s}/{\pi}$ yields the identity
\begin{align}
\label{ss1}
\sum_{n=-\infty}^{\infty}&\left(\exp\left(-\frac{\left(x+4n\right)^{2}}{16s/\pi}\right)-\exp\left(-\frac{\left(x+4n+2\right)^{2}}{16s/\pi}\right)\right)\\
&
\nonumber
={2\sqrt{s}}\sum_{n=0}^{\infty}\left(-1\right)^{n}\exp\left(-\left(2n+1
\right)^{2}\pi s\right)\sin\left(\left(2n+1\right)\frac{\pi\left(x+1\right)}{2}\right).
\end{align}
Substituting here  $z=\frac{\pi\left(x+1\right)}{2}$ and $v=4s$, the r.h.s. of (\ref{ss1}) equals the l.h.s. of (\ref{eq:1}). Hence we consider, under this substitution, the l.h.s. of (\ref{ss1}): 
\begin{align}
\label{ss2}
\sum_{n=-\infty}^{\infty}&\left(\exp\left(-\frac{\left(x+4n\right)^{2}}{16s/\pi}\right)-\exp\left(-\frac{\left(x+4n+2\right)^{2}}{16s/\pi}\right)\right)\\
&
=\sum_{n=-\infty}^{\infty}\left(\exp\left(-\frac{\left(\frac{2z}{\pi}+4n-1\right)^{2}}{4v/\pi}\right)-\exp\left(-\frac{\left(\frac{2z}{\pi}+4n+1\right)^{2}}{4v/\pi}\right)\right) 
\nonumber
\\
\nonumber
&
=\exp\left(-\frac{z^{2}}{\pi v}\right)\sum_{n=-\infty}^{\infty}\exp\left(-\frac{\left(4n-1\right)^{2}}{4v/\pi}\right)\exp\left(-\frac{{z}\left(4n-1\right)}{v}\right)\\
\nonumber 
&\hskip1cm
-\exp\left(-\frac{z^{2}}{\pi v}\right)\sum_{n=-\infty}^{\infty}\exp\left(-\frac{\left(4n+1\right)^{2}}{4v/\pi}\right)\exp\left(-\frac{z\left(4n+1\right)}{v}\right)
\\
&
\nonumber 
=\exp\left(-\frac{z^{2}}{\pi v}\right)\sum_{n=0}^{\infty}\exp{\left(-\frac{\left(n+\frac{1}{2}\right)^{2}\pi}{v}\right)}\left(-1\right)^{n}\sinh\left(\frac{\left(2n+1\right)z}{v}\right),
\end{align}
which is equal to the r.h.s of (\ref{eq:1}).
\end{proof} 

\subsection{Second modular identity }
\label{sec35}\medskip 
We prove here that, accordingly, the second modular identity can be obtained from the spectral representation formula for killed Brownian motion.
\begin{theorem}  The spectral representation formula  \eqref{KBM} implies the second modular identity \eqref{eq:21}.
\end{theorem}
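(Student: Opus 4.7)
The plan is to parallel the proof of the first modular identity but with a different specialization of \eqref{KBM} that converts the spectral sine expansion into a cosine series of $\theta_2$ type. Concretely, I will substitute $y=0$, $x=2z/\pi$, and reparameterize time by $t=2s/\pi$; the three choices are designed so that the spectral eigenvalues $n^2\pi^2 t/8$ become $(2k+1)^2\pi s/4$, the Gaussian widths on the left transform into $\pi s$, and the trigonometric arguments become $nz$.

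On the spectral (right-hand) side of \eqref{KBM}, this substitution turns each factor $\sin(n\pi(x+1)/2)\sin(n\pi(y+1)/2)$ into $\sin(n\pi/2)\sin(n\pi/2+nz)$. Expanding the second sine by the angle-addition formula gives $\sin^{2}(n\pi/2)\cos(nz)+\tfrac12\sin(n\pi)\sin(nz)$; the latter term vanishes because $\sin(n\pi)=0$, and $\sin^{2}(n\pi/2)=1$ only for odd $n=2k+1$. Hence the spectral side collapses to $\tfrac12\sum_{k\ge 0}\e^{-(2k+1)^{2}\pi s/4}\cos((2k+1)z)$, which already has the $\theta_2$ shape we want.

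On the Gaussian (left-hand) side of \eqref{KBM}, the two sums merge into a single alternating sum: the centers $4n$ and $4n+2$ together cover every even integer $2m$ with sign $(-1)^{m}$. Under the substitution, $(x+2m)^{2}/2t$ becomes $(z+\pi m)^{2}/\pi s$, and the prefactor $1/(2\sqrt{2\pi t})$ becomes $1/(4\sqrt{s})$. Equating the spectral and Gaussian expressions and clearing the factor $4\sqrt{s}$ yields
\begin{equation*}
\sum_{m\in\mathbb{Z}}(-1)^{m}\e^{-(z+\pi m)^{2}/\pi s}=2\sqrt{s}\sum_{k\ge 0}\e^{-(2k+1)^{2}\pi s/4}\cos((2k+1)z).
\end{equation*}
The right-hand side equals $\sqrt{s}\sum_{n\in\mathbb{Z}}\e^{-\pi s(n+1/2)^{2}}\cos((2n+1)z)$ by the symmetry $n\mapsto -n-1$, which is the outer equality of \eqref{eq:21}; the intermediate $\cosh$-form then follows by pairing the $m$ and $-m$ terms of the Gaussian sum. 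The only delicate step is arranging the three substitutions so that the sines collapse to cosines via the cancellation of the $\sin(n\pi)\sin(nz)$ term; once the choice $(x,y,t)=(2z/\pi,0,2s/\pi)$ is made, the rest is algebraic bookkeeping.
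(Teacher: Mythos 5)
Your proof is correct and takes essentially the same route as the paper: both substitute $y=0$ in \eqref{KBM}, rescale time, and identify $x=2z/\pi$ (equivalently $z=\pi x/2$), so that the odd-$n$ survival of $\sin(n\pi/2)$ produces the $\theta_2$-type cosine series while the image-charge sum regroups into the alternating sum $\sum_m(-1)^m\e^{-(z+\pi m)^2/\pi s}$. The only differences are cosmetic — your intermediate time variable is the paper's rescaled by a factor of $4$, and you target the shifted-Gaussian form of the right-hand side of \eqref{eq:21} first rather than the $\cosh$ form.
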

\begin{proof}
We substitute as in the proof of the first modular identity but manipulate differently. Indeed, with $y=0$ and $t=8s/\pi$  the l.h.s. of (\ref{KBM}), see (\ref{ss2}), becomes   
\begin{align*}
\frac{1}{2\sqrt{2\pi t}}\sum_{n=-\infty}^{\infty}&\left(\exp\left(-\frac{\left(x+4n\right)^{2}}{16s/\pi}\right)-\exp\left(-\frac{\left(x+4n+2\right)^{2}}{16s/\pi}\right)\right)\\
&
=\frac{\e^{-\frac{x^{2}\pi}{16s}}}{8\sqrt{s}}\left[\sum_{n=-\infty}^{\infty}\e^{-\frac{(2n)^{2}\pi}{4s}}\e^{-\frac{2nx\pi}{4s}}-\sum_{n=-\infty}^{\infty}\e^{-\frac{\left(2n+1\right)^{2}\pi}{4s}}\e^{-\frac{\left(2n+1\right)x\pi}{4s}}\right]\\
&
=\frac{\e^{-\frac{x^{2}\pi}{16s}}}{8\sqrt{s}}\Big[1+2\sum_{n=1}^{\infty}\e^{-\frac{(2n)^{2}\pi}{4s}}\cosh\left(\frac{2nx\pi}{4s}\right)\\
&\hskip3cm-2\sum_{n=1}^{\infty}\e^{-\frac{\left(2n-1\right)^{2}\pi}{4s}}\cosh\left(\frac{\left(2n-1\right)x\pi}{4s}\right)\Big]\\
&
=\frac{\e^{-\frac{x^{2}\pi}{16s}}}{8\sqrt{s}}\left[1+2\sum_{n=1}^{\infty}\e^{-\frac{n^{2}\pi}{4s}}(-1)^n\cosh\left(\frac{nx\pi}{4s}\right)\right]\\
&
=\frac {1}{8\sqrt{s}}\e^{ -\frac{x^{2}\pi}{16s}}
\sum_{n=-\infty}^{\infty}(-1)^n \e^{ -\frac{n^{2}\pi}{4s}}
\cosh\left(\frac{nx\pi}{4s}\right).
\end{align*}
Substituting similarly on the r.h.s. of \eqref{KBM}, cf. \eqref{ss1}, yields 
\begin{align*}
\frac{1}{8\sqrt{s}}
\exp\left(-\frac{x^{2}\pi}{16s}\right)
&\sum_{n=-\infty}^{\infty}\exp\left(-\frac{n^{2}\pi}{4s}\right)(-1)^n\cosh\left(\frac{nx\pi}{4s}\right)\\
= &
\frac{1}{2}
\sum_{n=0}^{\infty}\left(-1\right)^{n}\exp\left(-\left(2n+1
\right)^{2}
\pi s \right)\sin\left(\left(2n+1\right)\frac{\pi\left(x+1\right)}{2}\right).
\end{align*}
Since
$$
\sin\left(\left(2n+1\right)\frac{\pi\left(x+1\right)}{2}\right)=
(-1)^n\cos\left((2n+1)\frac{\pi x}{2}\right)
$$
it is seen that (\ref{eq:21}) holds when $z=\pi x/2$ and $t=4s$.
\end{proof}
\begin{remark} 
Putting $z=0$ in (\ref{eq:21}) yields the identity 
\begin{equation}
\label{243}
\sqrt{t}\sum_{n\in\mathbb{Z}}\e^{-\pi t\left(n+\frac{1}{2}\right)^{2}}=\sum_{n\in\mathbb{Z}}\left(-1\right)^{n}\e^{-\frac{\pi n^{2}}{t}},\ t>0,
\end{equation}
and with $t\to\frac{1}{t}$ 
\begin{equation}
\sqrt{t}\sum_{n\in\mathbb{Z}}\left(-1\right)^{n}\e^{-\pi n^{2}t}=\sum_{n\in\mathbb{Z}}\e^{-\frac{\pi}{t}\left(n+\frac{1}{2}\right)^{2}}.\label{eq:42}
\end{equation}
\end{remark} 

\subsection{Third modular identity }
\label{sec34}\medskip 
We prove here that the third modular identity can be obtained from the spectral representation formula for reflected Brownian motion,
\begin{theorem}
The spectral representation formula   \eqref{RBM} implies  the third modular identity   
\eqref{eq:31}.
\end{theorem}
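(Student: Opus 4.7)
The plan is to specialize the reflected spectral formula (\ref{RBM}) to $y=0$ with the time substitution $t=8s/\pi$, in parallel with the proofs of the first and second modular identities, and then to pass to the variables $z=\pi x/2$, $t=4s$ in order to land on (\ref{eq:31}). The reflected kernel is the natural one for $\theta_{3}$ since only the Neumann (cosine) eigenfunctions contribute at the boundary value $y=0$.

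First I would collapse the LHS. With $y=0$ and $t=8s/\pi$ the prefactor $1/(2\sqrt{2\pi t})$ reduces to $1/(8\sqrt{s})$, and the Gaussian offsets $x+4n$ and $x+(4n+2)$ together enumerate $x+2m$ as $m$ runs over $\mathbb{Z}$, so the LHS collapses to
\[
\frac{1}{8\sqrt{s}}\sum_{m\in\mathbb{Z}}\e^{-(x+2m)^{2}\pi/(16s)}.
\]

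Next, on the RHS, the factor $\cos(n\pi/2)$ kills all odd $n$, so only $n=2m$ survive. For those indices, $\cos(2m\cdot\pi/2)=(-1)^{m}$ and $\cos(m\pi(x+1))=(-1)^{m}\cos(m\pi x)$, and the two signs cancel; the exponent becomes $-4m^{2}\pi s$. Combining with the $n=0$ constant $1/4$ and pairing $\pm m$, the RHS rewrites as
\[
\frac{1}{4}\sum_{m\in\mathbb{Z}}\e^{-4m^{2}\pi s}\cos(m\pi x).
\]

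Finally I would equate the two expressions, multiply by $4$, and substitute $z=\pi x/2$, $t=4s$. Under this change the exponent $(x+2m)^{2}\pi/(16s)$ becomes $(z+m\pi)^{2}/(\pi t)$, $4m^{2}\pi s$ becomes $\pi m^{2}t$, $\cos(m\pi x)$ becomes $\cos(2mz)$, and the prefactor $1/(2\sqrt{s})$ becomes $1/\sqrt{t}$. Moving $\sqrt{t}$ to the other side gives exactly the outer equality $\sqrt{t}\sum_{n}\e^{-\pi n^{2}t}\cos(2nz)=\sum_{n}\e^{-(z+\pi n)^{2}/(\pi t)}$ of (\ref{eq:31}); the middle $\cosh$-form then follows at once by pairing $\pm n$ in the shifted-Gaussian sum. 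No step is genuinely difficult; the only care needed is the parity split on the spectral side and the bookkeeping of the final change of variables, both entirely analogous to what was done for the first and second modular identities.
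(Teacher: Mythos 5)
Your argument is correct and follows the paper's strategy of specializing the spectral identity (\ref{RBM}) at a single value of $y$ and then rescaling: the paper takes $y=-1$ (so the two Gaussian families coincide and every spectral mode survives with $\cos(n\pi(y+1)/2)=1$), whereas you take $y=0$ (so the Gaussians interleave into a single sum over even shifts and the odd modes are killed by $\cos(n\pi/2)=0$). Both specializations land on the same identity (\ref{lh3}), equivalently (\ref{eq:31}), after the final change of variables, so this is essentially the same proof.
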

\begin{proof}

Substituting $y=-1$ on  the l.h.s. of (\ref{RBM}) yields  \begin{equation}
\label{lh1}
\frac{1}{2}\frac{1}{\sqrt{2\pi t}}\sum_{n=-\infty}^{\infty}\left(\e^{-\frac{\left(x+1+4n\right)^{2}}{2t}}+\e^{-\frac{\left(x+4n+1\right)^{2}}{2t}}\right)=\frac{1}{\sqrt{2\pi t}}\sum_{n=-\infty}^{\infty}\e^{-\frac{\left(x+1+4n\right)^{2}}{2t}},
\end{equation}
and for the r.h.s 
\begin{align}
\label{lh2}
\hskip-1cm\frac{1}{4}+\frac{1}{2}\sum_{n=1}^{\infty}\e^{-\frac{n^{2}\pi^{2}}{8}t}&\cos\left(\frac{n\pi\left(x+1\right)}{2}\right)\\
&
\nonumber
\hskip1cm=\frac{1}{4}\sum_{n=-\infty}^{\infty}\e^{-\frac{n^{2}\pi^{2}}{8}t}\cos\left(\frac{n\pi\left(x+1\right)}{2}\right).
\end{align}
Substituting in (\ref{lh1}) and   (\ref{lh2}) $t={8s}/{\pi}$ and $x=\frac{4z}{\pi}-1$ ($\Leftrightarrow z=\frac{\pi(x+1)}{4}$) yields the identity
\begin{align}
\label{lh3}
\hskip-1cm\frac{1}{4\sqrt{s}}\exp{\left(-\frac{z^{2}}{\pi s}\right)}\sum_{n=-\infty}^{\infty}&\exp{\left(-\frac{\pi n^{2}}{s}\right)}\exp{\left(-\frac{2nz}{s}\right)}\\
\nonumber 
&\hskip1cm=\frac{1}{4}\sum_{n=-\infty}^{\infty}\exp{\left(-\pi n^{2}s\right)}\cos\left(2nz\right),
\end{align}
which is equivalent with (\ref{eq:31}).
\end{proof}
\begin{remark}1. Putting $z=0$ in (\ref{lh3}) gives perhaps the most familiar  Jacobi identity
\begin{equation}
\sqrt{t}\sum_{n=-\infty}^{\infty}
\e^{-\pi n^{2}t}=\sum_{n=-\infty}^{\infty}
\e^{-\frac{\pi n^{2}}{t}}.\label{eq:32}
\end{equation}
{2.} Substituting in (\ref{lh3}) $z=\pi r$ and $s=\pi v$ yields a slightly different form of (\ref{eq:31})
\begin{equation}
\label{lh10}
\frac{1}{\sqrt{\pi v}}\sum_{n=-\infty}^{\infty}\e^{-\frac{\left(r+n\right)^{2}}{v}}=\sum_{n=-\infty}^{\infty}\e^{-n^{2}\pi^{2}v}\cos\left(2n\pi r\right),
\end{equation}
cf. Biane et al. \cite{BianePitmanYor} formula (2.1). 
\end{remark}

\subsection{Fourth modular identity }
\label{sec36}\medskip 
Finally, we consider the fourth modular identity. 
\begin{theorem}
The spectral representation formula   \eqref{RBM} implies the fourth modular identity   
\eqref{eq:41}.
\end{theorem}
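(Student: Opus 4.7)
The plan is to mirror the proof of the third modular identity, substituting $y=1$ (rather than $y=-1$) into the spectral representation \eqref{RBM}. This is the natural choice because $\cos(n\pi(y+1)/2)=\cos(n\pi)=(-1)^{n}$ at $y=1$, producing precisely the alternating sign that distinguishes \eqref{eq:41} from \eqref{eq:31}.

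First, I would evaluate both sides of \eqref{RBM} at $y=1$. On the left, the two Gaussian series merge: since $x+1+4n+2=(x-1)+4(n+1)$, shifting the index in the second sum gives
$$\frac{1}{\sqrt{2\pi t}}\sum_{n\in\mathbb{Z}}\e^{-(x-1+4n)^{2}/(2t)}.$$
On the right, after pulling out the factor $(-1)^{n}$ and using the fact that $\e^{-n^{2}\pi^{2}t/8}\cos(n\pi(x+1)/2)$ is even in $n$ while $(-1)^{-n}=(-1)^{n}$, the sum symmetrizes (just as in Section 4.3) to
$$\frac{1}{4}\sum_{n\in\mathbb{Z}}(-1)^{n}\e^{-n^{2}\pi^{2}t/8}\cos\bigl(n\pi(x+1)/2\bigr).$$

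Next, I apply the same change of variables used for the third identity, $t=8s/\pi$ and $z=\pi(x+1)/4$. Then $n^{2}\pi^{2}t/8=\pi n^{2}s$ and $n\pi(x+1)/2=2nz$, so the right-hand side becomes $\tfrac14\sum_{n}(-1)^{n}\e^{-\pi n^{2}s}\cos(2nz)$. For the left-hand side, $x-1=4z/\pi-2$ gives $x-1+4n=4(z/\pi+n-\tfrac12)$, hence $(x-1+4n)^{2}/(2t)=(z+\pi(n-\tfrac12))^{2}/(\pi s)$, and the prefactor $1/\sqrt{2\pi t}$ becomes $1/(4\sqrt{s})$. A further index shift $n\to n+1$ turns $n-\tfrac12$ into $n+\tfrac12$. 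Equating the two sides and cancelling the common factor $1/4$ yields
$$\sum_{n\in\mathbb{Z}}\e^{-(z+\pi(n+1/2))^{2}/(\pi s)}=\sqrt{s}\sum_{n\in\mathbb{Z}}(-1)^{n}\e^{-\pi n^{2}s}\cos(2nz),$$
which is precisely \eqref{eq:41} with $t=s$. The intermediate $\cosh$-form in \eqref{eq:41} can then be recovered by expanding the square $(z+\pi(n+\tfrac12))^{2}=z^{2}+2z\pi(n+\tfrac12)+\pi^{2}(n+\tfrac12)^{2}$ in the left-hand sum and pairing the indices $n$ with $-n-1$.

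I do not anticipate a real obstacle: the argument is bookkeeping of exactly the same flavor as in Section 4.3. The only item requiring care is keeping track of the two successive index shifts---one to merge the reflected Gaussians in the spatial sum and one to convert $n-\tfrac12$ into $n+\tfrac12$ after the substitution---together with the verification that symmetrization of the cosine series over $\mathbb{Z}$ is compatible with the alternating sign, which holds because $(-1)^{-n}=(-1)^{n}$.
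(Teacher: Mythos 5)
Your proposal is correct and follows exactly the paper's route: substitute $y=1$ into \eqref{RBM}, symmetrize the cosine series to $\frac{1}{4}\sum_{n\in\mathbb{Z}}(-1)^{n}\e^{-n^{2}\pi^{2}t/8}\cos(n\pi(x+1)/2)$, and then apply the change of variables $t=8s/\pi$, $z=\pi(x+1)/4$. The paper states this substitution and ``skips the details''; your index shifts and the final identification with \eqref{eq:41} correctly supply those omitted manipulations.
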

\begin{proof}

Take $y=1$ in (\ref{RBM}) to obtain  
\begin{align}
\label{lh11}
\frac{1}{2}\frac{1}{\sqrt{2\pi t}}\sum_{n=-\infty}^{\infty}&\Big(\e^{-\frac{\left(x+4n-1\right)^{2}}{2t}}+\e^{-\frac{\left(x+4n+3\right)^{2}}{2t}}\Big)
\\
&
\nonumber
\hskip1cm=\frac{1}{4}\sum_{n=-\infty}^{\infty}\e^{-\frac{n^{2}\pi^{2}}{8}t}(-1)^n\cos\left(\frac{n\pi\left(x+1\right)}{2}\right).
\end{align}
Substituting here $z=\pi(x+1)/4$ and $t=8s/\pi$ leads, after some manipulations, to the claimed formula (\ref{eq:41}) -- we skip the details.
\end{proof}
\begin{remark}
\label{r44}
As pointed out earlier, (\ref{eq:41}) is also obtained from (\ref{eq:21}) when $t=1/s$ and $z=\imath x/s$.
\end{remark}

\bigskip
\section{Mittag-Leffler expansions via Brownian motion}
\label{sec41}
In this section,  using the identities in Theorem \ref{t11}, we deduce some well known series expansions of hyperbolic functions, cf . \cite{Gradshteyn} 1.421 p. 36. Traditionally such expansions are obtained by applying  Mittag-Leffler's theorem \cite[p.187]{Ahlfors}.
\begin{theorem} 
\label{MiLe}
The following expansions hold:
\begin{align}
\label{ml1}
&\coth z
=\frac 1 z + 2z\sum_{n=1}^{\infty}\frac{1}{z^{2}+n^{2}\pi^{2}}
=z\sum_{n=-\infty}^{\infty}\frac{1}{z^{2}+n^{2}\pi^{2}},\quad z\not= 0,\\
\label{ml2}
&\frac{1}{\sinh z}
=\frac{1}{z}+2z\sum_{n=1}^{\infty}\frac{\left(-1\right)^{n}}{z^{2}+n^{2}\pi^{2}}
=z\sum_{n=-\infty}^{\infty}\frac{\left(-1\right)^{n}}{z^{2}+n^{2}\pi^{2}},\quad z\not= 0,\\
\label{ml3}
&\tanh(z)=8z\sum_{n=1}^\infty \frac{1}{4z^2+(2n-1)^{2}\pi^{2}},\\
\label{ml4}
&\frac{1}{\cosh\left({z}\right)}
=
4\pi\sum_{n=0}^{\infty}\left(-1\right)^{n}\frac{2n+1}
{4 z^2+(2n+1)^2\pi^2}.
\end{align}
\end{theorem}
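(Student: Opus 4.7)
The plan is to evaluate the two Green-function spectral identities of Theorem~\ref{t11} at well-chosen boundary or midpoint values of $(x,y)$, collapse the hyperbolic side using the double-angle identity $\sinh(2u)=2\sinh(u)\cosh(u)$, and rescale $z=c\sqrt{2\alpha}$ to recover each Mittag-Leffler expansion in turn.

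Concretely, for \eqref{ml1} I would set $x=y=1$ in \eqref{RBMG2}: the value $\cosh(0)=1$ leaves $\coth(2\sqrt{2\alpha})/\sqrt{2\alpha}$ on the left and $\cos(n\pi)^2=1$ gives $\frac{1}{4\alpha}+\frac{1}{2}\sum_{n\ge 1}(\alpha+n^2\pi^2/8)^{-1}$ on the right; the substitution $z=2\sqrt{2\alpha}$ then yields \eqref{ml1}. For \eqref{ml2} I would take $x=1$, $y=-1$ in \eqref{RBMG2}: the left-hand side becomes $(\sqrt{2\alpha}\sinh(2\sqrt{2\alpha}))^{-1}$, the factor $\cos(n\pi)\cos(0)=(-1)^n$ produces the alternating series on the right, and the same rescaling $z=2\sqrt{2\alpha}$ gives \eqref{ml2}. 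For \eqref{ml3} I would set $x=y=0$ in \eqref{KBMG2}: the double-angle identity collapses the left-hand side to $\tanh(\sqrt{2\alpha})/(2\sqrt{2\alpha})$, the factor $\sin(n\pi/2)^2$ eliminates the even modes on the right, and $z=\sqrt{2\alpha}$ produces \eqref{ml3}.

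Identity \eqref{ml4} requires a factor $2k+1$ in the numerator, which has no direct counterpart in either spectral identity, so I would differentiate \eqref{KBMG2} with respect to $x$ and then evaluate at $(x,y)=(1,0)$. The left-hand side becomes $-\cosh((1-x)\sqrt{2\alpha})\sinh((1+y)\sqrt{2\alpha})/\sinh(2\sqrt{2\alpha})$, which at $(1,0)$ reduces by the double-angle identity to $-1/(2\cosh(\sqrt{2\alpha}))$. On the spectral side the differentiation brings down a factor $n\pi/2$; combining with $\cos(n\pi)\sin(n\pi/2)$ annihilates the even indices and, after collecting signs, leaves $-\frac{\pi}{4}\sum_{k\ge 0}(-1)^{k}(2k+1)/(\alpha+(2k+1)^{2}\pi^{2}/8)$. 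Equating the two sides and rescaling $z=\sqrt{2\alpha}$ produces \eqref{ml4}.

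The main technical obstacle is justifying the termwise $x$-differentiation used for \eqref{ml4}: the differentiated series has terms of order $1/k$ and converges only conditionally at the boundary point $x=1$. I would handle this by first differentiating on the open interval $x\in(-1,1)$, where the differentiated series converges uniformly on compact subsets so term-by-term differentiation is legitimate, and then passing to the limit $x\to 1^{-}$. The left-hand side is continuous up to $x=1$, while on the right-hand side the alternating structure makes Abel's theorem applicable, so the interior limit coincides with the ordinary alternating sum that appears in \eqref{ml4}.
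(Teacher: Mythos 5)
Your derivations of \eqref{ml1}, \eqref{ml2} and \eqref{ml3} coincide exactly with the paper's proof (same choices $x=y=1$ and $x=1,\,y=-1$ in \eqref{RBMG2}, $x=y=0$ in \eqref{KBMG2}, same rescalings), and the computations check out. For \eqref{ml4} you take a genuinely different route: the paper does not differentiate \eqref{KBMG2} but \emph{integrates} it over $y\in(-1,1)$, interpreting $\int_{-1}^{1}2\alpha\,G_{\alpha}^{Y}(0,y)\,dy$ as $\P\{H>T_\alpha \mid Y_0=0\}$ for an independent exponential time $T_\alpha$; the closed-form side gives $1-1/\cosh(\sqrt{2\alpha})$ and the spectral side gives $\sum_{n\ge0}(-1)^{n}\tfrac{4}{(2n+1)\pi}\cdot\tfrac{8\alpha}{8\alpha+(2n+1)^{2}\pi^{2}}$, cf.\ \eqref{cosh1}--\eqref{cosh2}; the factor $2n+1$ you felt was missing arises there from $\int_{-1}^{1}\sin\bigl(\tfrac{n\pi(y+1)}{2}\bigr)dy$ rather than from a derivative. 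The trade-off is exactly the one you identify: termwise integration needs only the uniform convergence of the original series and is immediate, whereas your boundary differentiation produces a conditionally convergent series with terms of order $1/n$ and forces the interior-differentiation-plus-limit argument. That argument can be made rigorous (Dirichlet's test gives uniform convergence of the differentiated series on compact subsets of $(-1,1)$, and an Abel-type continuity statement handles $x\to1^{-}$), but note that Abel's theorem in its textbook form is for power series; here you need its analogue for trigonometric series, or a summation-by-parts estimate, so that step should be written out rather than cited by name. Your final identity and the rescaling $z=\sqrt{2\alpha}$ are correct, so the proposal is sound; the paper's integration route simply buys you a cleaner justification at the cost of introducing the probabilistic reading of the Green function.
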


\begin{proof}
The first expansion (\ref{ml1}) results, after simple manipulations,  from  (\ref{RBMG2}) by taking $x=y=1$. Also the second one (\ref{ml2})  comes from (\ref{RBMG2})  by taking now $x=1, y=-1$. The third one (\ref{ml3}) is deduced from   (\ref{KBMG2}) 
when $x=y=0$. Finally, we prove (\ref{ml4}). For this recall that $Y$ denotes a Brownian motion killed when it exits $(-1,+1)$
and define  
 \begin{align}
\label{HH}
 H:=\inf\left\{ t:Y_{t}\protect\notin\left(-1,1\right)\right\}.
 \end{align}
Then it holds
\begin{align}
\label{exit1}
\P\left\{ H>t\,|\, Y_0=x\right\}   &=\int_{-1}^{1}p_{Y}\left(t;x,y\right)2dy.
\end{align}
For $\alpha>0,$ let $T_\alpha$ be an exponentially (with mean $1/\alpha$) distributed random variable independent of $Y$. Using the expression for $G^{Y}_\alpha$ given on the left-hand side of (\ref{KBMG2}) yields  
\begin{align}
\label{cosh1}
\P\left\{ H>T_\alpha\,|\, Y_0=0\right\}  & =\int_0^\infty dt\, \alpha\, \e^{-\alpha t} \int_{-1}^{1} dy\, 2\,p_{Y}\left(t;x,y\right)
\\
\nonumber
 & =\int_{-1}^{1}  2\alpha\, G^{Y}_\alpha(0,y)\, dy
 \\
 \nonumber
&=1-\frac 1{\cosh(\sqrt{2\alpha})}.
\end{align}
On the other hand, applying the expression for $G^{Y}_\alpha$ given on the right-hand side of (\ref{KBMG2}) produces 
\begin{align}
\label{cosh2}
\P\left\{ H>T_\alpha\,|\, Y_0=0\right\}  & =\int_{-1}^{1}  2\alpha\, G^{Y}_\alpha(0,y)\, dy
 \\
 \nonumber
&
=\sum_{n=0}^{\infty}\left(-1\right)^{n}\, \frac {4}{(2n+1)\pi}\, \frac{8\alpha}
{8\alpha+(2n+1)^2\pi^2}.
\end{align}
Identity (\ref{ml4}) now follows when equating  (\ref{cosh1}) and (\ref{cosh2}).
\end{proof}

\section{Modular identites and hitting times}
\label{51}   

Let $H$ the random variable defined in (\ref{HH}) and let $f_H$ denote its density. Using in (\ref{exit1}) the two different expressions for $p_Y$ as given in Theorem \ref{t1} yields  the modular identity stated in the next proposition. As explained in Remark \ref{conn}, this identity is  related to identity (\ref{eq:theta1 modular}). 

\begin{theorem}
\label{prop51}
The following identity holds
\begin{align}
\label{mod4}
\frac{\pi}{4}\sum_{n=-\infty}^{\infty}\left(-1\right)^{n}\left(2n+1\right)&\exp{\lp -\frac{\left(2n+1\right)^{2}\pi^{2}}{8}t\rp}\\
\nonumber
=\frac{1}{\sqrt{2\pi t^{3}}}&\sum_{n=-\infty}^{\infty}\left(-1\right)^{n}\left(2n+1\right)\exp\lp{-\frac{\left(2n+1\right)^{2}}{2t}}\rp.
\end{align}
\end{theorem}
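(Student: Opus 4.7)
The plan is to compute the density $f_H(t\,|\,Y_0=0)$ of the exit time $H$ in two different ways, by inserting into \eqref{exit1} the two representations for $p_Y$ provided by Theorem \ref{t1}, and then equating the two expressions.

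First I would use the spectral series from \eqref{KBM}. Set $x=0$ and integrate term by term over $y\in[-1,+1]$ against $2\,dy$. Only the odd modes survive: for $n=2k$ even one has $\sin(n\pi/2)=0$, while for $n=2k+1$ odd we get $\sin((2k+1)\pi/2)=(-1)^k$ together with $\int_{-1}^{1}\sin((2k+1)\pi(y+1)/2)\,dy = 4/((2k+1)\pi)$. This yields
\[
\P\{H>t\,|\,Y_0=0\}=\frac{4}{\pi}\sum_{k=0}^{\infty}\frac{(-1)^k}{2k+1}\,\e^{-(2k+1)^2\pi^2 t/8}.
\]
Differentiating in $t$ gives $f_H = \tfrac{\pi}{2}\sum_{k\ge 0}(-1)^k(2k+1)\e^{-(2k+1)^2\pi^2 t/8}$, and since the summand is invariant under $k\mapsto -k-1$, extending the sum to $k\in\ZZ$ at the cost of a factor $1/2$ produces exactly the LHS of \eqref{mod4}.

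Next I would use the reflection form of $p_Y$ in \eqref{KBM}. The cleanest route to $f_H$ is via the boundary flux identity $f_H(t\,|\,Y_0=x) = \partial_y p_Y(t;x,-1) - \partial_y p_Y(t;x,1)$, which follows from $\partial_t p_Y = \tfrac12\partial_{yy}p_Y$ and \eqref{exit1}. By the spatial symmetry of $p_Y(t;0,\cdot)$ about $0$, this reduces to $f_H(t\,|\,Y_0=0) = -2\,\partial_y p_Y(t;0,y)\big|_{y=1}$. Differentiating the Gaussian series in $y$ and evaluating at $y=1$ produces two sums: one with exponents $-(1-4n)^2/(2t)$ and coefficient $-(1-4n)/t$, another with exponents $-(4n+3)^2/(2t)$ and coefficient $+(4n+3)/t$. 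As $n$ ranges over $\ZZ$, the arguments $1-4n$ enumerate exactly the odd integers $\equiv 1\pmod 4$ while $4n+3$ enumerate those $\equiv 3\pmod 4$, so together they cover every odd integer once. Parameterizing them as $2k+1$ with $k\in\ZZ$, the coefficient collapses to $-(-1)^k(2k+1)/t$, and the outer factor $-2\cdot\tfrac{1}{2\sqrt{2\pi t}}$ absorbs the minus sign to produce the RHS of \eqref{mod4}.

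The main obstacle will be the bookkeeping in the reflection step: verifying that the two alternating sums appearing in $\partial_y p_Y(t;0,1)$ interlace cleanly over odd integers with the sign pattern $(-1)^k$ when reindexed as $2k+1$, $k\in\ZZ$. Everything else is a routine differentiation under the sum and an elementary integral of a sine.
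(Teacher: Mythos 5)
Your proposal is correct and follows essentially the same route as the paper: compute the density of the exit time $H$ from $0$ in two ways, once by integrating the spectral (sine) series in \eqref{KBM} over $[-1,1]$ and differentiating in $t$ to get the left-hand side, and once via the heat equation to reduce $f_H$ to the boundary flux $\partial_y p_Y(t;0,-1)-\partial_y p_Y(t;0,1)$ evaluated with the Gaussian reflection series to get the right-hand side. The only cosmetic difference is your use of the symmetry $p_Y(t;0,y)=p_Y(t;0,-y)$ to collapse the flux to $-2\,\partial_y p_Y(t;0,1)$, and your explicit reindexing over odd integers, which the paper leaves to the reader; both check out.
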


\begin{proof}
Consider identity (\ref{exit1}). Using the expression for  $p_Y$ as given on the right-hand side of (\ref{KBM}) produces, after some manipulations (and use of Fubini's theorem),
$$
\P\left\{ H>t\,|\, Y_0=0\right\} 
=\sum_{n=0}^{\infty}\left(-1\right)^{n}\frac{4}{\left(2n+1\right)\pi}\e^{-\frac{\left(2n+1\right)^{2}\pi^{2}}{8}t}.
$$
Consequently,  the probability density of $H$ is
\begin{align}
\label{exit3}
\hskip-.5cm f_{H}\left(t\right):=
-\frac{\partial}{\partial t}\P\left\{ H>t\,|\, Y_0=0\right\} &=\frac{\pi}{2}\sum_{n=0}^{\infty}\left(-1\right)^{n}\left(2n+1\right)\e^{-\frac{\left(2n+1\right)^{2}\pi^{2}}{8}t},
\end{align}
which coincides with the left-hand side of (\ref{mod4}). On the other hand, we also have 
\begin{align}
\label{exit2}
f_H(t)  &=-\int_{-1}^{1}\frac{\partial}{\partial t} p_{Y}\left(t;0,y\right)2dy
=-\int_{-1}^{1}\frac{\partial^2}{\partial y^2}p_{Y}\left(t;0,y\right)\, dy\\
\nonumber
&=-\frac{\partial}{\partial y}p_{Y}\left(t;0,1\right)+\frac{\partial}{\partial y}p_{Y}\left(t;0,-1\right),
\end{align}
where we have exchanged the order of differentiation and integration, and used the fact that $p_Y$ solves the heat equation 
$$ 
\frac{\partial}{\partial t} p_{Y}\left(t;x,y\right)=\frac 12\frac{\partial^2}{\partial y^2}p_{Y}\left(t;x,y\right).
$$
Applying in (\ref{exit2}) for $p_Y$ the expression on the left-hand side of (\ref{KBM}) produces the formula on the right-hand side of (\ref{mod4}). 
\end{proof}

\begin{remark}
\label{conn}
The identity (\ref{mod4}) is also a consequence of the modular identity (\ref{eq:theta1 modular}). To see this, let  
\[
\theta_{1}'\left(z\vert\tau\right):=\frac{\partial}{\partial z}\theta_{1}\left(z\vert\tau\right).
\]
Differentiating  in (\ref{eq:theta1 modular}) with respect to $z$ yields 
\[
\left(-\imath\tau\right)^{\frac{1}{2}}\theta_{1}'\left(z\vert\tau\right)=-\imath \e^{\imath\tau'\frac{z^{2}}{\pi}}\left[\imath\tau'\frac{2z}{\pi}\theta_{1}\left(z\tau'\vert\tau'\right)+\tau'\theta_{1}'\left(z\tau'\vert\tau'\right)\right].
\]
Evaluating at $z=0$ produces
\[
\left(-\imath\tau\right)^{\frac{1}{2}}\theta_{1}'\left(0\vert\tau\right)=-\imath\tau'\theta_{1}'\left(0\vert\tau'\right)=\frac{-1}{\imath\tau}\theta_{1}'\left(0\vert\tau'\right)
\]
so that
\begin{equation}
\label{new111}
\left(-\imath\tau\right)^{\frac{3}{2}}\theta_{1}'\left(0\vert\tau\right)=\theta_{1}'\left(0\vert\tau'\right),
\end{equation}
which can be seen to be equivalent with (\ref{mod4}).
\end{remark}


Next we consider  a \textit{3-dimensional Bessel process} $R=\left(R_{t}\right)_{t\ge0}$ initiated at 0. Define the first exit time from $[0,1)$  associated with $R$ via
\[
H_{1}=\inf\left\{ t:R_{t}=1\right\}. 
\]
We focus on two different ways to invert the Laplace transform of $H_1$ yielding  two different expressions for the density and the distribution function of $H_1$. This leads to  an identity which is a special case of the modular identity (\ref{eq:theta2 modular}) displayed in (\ref{eq:42}).

\begin{theorem}
\label{prop52}
The following identity holds
\begin{align}
\label{CDF}
2\sum_{n=1}^{\infty}\left(-1\right)^{n+1}\frac{n^{2}\pi^{2}}{2}\,&\exp\lp{-\frac{n^{2}\pi^{2}}{2}t}\rp\\
\nonumber
&=\frac{{2}}{t^2\sqrt{2\pi t}}\sum_{n=0}^{\infty}\,\left(\left(2n+1\right)^{2}-t\right)\,\exp\lp{-\frac{\left(2n+1\right)^{2}}{2t}}\rp.
\end{align} 
\end{theorem}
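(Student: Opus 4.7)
The plan is to compute the probability density $f_{H_1}$ of $H_1$ by inverting its Laplace transform in two different ways and then equate the two resulting expressions. Solving the generator eigenvalue equation $\frac{1}{2}u''(x)+x^{-1}u'(x)=\alpha u(x)$ for the $3$-dimensional Bessel process, under the regularity requirement at the origin and the boundary condition $u(1)=1$, gives $u(x)=\sinh(x\sqrt{2\alpha})/(x\sinh\sqrt{2\alpha})$, and letting $x\downarrow 0$ produces the classical formula
\begin{equation*}
\ex_0\bigl[\e^{-\alpha H_1}\bigr]=\frac{\sqrt{2\alpha}}{\sinh\sqrt{2\alpha}},\qquad \alpha>0.
\end{equation*}

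For the first inversion I apply the Mittag-Leffler expansion $(\ref{ml2})$ with $z=\sqrt{2\alpha}$ to rewrite
\begin{equation*}
\frac{\sqrt{2\alpha}}{\sinh\sqrt{2\alpha}}=1+4\alpha\sum_{n=1}^{\infty}\frac{(-1)^n}{2\alpha+n^2\pi^2}.
\end{equation*}
Consequently, the Laplace transform of the survival function is
\begin{equation*}
\int_0^\infty \e^{-\alpha t}\,\P\{H_1>t\}\,dt=\frac{1-\ex_0[\e^{-\alpha H_1}]}{\alpha}=2\sum_{n=1}^{\infty}\frac{(-1)^{n+1}}{\alpha+n^2\pi^2/2},
\end{equation*}
and termwise inversion yields $\P\{H_1>t\}=2\sum_{n=1}^{\infty}(-1)^{n+1}\e^{-n^2\pi^2 t/2}$. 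Differentiating in $t$ produces the left-hand side of $(\ref{CDF})$.

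For the second inversion I use the geometric expansion $1/\sinh u=2\sum_{n=0}^{\infty}\e^{-(2n+1)u}$, valid for $u>0$, to write
\begin{equation*}
\ex_0\bigl[\e^{-\alpha H_1}\bigr]=2\sum_{n=0}^{\infty}\sqrt{2\alpha}\,\e^{-(2n+1)\sqrt{2\alpha}}.
\end{equation*}
The standard inversion $\mathcal{L}^{-1}[\e^{-a\sqrt{2\alpha}}](t)=(a/\sqrt{2\pi t^3})\e^{-a^2/(2t)}$, valid for $a>0$, combined with the identity $\sqrt{2\alpha}\,\e^{-a\sqrt{2\alpha}}=-\partial_a \e^{-a\sqrt{2\alpha}}$, yields
\begin{equation*}
\mathcal{L}^{-1}\bigl[\sqrt{2\alpha}\,\e^{-a\sqrt{2\alpha}}\bigr](t)=\frac{a^2-t}{t^2\sqrt{2\pi t}}\,\e^{-a^2/(2t)}.
\end{equation*}
Summing over $n\geq 0$ with $a=2n+1$ produces the right-hand side of $(\ref{CDF})$, and equating the two expressions for $f_{H_1}(t)$ establishes the identity.

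The main technical point is justifying termwise Laplace inversion in each method, but this is not serious: both series of Laplace transforms converge uniformly on compact subsets of $\{\alpha>0\}$ (the first by the $1/n^2$ decay of the summands, the second by the exponential decay $\e^{-(2n+1)\sqrt{2\alpha}}$), and the corresponding time-domain series converge uniformly on compact subsets of $(0,\infty)$ as well, so Fubini/dominated convergence justify the interchange of summation with the Laplace integral.
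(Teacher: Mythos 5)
Your proposal is correct and follows essentially the same route as the paper: both invert the Laplace transform $\mathbb{E}_0[\e^{-\alpha H_1}]=\sqrt{2\alpha}/\sinh\sqrt{2\alpha}$ of the exit time of the $3$-dimensional Bessel process in two ways, once via the Mittag--Leffler expansion \eqref{ml2} and once via the geometric expansion of $1/\sinh$, and equate the resulting densities; your bookkeeping choices (passing through the survival function in the first inversion, and using $-\partial_a\e^{-a\sqrt{2\alpha}}$ rather than $\partial_t$ of $\tfrac{1}{\sqrt{2\pi t}}\e^{-a^2/(2t)}$ in the second) are only cosmetic variants of the paper's. One trivial slip: the density is $f_{H_1}(t)=-\partial_t\P\{H_1>t\}$, so it is the \emph{negative} of the derivative of your survival function that equals the left-hand side of \eqref{CDF}.
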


\begin{proof}
Recall \cite{BorodinSalminen} formula  (5.2.0.2) on p. 469,
\begin{equation}
\label{LT1}
\mathbb{E}\left(\e^{-\alpha H_{1}}\,|\, R_0=0\right) 
=\frac{\sqrt{2\alpha }}{\sinh\left(\sqrt{2\alpha}\right)}.
\end{equation}
To invert this Laplace transform we apply first the Mittag-Leffler expansion  (\ref{ml2}) which gives 
\begin{equation}
\nonumber 
\frac{\sqrt{2\alpha}}{\sinh\left(\sqrt{2\alpha}\right)}
=1+2\alpha\sum_{n=1}^{\infty}\frac{\left(-1\right)^{n}}{\alpha+\frac{n^{2}\pi^{2}}{2}}. 
\end{equation}
Using next well-known calculation and inversion rules for the Laplace transform, we deduce for $t>0$ the following expression for the density $f_{H_1}$ of  $H_{1}$
\begin{align}
\label{eq:B}
f_{H_1}(t)&=2\sum_{n=1}^{\infty}\left(-1\right)^{n+1}\frac{n^{2}\pi^{2}}{2}\,\e^{-\frac{n^{2}\pi^{2}}{2}t}
=\sum_{n=-\infty}^{\infty}\left(-1\right)^{n+1}\frac{n^{2}\pi^{2}}{2}\,\e^{-\frac{n^{2}\pi^{2}}{2}t},
\end{align}
 which  coincides with the formula (4.8.0.2) on p. 404  
 in \cite{BorodinSalminen} when substituting therein $\nu=1/2$.  
 Another way to invert the Laplace transform in (\ref{LT1}) is to use the series expansion 
 \begin{align*}
\frac{\sqrt{2\alpha}}{\sinh\left(\sqrt{2\alpha}\right)}
=2\sqrt{2\alpha}\,\frac{\e^{-\sqrt{2\alpha}}}{1-\e^{-2\sqrt{2\alpha}}}
&=4\alpha\,\sum_{n=0}^\infty \,\frac{1}{\sqrt{2\alpha}}\,\e^{-(2n+1)\sqrt{2\alpha}}.
\end{align*}
Again, by well-known formulas for the Laplace transform, we obtain for  $t>0$
  \begin{align}
 \label{eq:D}
f_{H_1}(t)&=
4\,\sum_{n=0}^\infty \,
\frac{\partial}{\partial t}\left(
\frac{1}{\sqrt{2\pi t}}\,
\e^{-\frac{\left(2n+1\right)^{2}}{2t}}\right)
\\
\nonumber
&
=\frac{2}{t^2\sqrt{2\pi t}}\sum_{n=0}^{\infty}\,\left(\left(2n+1\right)^{2}-t\right)\e^{-\frac{\left(2n+1\right)^{2}}{2t}},
 \end{align}
 cf. \cite{BorodinSalminen} identity (5.2.0.2) p. 469. 
The identity (\ref{CDF}) follows from (\ref{eq:B}) and (\ref{eq:D}). 
\end{proof}

\begin{remark}
\label{RMK}
From (\ref{CDF}) we may deduce (\ref{eq:42}). Indeed,  apply Fubini's theorem on the left-hand side of (\ref{CDF}) yields
\[
\mathbb{P}\left(H_{1}\ge t\,|\, R_0=0\right)=2\sum_{n=1}^{\infty}\left(-1\right)^{n+1}\e^{-\frac{n^{2}\pi^{2}}{2}t}
\]
and, consequently,
\begin{align}
\mathbb{P}\left(H_{1}\le t\,|\, R_0=0\right) & =1-2\sum_{n=1}^{\infty}\left(-1\right)^{n+1}\e^{-\frac{n^{2}\pi^{2}}{2}t}\label{eq:C}
 =1+2\sum_{n=1}^{\infty}\left(-1\right)^{n}\e^{-\frac{n^{2}\pi^{2}}{2}t}. 
\end{align}
Fubini's theorem can also be used on the right-hand side of (\ref{CDF}) when integrating up to $t$. Hence 
\begin{align}
\label{CDF2}
\mathbb{P}\left(H_{1}\le t\,|\, R_0=0\right)
&= 4\sum_{n=0}^{\infty}\,\frac{{1}}{\sqrt{2\pi t}}\,\e^{-\frac{\left(2n+1\right)^{2}}{2t}}
= 2\sum_{n=-\infty}^{\infty}\,\frac{{1}}{\sqrt{2\pi t}}\,\e^{-\frac{\left(2n+1\right)^{2}}{2t}}.
\end{align}
Comparing (\ref{eq:C}) and (\ref{CDF2}) we deduce the identity 
\begin{align}
\label{CDF3}
1+2\sum_{n=1}^{\infty}\left(-1\right)^{n}\exp\lp{-\frac{n^{2}\pi^{2}}{2}t}\rp &=2\sum_{n=-\infty}^{\infty}\,\frac{{1}}{\sqrt{2\pi t}}\,\exp\lp{-\frac{\left(2n+1\right)^{2}}{2t}}\rp
\end{align}
which is equivalent with (\ref{eq:42}). Of course, differentiating in (\ref{CDF3}) with respect to $t$ yields (\ref{CDF}). 
\end{remark}

\medskip
\section{Discrete Gaussian distributions and duality}
\label{sec4}
\subsection{Discrete Gaussian distributions}
Since the Jacobi theta functions appear naturally in the study of various Brownian motions, we take now a closer look at these functions seen as probability distributions. We begin with by introducing two discrete probability distributions on 
$\mathbb{Z}$ associated with Jacobi theta functions  $\theta_{2}$ and $\theta_{3}$.  

\begin{definition}\label{23theta}\vskip2mm
\noindent
{\bf 1.} An integer valued  random variable $X_{\theta_{2}}$ is said to follow
the discrete Gaussian $\theta_{2}$-distribution  if 
\begin{equation}
\label{2theta}
\P\left\{ X_{\theta_{2}}=n\right\} =\frac{\e^{-c\pi\left(n+\frac{1}{2}\right)^{2}}}{\theta_2(0, \e^{-c\pi})},\qquad n\in\mathbb{Z},\,\,c>0,
\end{equation}
where  the $\theta_2$-function is given in Section \ref{sec1} and $ \tau = \imath c$ is called the \textit{lattice parameter} of the $\theta_{2}$-distribution. 

\noindent
{\bf 2.}  An integer valued  random variable $X_{\theta_{3}}$ is said to have
the discrete Gaussian $\theta_{3}$-distribution  with lattice parameter $\tau = \imath c$ if 
\begin{equation}
\label{3theta}
\P\left\{ X_{\theta_{3}}=n\right\} =\frac{\e^{-c\pi n^{2}}}{\theta_{3}\left(0,\e^{-c\pi}\right)},\qquad  n\in\mathbb{Z},\,\,c>0,
\end{equation}
where  the $\theta_3$-function is given in Section \ref{sec1}. 
\end{definition}

\noindent
From these definitions we deduce immediately the following elementary results. In particular, notice that $X_{\theta_2}+\frac 12$ is a symmetric random variable around 0.

\begin{theorem}
 \label{means}The mean values of $X_{\theta_2}$ and $X_{\theta_3}$ are given by 
 \begin{equation}
 \E(X_{\theta_2})=-1/2\qquad {\text{and}} \qquad  \E(X_{\theta_3})=0.
\end{equation}
\end{theorem}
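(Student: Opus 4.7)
The statement is an elementary symmetry computation, so the plan is essentially to make the symmetries explicit and check absolute convergence so that the means are well defined.

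First I would handle $X_{\theta_3}$. The probability mass function $n\mapsto \e^{-c\pi n^2}/\theta_3(0,\e^{-c\pi})$ is invariant under $n\mapsto -n$, so $X_{\theta_3}\rlaw -X_{\theta_3}$. Since $\sum_{n\in\mathbb{Z}}|n|\e^{-c\pi n^2}<\infty$ for every $c>0$, the expectation exists and the symmetry forces $\E(X_{\theta_3})=0$.

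For $X_{\theta_2}$ the analogous observation is that $n\mapsto \e^{-c\pi(n+1/2)^2}$ is invariant under the involution $n\mapsto -n-1$, because $(-n-1+\tfrac12)^2=(n+\tfrac12)^2$. Hence $X_{\theta_2}\rlaw -1-X_{\theta_2}$, equivalently $X_{\theta_2}+\tfrac12\rlaw -(X_{\theta_2}+\tfrac12)$. Taking expectations (again justified by $\sum_n|n|\e^{-c\pi(n+1/2)^2}<\infty$) gives $\E(X_{\theta_2}+\tfrac12)=0$, i.e., $\E(X_{\theta_2})=-\tfrac12$.

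There is no real obstacle here; the only subtlety worth mentioning is the preliminary check that the normalizing constants $\theta_2(0,\e^{-c\pi})$ and $\theta_3(0,\e^{-c\pi})$ are strictly positive finite numbers (immediate from the Fourier series in Section~\ref{sec1}, since $|\e^{-c\pi}|<1$) and that the first moments converge absolutely, both of which are standard consequences of the Gaussian decay in $n$. The proof is then a two-line symmetry argument for each variable.
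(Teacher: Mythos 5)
Your argument is correct and matches the paper's intent exactly: the paper states this result without proof, remarking only that it is immediate from the definitions and that $X_{\theta_2}+\tfrac12$ is symmetric about $0$, which is precisely the symmetry you make explicit (together with the routine convergence checks). Nothing further is needed.
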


Other parametrizations of the $\theta_2$- and $\theta_3$-distributions are via elliptic integrals. To introduce these, recall the definitions of Legendre's elliptic integrals (see \cite{NIST} pp. 486-487) 
$$
K(k) :=\int_0^1\frac {dt}{\sqrt{1-t^2}\sqrt{1-k^2t^2}},\qquad E(k) :=\int_0^1\frac {\sqrt{1-k^2t^2}}{\sqrt{1-t^2}}dt
$$
with \textit{the elliptic modulus} $k\in (0,1).$ The \textit{complementary modulus} is defined by $k'=\sqrt{1-k^2}$ and a standard notation is $K'\left(k\right):=K\left(k'\right)$ and $E'\left(k\right):=E\left(k'\right)$.
\begin{lemma}
\label{BIJ}
The function 
$
f\left(k\right):={K(k')}/{K\left(k\right)},
$
is decreasing over  $k \in \left(0,1\right).$ Moreover,
$
\lim_{k\to 0}f(k)=+\infty\quad {\text{and}}\quad \lim_{k\to 1}f(k)=0.
$
\end{lemma}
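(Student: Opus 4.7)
The plan is to establish monotonicity and the boundary behavior of $K$ separately, and then combine the two to get the claim.

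First I would show that $k\mapsto K(k)$ is strictly increasing on $(0,1)$. Indeed, for fixed $t\in(0,1)$ the integrand
\[
\frac{1}{\sqrt{1-t^{2}}\,\sqrt{1-k^{2}t^{2}}}
\]
is strictly increasing in $k$, because $\partial_{k}(1-k^{2}t^{2})^{-1/2}=kt^{2}(1-k^{2}t^{2})^{-3/2}>0$. Differentiating under the integral sign (justified by the dominated convergence theorem on any compact subinterval of $(0,1)$) gives $K'(k)>0$ on $(0,1)$.

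Next, since $k\mapsto k'=\sqrt{1-k^{2}}$ is a strictly decreasing bijection of $(0,1)$ onto itself, the composition $k\mapsto K(k')$ is strictly decreasing. Consequently
\[
f(k)=\frac{K(k')}{K(k)}
\]
is the ratio of a strictly positive, strictly decreasing function by a strictly positive, strictly increasing function, and is therefore strictly decreasing on $(0,1)$.

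For the boundary values I would use the explicit values $K(0)=\pi/2$ and $K(1)=+\infty$. The first is immediate by substituting $k=0$ in the integrand, which yields $\int_{0}^{1}(1-t^{2})^{-1/2}dt=\pi/2$. The second follows from the fact that for $k=1$ the integrand equals $(1-t^{2})^{-1}$, whose integral on $(0,1)$ diverges at $t=1$; by monotone convergence, $K(k)\to+\infty$ as $k\to 1^{-}$. Combined with the continuity of $K$ on $(0,1)$ and the relation $k'\to 1^{-}$ as $k\to 0^{+}$ and $k'\to 0^{+}$ as $k\to 1^{-}$, we obtain
\[
\lim_{k\to 0^{+}}f(k)=\frac{K(1)}{K(0)}=+\infty,\qquad \lim_{k\to 1^{-}}f(k)=\frac{K(0)}{K(1)}=0,
\]
which completes the proof. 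There is no real obstacle here; the only point that requires a small amount of care is justifying differentiation under the integral sign, but this is routine because the $k$-derivative of the integrand is bounded on every compact $k$-interval contained in $(0,1)$.
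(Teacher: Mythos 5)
Your proof is correct, and it takes a genuinely more elementary route than the paper. The paper differentiates $f$ directly, using the classical formula $\frac{d}{dk}K(k)=\frac{1}{k}\bigl[\frac{E(k)}{k'^{2}}-K(k)\bigr]$ together with the Legendre relation $K(k)E(k')+K(k')E(k)=\frac{\pi}{2}+K(k)K(k')$ to conclude that $f'(k)<0$; that computation in fact yields the closed form $f'(k)=-\pi/\bigl(2kk'^{2}K(k)^{2}\bigr)$, which quantifies the rate of decrease and is the kind of identity that recurs elsewhere in the theory (e.g.\ in the duality results of Section 6). You instead observe that the integrand of $K$ is pointwise strictly increasing in $k$, so $K$ is strictly increasing, $K(k')$ is strictly decreasing, and a positive decreasing function divided by a positive increasing function is strictly decreasing — no special-function identities needed. (You could even skip differentiation under the integral sign entirely: pointwise strict monotonicity of the integrand already forces strict monotonicity of the integral.) Both proofs handle the limits identically via $K(0)=\pi/2$ and $K(1)=+\infty$. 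Your argument is shorter and self-contained; the paper's buys an explicit derivative formula at the cost of invoking the Legendre relation.
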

\begin{proof}
The proof is based on two identities. The first one expresses the derivative of the elliptic integral $K\left(k\right)$ as
\[
\frac{d}{dk}K\left(k\right)=\frac{1}{k}\left[\frac{E\left(k\right)}{k'^{2}}-K\left(k\right)\right].
\]
The second is the famous Legendre relation \cite[19.7.1]{NIST}
\begin{equation}
K\left(k\right)E\left(k'\right)+K(k')E\left(k\right)=\frac{\pi}{2}+K\left(k\right)K(k').
\label{Legendre}
\end{equation}
Additional elementary algebra shows that the derivative of $f(k)$ is indeed negative.
Moreover, the values 
\[
K\left(0\right) = \frac{\pi}{2},\quad K\left(1\right) = +\infty 
\]
produce the desired limits.
\end{proof}
\noindent
Since the change of variables 
\[
k\mapsto\frac{K(k')}{K\left(k\right)}
\]
is, by Proposition \ref{BIJ}, a bijection from $\left(0,1\right)$ to $\mathbb{R}^{+}$,
the equation 
\begin{equation}
\label{EKV}
c={K(k')}/{K\left(k\right)}
\end{equation}
has, for every $c>0,$ a unique
solution $k=k(c)\in\left(0,1\right)$. Using the standard notation for the \textit{nome} 
\[q=\e^{-\pi\frac{K(k')}{K\left(k\right)}},\]
Jacobi's identity \cite[Entry 6 p.101]{Ramanujan},
\[
\theta_{3}\left(0,\e^{-\pi\frac{K(k')}{K\left(k\right)}}\right)={\sqrt{\frac{2}{\pi}K\left(k\right)}}
\]
provides an equivalent but more natural parameterization of the discrete
Gaussian $\theta_{3}$-distribution by the elliptic modulus $k\in\left(0,1\right)$
under the form 
\begin{equation}
\label{31theta}
\P\left\{ X_{\theta_{3}}=n\right\} =\frac{1}{\sqrt{\frac{2}{\pi}K\left(k\right)}}\e^{-\pi\frac{K(k')}{K\left(k\right)}n^{2}},\thinspace\thinspace n\in\mathbb{Z}.  
\end{equation}
Similarly to the Gaussian $\theta_{3}$-distribution, a more natural parameterization
of the $\theta_2$-distribution by the elliptic modulus $k\in\left(0,1\right)$ is  deduced from Jacobi's identity as
\begin{equation}
\label{21theta}
\P\left\{ X_{\theta_{2}}=n\right\} =\frac{1}{\sqrt{\frac{2}{\pi}kK\left(k\right)}}\e^{-\pi\frac{K(k')}{K\left(k\right)}\left(n+\frac{1}{2}\right)^{2}},\thinspace\thinspace n\in\mathbb{Z}.
 \end{equation}
 
Let $\sigma^2(\theta)$ denote the variance of a discrete Gaussian $\theta$-distribution with $\theta=\theta_2$ or $\theta_3$. If needed, the parameter of the distribution is included in the notation as $\sigma^2(\theta(c))$ or $\sigma^2(\theta(k))$.    The variances of the $\theta$-distributions given in the next theorem are derived in \cite{WakhareVignat}.
 \begin{theorem}
The variance of the  ${\theta_{2}}$-distribution is given by 
\begin{equation}
\sigma^2\left({\theta_{2}}\right)=\frac{1}{\pi^{2}}E\left(k\right)K\left(k\right)\label{eq:var2}
\end{equation}
and is equivalently expressed as the Lambert series (with $q=\e^{-\pi\frac{K(k')}{K\left(k\right)}}$)
\begin{equation}
\sigma^2\left({\theta_{2}}\right)=\frac{1}{4}+2\sum_{n\ge1}\frac{\left(-1\right)^{n-1}nq^{2n}}{1-q^{2n}}=\frac{1}{4}+2\sum_{n\ge1}\frac{q^{2n}}{\left(1+q^{2n}\right)^{2}}.
\label{eq:var2_lambert}
\end{equation}
For the $\theta_3$ distribution it holds
\begin{equation}
    \sigma^2\left({\theta_{3}}\right)=\E\lp X_{\theta_{3}}^2\rp= \frac{1}{\pi^{2}}E\left(k\right)K\left(k\right)-\frac{1}{4}\theta_{4}^{4}\left(0,q\right)
    =\frac{K\left(k\right)^{2}}{\pi^{2}}\left[\frac{E\left(k\right)}{K\left(k\right)}-\left(k'\right)^{2}\right],\label{eq:var3}
\end{equation}
and, as Lambert series,
\begin{equation}
\sigma^2\left({\theta_{3}}\right)=2\sum_{n \ge 1}\frac{\left(-1\right)^{n-1}nq^{n}}{1-q^{2n}}=2\sum_{n \ge 1}\frac{q^{2n-1}}{\left(1+q^{2n-1}\right)^{2}}.
\label{eq:var3_lambert}
\end{equation}
\end{theorem}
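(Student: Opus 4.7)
The plan is to rewrite both variances as logarithmic derivatives of the theta null values with respect to the nome $q=\e^{-c\pi}$, and then translate the result into elliptic integrals using Jacobi's identities together with Legendre's relation, which has already appeared in the proof of Lemma~\ref{BIJ}.

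The first step is purely formal. Since $q^{n^{2}}=\e^{-c\pi n^{2}}$, we have $q\,\partial_{q}\theta_{3}(0,q)=\sum_{n}n^{2}q^{n^{2}}$, which after dividing by $\theta_{3}(0,q)$ gives
\begin{equation*}
\sigma^{2}(\theta_{3})\;=\;q\,\partial_{q}\log\theta_{3}(0,q).
\end{equation*}
For $\theta_{2}$, Theorem~\ref{means} tells us that $X_{\theta_{2}}+\tfrac{1}{2}$ has mean zero, so $\sigma^{2}(\theta_{2})=\E((X_{\theta_{2}}+\tfrac{1}{2})^{2})=\sum(n+\tfrac{1}{2})^{2}q^{(n+1/2)^{2}}/\theta_{2}(0,q)$, and the same differentiation trick yields
\begin{equation*}
\sigma^{2}(\theta_{2})\;=\;q\,\partial_{q}\log\theta_{2}(0,q).
\end{equation*}

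The second step uses Jacobi's identities $\theta_{3}^{2}(0,q)=2K(k)/\pi$, $\theta_{2}^{2}(0,q)=2kK(k)/\pi$ and $\theta_{4}^{2}(0,q)=2k'K(k)/\pi$, with $q=\e^{-\pi K'/K}$, to convert the above into
\begin{equation*}
\sigma^{2}(\theta_{3})=\frac{1}{2K}\,q\,\partial_{q}K,\qquad \sigma^{2}(\theta_{2})=\frac{1}{2K}\,q\,\partial_{q}K+\frac{1}{2k}\,q\,\partial_{q}k.
\end{equation*}
Thus everything reduces to computing $q\,\partial_{q}K$ and $q\,\partial_{q}k$. For this, I would differentiate $-\log q=\pi K'/K$ with respect to $k$, insert the expression for $dK/dk$ recalled in the proof of Lemma~\ref{BIJ}, compute the analogous $dK'/dk$ via $dk'/dk=-k/k'$, and clear the $KK'$ cross terms using Legendre's relation \eqref{Legendre}. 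The outcome I expect is
\begin{equation*}
\frac{dq}{dk}=\frac{\pi^{2}q}{2k(k')^{2}K^{2}},\qquad q\,\partial_{q}K=\frac{2K^{2}\bigl(E-(k')^{2}K\bigr)}{\pi^{2}},\qquad \frac{q}{k}\,\partial_{q}k=\frac{2(k')^{2}K^{2}}{\pi^{2}}.
\end{equation*}
Substituting into the two displayed formulas immediately gives $\sigma^{2}(\theta_{2})=EK/\pi^{2}$ (the two $(k')^{2}K^{2}$ contributions cancel) and $\sigma^{2}(\theta_{3})=EK/\pi^{2}-(k')^{2}K^{2}/\pi^{2}$; using $\theta_{4}^{4}(0,q)=4(k')^{2}K^{2}/\pi^{2}$ rewrites the latter in the stated form.

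For the Lambert series, I would take $\log$ of the infinite product representations of $\theta_{2}$ and $\theta_{3}$ displayed in Section~\ref{sec1} and apply $q\,\partial_{q}$ term by term; expanding each $\frac{q^{k}}{1\pm q^{k}}$ as a geometric series and reindexing by the products $n(2m+1)$ produces the alternating Lambert forms stated, while the rewriting with $(1\pm q^{2n})^{2}$ in the denominator is the classical identity obtained by summing by parts (equivalently, recognising $\sum_{m\geq 0}(2m+1)q^{(2m+1)n}$ as $q^{n}/(1-q^{n})^{2}$ after suitable manipulations).

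The main obstacle is step three: the explicit evaluation of $q\,\partial_{q}K$. It forces one to carry the complementary modulus $k'$ through the differentiation, and the miraculous cancellation that produces a \emph{simple} rational function of $K$, $E$, $k$, $k'$ relies entirely on Legendre's relation — the same mechanism already exploited in the proof of Lemma~\ref{BIJ}. Once this computation is in hand the rest is bookkeeping.
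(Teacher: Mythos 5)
Your derivation is correct, but there is nothing in the paper to compare it against: the authors do not prove this theorem, they simply cite \cite{WakhareVignat} for the variance formulas. Your route is the natural self-contained one and every step checks out. Writing $\sigma^2(\theta_3)=q\,\partial_q\log\theta_3(0,q)$ and, via the symmetry of $X_{\theta_2}+\tfrac12$ from Theorem \ref{means}, $\sigma^2(\theta_2)=q\,\partial_q\log\theta_2(0,q)$ is exact; the theta-null identities $\theta_3^2(0,q)=\tfrac{2}{\pi}K$, $\theta_2^2(0,q)=\tfrac{2}{\pi}kK$, $\theta_4^2(0,q)=\tfrac{2}{\pi}k'K$ reduce everything to $q\,\partial_qK$ and $q\,\partial_qk$; and differentiating $-\log q=\pi K'/K$ does give $\frac{dq}{dk}=\frac{\pi^2 q}{2k(k')^2K^2}$, because the combination $KE'+K'E-KK'$ appearing in the numerator collapses to $\pi/2$ by Legendre's relation \eqref{Legendre}. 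Your stated values $q\,\partial_qK=\frac{2K^2(E-(k')^2K)}{\pi^2}$ and $\frac{q}{k}\,\partial_qk=\frac{2(k')^2K^2}{\pi^2}$ are both right, and they yield \eqref{eq:var2} and \eqref{eq:var3} exactly as you describe, with $\tfrac14\theta_4^4(0,q)=(k')^2K^2/\pi^2$. The Lambert series also come out correctly from $q\,\partial_q\log$ of the triple products in Section \ref{sec1} (the $\tfrac14$ in \eqref{eq:var2_lambert} is the contribution of the prefactor $2q^{1/4}$ of $\theta_2$), and I confirmed both \eqref{eq:var2_lambert} and \eqref{eq:var3_lambert} against the direct $q$-expansions through order $q^6$. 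The only step you leave implicit that deserves to be written out if this is to serve as an actual proof is the divisor-sum reindexing converting $-\sum_{n}\frac{2nq^{2n}}{1-q^{2n}}+2\sum_{n}\frac{(2n-1)q^{2n-1}}{1+q^{2n-1}}$ into $2\sum_{n}\frac{(-1)^{n-1}nq^{n}}{1-q^{2n}}$; it is routine but not entirely one line. What your approach buys over the paper's bare citation is a complete argument using only ingredients already on the page: the derivative of $K$, Legendre's relation from the proof of Lemma \ref{BIJ}, Jacobi's theta-null identities, and the infinite product representations.
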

The cumulants $\kappa_n(\theta)$ of the theta distributions were computed in \cite{WakhareVignat} and also appear as (20.6.7) and (20.6.8) in \cite{NIST}. They are as follows with, in \eqref{cumulants_theta_2}, the Euler polynomials $E_n\left(x\right)$ defined by the generating function
\[
\sum_{n=0}^{\infty}
\frac{E_n\left(x\right)}{n!}z^n = \frac{2e^{xz}}{1+e^{z}},\,\,\vert z \vert < \pi.
\]
\begin{theorem}
The even-indexed cumulants of the $\theta_2$ and $\theta_3$ distributions have the Lambert series representations
\begin{equation}
\kappa_{2n}\left({\theta_{2}\left(c\right)}\right)=-\frac{1}{2}E_{2k-1}\left(0\right)+\sum_{m\ge1}\left(-1\right)^{m+1}m^{2n-1}\frac{\e^{-cm\pi}}{\sinh\left(cm\pi\right)},\thinspace\thinspace n\ge1,
\label{cumulants_theta_2}
\end{equation}
and
\begin{equation}
\kappa_{2n}\left({\theta_{3}\left(c\right)}\right)=\sum_{m\ge1}\frac{\left(-1\right)^{m+1}m^{2n-1}}{\sinh\left(cm\pi\right)},\thinspace\thinspace n\ge1,
\label{cumulants_theta_3}
\end{equation} 
and the Eisenstein series representations
\begin{align}
\kappa_{2n}\left({\theta_{2}\left(c\right)}\right) & =\frac{\left(-1\right)^{n+1}}{\pi^{2n}}\left(2n-1\right)!\sum_{n_{1},n_{2}\in\mathbb{Z}}\frac{1}{\left(\left(2n_{1}-1\right)+\imath c\left(2n_{2}\right)\right)^{2n}}
\label{cumulants_theta_2_Eisenstein}
\end{align}
and
\begin{align}
\kappa_{2n}\left({\theta_{3}\left(c\right)}\right) & =\frac{\left(-1\right)^{n+1}}{\pi^{2n}}\left(2n-1\right)!\sum_{n_{1},n_{2}\in\mathbb{Z}}\frac{1}{\left(\left(2n_{1}-1\right)+\imath c\left(2n_{2}-1\right)\right)^{2n}}.
\label{cumulants_theta_3_Eisenstein}
\end{align}
The odd-indexed cumulants satisfy
\begin{equation}
\kappa_{2n+1}\left({\theta_{2}}\right)=
\begin{cases}
1/2,& n=0,\\ 
0,& n\ge 1,
\end{cases}
\end{equation}
and 
\begin{equation}
\kappa_{2n+1}\left({\theta_{3}}\right)=0,\thinspace\thinspace n\ge0.
\end{equation}
\end{theorem}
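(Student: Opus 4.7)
The plan is to read off the characteristic functions of both distributions as ratios of theta functions and then exploit two complementary expansions of these ratios: the infinite product representation for the Lambert series, and a sum-over-zeros (Hadamard factorization) argument for the Eisenstein series. First, the Fourier-series definitions give, with $q=\e^{-c\pi}$,
$$\E\bigl[\e^{\imath w X_{\theta_{3}}}\bigr]=\frac{\theta_{3}(w/2,q)}{\theta_{3}(0,q)},\qquad \E\bigl[\e^{\imath w X_{\theta_{2}}}\bigr]=\e^{-\imath w/2}\,\frac{\theta_{2}(w/2,q)}{\theta_{2}(0,q)}.$$
The odd cumulants then follow at once: $\P\{X_{\theta_{3}}=n\}$ is invariant under $n\mapsto -n$, so $X_{\theta_{3}}$ is symmetric about $0$ and $\kappa_{2n+1}(\theta_{3})=0$ for all $n\geq 0$; similarly $X_{\theta_{2}}+\tfrac{1}{2}$ is symmetric about $0$, so $\kappa_{1}(\theta_{2})=-1/2$ (coming from the $\e^{-\imath w/2}$ prefactor) and $\kappa_{2n+1}(\theta_{2})=0$ for $n\geq 1$.

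For the Lambert series I would take logarithms in the infinite-product formulas from Section~\ref{sec1}. Writing
$$\log\frac{\theta_{3}(w/2,q)}{\theta_{3}(0,q)}=\sum_{n\geq 1}\log\frac{(1+q^{2n-1}\e^{\imath w})(1+q^{2n-1}\e^{-\imath w})}{(1+q^{2n-1})^{2}},$$
expanding each $\log(1+\cdot)$ in powers of $q^{2n-1}$, swapping the two sums, and using $\sum_{n\geq 1}q^{j(2n-1)}=1/(2\sinh(cj\pi))$ collapses the cumulant generating function to $\sum_{j\geq 1}\frac{(-1)^{j-1}}{j\sinh(cj\pi)}(\cos(jw)-1)$. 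Matching the $w^{2n}$-coefficient against $\sum_{k}\kappa_{k}(\imath w)^{k}/k!$ then yields \eqref{cumulants_theta_3}. The same manipulation for $\theta_{2}$ uses $q^{2n}$ in place of $q^{2n-1}$, giving $\sum_{n\geq 1}q^{2jn}=\tfrac{1}{2}\e^{-cj\pi}/\sinh(cj\pi)$ and hence the main sum in \eqref{cumulants_theta_2}. The extra $\cos(z)$ factor in the $\theta_{2}$ product contributes an additional $\log\cos(w/2)$; using $\cos z=\prod_{k\geq 0}(1-z^{2}/((k+\tfrac{1}{2})\pi)^{2})$, the value $\zeta(2n)=(-1)^{n+1}(2\pi)^{2n}B_{2n}/(2(2n)!)$, and the identity $E_{2n-1}(0)=-2(2^{2n}-1)B_{2n}/(2n)$ read from $\tanh(t/2)=1-2/(\e^{t}+1)$, this piece contributes exactly $-\tfrac{1}{2}E_{2n-1}(0)$ to $\kappa_{2n}(\theta_{2})$.

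For the Eisenstein series I would view $\phi(w)$ as an entire function of order two with $\phi(0)=1$ and simple zeros at the zero-lattice of $\theta_{j}(w/2,q)$. A Hadamard factorization (equivalently, applying Mittag-Leffler to the meromorphic function $(\log\phi)^{(2n)}$, whose only singularities are poles of order $2n$ with explicit principal parts at the zeros $w_{0}$) gives, for $2n\geq 4$,
$$[w^{2n}]\log\phi(w)=-\frac{1}{2n}\sum_{w_{0}}\frac{1}{w_{0}^{2n}},$$
so $\kappa_{2n}=(-1)^{n}(2n)!\,[w^{2n}]\log\phi=(-1)^{n+1}(2n-1)!\sum_{w_{0}}w_{0}^{-2n}$. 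The zeros of $\theta_{3}(\cdot\,,\e^{-c\pi})$ located at $(m+\tfrac{1}{2})\pi+(n+\tfrac{1}{2})\imath c\pi$ translate, via $z=w/2$, to $w_{0}=(2n_{1}-1)\pi+\imath c(2n_{2}-1)\pi$, giving \eqref{cumulants_theta_3_Eisenstein}; the zeros of $\theta_{2}(\cdot\,,\e^{-c\pi})$ at $(m+\tfrac{1}{2})\pi+n\imath c\pi$ translate to $w_{0}=(2n_{1}-1)\pi+2n_{2}\imath c\pi$, giving \eqref{cumulants_theta_2_Eisenstein}.

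The hard part will be the weight-two ($n=1$) Eisenstein sum, which converges only conditionally and requires a prescribed summation order; a workaround is to establish this case by combining the already-proved Lambert form with the Mittag-Leffler expansion $1/\sinh(cm\pi)=(cm/\pi)\sum_{k\in\mathbb{Z}}(-1)^{k}/(c^{2}m^{2}+k^{2})$ coming from \eqref{ml2}, which turns the single Lambert sum into the required double lattice sum directly. For $n\geq 2$ absolute convergence makes the sum-over-zeros argument routine, so the only remaining bookkeeping is the Fubini-type interchange of the double sums in the Lambert derivation, which is legitimate because $|q|<1$.
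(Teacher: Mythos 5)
The paper itself offers no proof of this theorem: it quotes the formulas from \cite{WakhareVignat} and \cite{NIST}, and the closest it comes to a derivation is the later remark obtaining the Lambert form \eqref{cumulants_theta_3} by applying cumulant additivity to the Bernoulli decomposition \eqref{eq:theta3} and summing Eulerian-polynomial expressions, together with Remark \ref{oddcumulant} for the odd cumulants. Your Lambert-series argument is the analytic face of exactly that computation (taking $\log$ of the triple product is the same as adding the cumulants of the Bernoulli factors), and it checks out: $\sum_{n\ge1}q^{j(2n-1)}=1/(2\sinh(cj\pi))$, $\sum_{n\ge1}q^{2jn}=\e^{-cj\pi}/(2\sinh(cj\pi))$, and the identification of the $\log\cos(w/2)$ piece with $-\tfrac12E_{2n-1}(0)$ via $\tfrac{d}{dz}\log\cosh(z/2)=-\tfrac12\sum_{n\ge1}E_n(0)z^n/n!$ are all correct; the interchange of sums is justified by $|q|<1$. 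Your symmetry argument for the odd cumulants matches the paper's. (Your value $\kappa_1(\theta_2)=-1/2$ agrees with Theorem \ref{means}, $\E(X_{\theta_2})=-1/2$; the $+1/2$ in the statement appears to be a sign slip in the paper.) The Eisenstein part via Hadamard factorization over the theta zero-lattice is something the paper does not attempt at all, and for $n\ge2$ your argument is sound: the zeros are correctly located, $\sum_{w_0}|w_0|^{-2n}$ converges absolutely, and neither the genus-two correction factors $\exp(w/w_0+w^2/(2w_0^2))$ nor the quadratic polynomial in the exponent affects $[w^{2n}]\log\phi$ once $2n\ge3$.

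The genuine gap is the $n=1$ Eisenstein case, which you correctly flag as hard but for which your proposed workaround does not deliver. Inserting $1/\sinh(cm\pi)=(cm/\pi)\sum_{k\in\mathbb{Z}}(-1)^k/(c^2m^2+k^2)$ into the Lambert form of $\kappa_2$ produces a double sum with terms $(-1)^{m+k+1}m^2/(c^2m^2+k^2)$ — first-order real denominators indexed by all integers $k$ — whereas \eqref{cumulants_theta_3_Eisenstein} at $n=1$ requires the second-order complex terms $\bigl((2n_1-1)+\imath c(2n_2-1)\bigr)^{-2}$ over odd-odd lattice points; no reindexing turns one into the other. A route that does close this case is to perform the inner sum of the Eisenstein series in closed form first (the Eisenstein summation convention), $\sum_{a\ \mathrm{odd}}(a+\imath cb)^{-2}=\tfrac{\pi^2}{4}\,\mathrm{sech}^2(\pi cb/2)$, which reduces \eqref{cumulants_theta_3_Eisenstein} at $n=1$ to $\tfrac12\sum_{b\ge1,\ b\ \mathrm{odd}}\mathrm{sech}^2(\pi cb/2)$, and this matches the Lambert form \eqref{eq:var3_lambert} of the variance; the analogous computation is needed for \eqref{cumulants_theta_2_Eisenstein} at $n=1$. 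Until that (or an equivalent Abel-summation argument pinning down the quadratic term in the Hadamard exponent) is supplied, the weight-two Eisenstein identities remain unproved in your write-up.
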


\begin{remark}
 \label{oddcumulant}
 Since $X_{\theta_2}+\frac 12$ is symmetric, the fact that $\kappa_{2n+1}\left({\theta_{2}}\right)=0$ for $n\ge 1$ can also be deduced from the translation invariance of the cumulants.  
\end{remark}

While the variance of a discrete Gaussian distribution seems difficult to evaluate using 
\eqref{eq:var2}, 
\eqref{eq:var2_lambert},
\eqref{eq:var3} and 
\eqref{eq:var3_lambert},
closed-form expressions exist in some particular cases. These expressions appeal
to the theory of \textit{singular values} of the elliptic integral, i.e. values $k_{r}$ of the elliptic modulus that satisfy the equation
\[
\frac{K\left(k_{r}'\right)}{K\left(k_{r}\right)}=\sqrt{r},
\]
where $r$ is a rational number. Ramanujan computed the explicit values of
$102$ of these elliptic moduli called \textit{singular moduli}, see \cite{Borwein}.
 
Selberg and Chowla \cite{Selberg} showed that the values $K\left(k_{r}\right)$
can be expressed in terms of a finite number
of the Gamma function (see Table \ref{table1} for some examples). The elliptic alpha function $\alpha\left(r\right),$ that takes simple algebraic values (see Table \ref{table1}) such as $\alpha\left(1\right)=1,\,\,\alpha\left(2\right)=\sqrt{2}-1 \dots$
is then used to relate the values of  $E\left(k_{r}\right)$
to the values of $K\left(k_{r}\right)$ according to
\[
\alpha\left(r\right)=\frac{\pi}{4K\left(k_{r}\right)^{2}}+\sqrt{r}-\sqrt{r}\frac{E\left(k_{r}\right)}{K\left(k_{r}\right)}.
\]
In terms of the elliptic alpha function $\alpha\left(r\right)$ and the complete elliptic integrals $K\left(k_{r}\right)$, the variance \eqref{eq:var3} of the $\theta_{3}$ distribution
 reads
 \begin{equation}
 \sigma^{2}\left(\theta_{3}\right)
 \label{eq:sigmaXk}
 =\frac{1}{4\pi\sqrt{r}}+\frac{K^{2}\left(k_{r}\right)}{\pi^{2}}\left(k_{r}^{2}-\frac{\alpha\left(r\right)}{\sqrt{r}}\right)
 \nonumber 
 \end{equation}
and the variance \eqref{eq:var2} of the $\theta_{2}$ distribution reads
 \begin{equation}
 \sigma^{2}\left(\theta_{2}\right)
 =\frac{1}{4\pi\sqrt{r}}+\frac{K^{2}\left(k_{r}\right)}{\pi^{2}}\left(1-\frac{\alpha\left(r\right)}{\sqrt{r}}\right).
 \nonumber 
 \end{equation}
The explicit values of 
the variances
$\sigma^{2}\left(\theta_{2}\right)$ and $\sigma^{2}\left(\theta_{3}\right)$ for several values
of the parameter $r$ are given in Table \ref{table2} and Table \ref{table3}, respectively, in the Appendix.

\subsection{Stochastic representations for the $\theta_{2}$- and $\theta_{3}$-random variables}
This subsection introduces stochastic representations for the $\theta_{2}$ and $\theta_{3}$ random variables as series of independent Bernoulli random variables.
\begin{theorem}
With $c>0,$ consider two independent sequences of independent Bernoulli random
variables $\left\{ Z_{1}^{-},Z_{2}^{-},\dots\right\} $ and $\left\{ Z_{1}^{+},Z_{2}^{+},\dots\right\} $
such that, for $n \ge 1,$
\[
\P\left\{ Z_{n}^{+}=+\frac{1}{2}\right\} =\P\left\{ Z_{n}^{-}=-\frac{1}{2}\right\} =\frac{\e^{\left(n-\frac{1}{2}\right)\pi c}}{2\cosh\left(\left(n-\frac{1}{2}\right)\pi c\right)}
\]
and
\[
\P\left\{ Z_{n}^{+}=-\frac{1}{2}\right\} =\P\left\{ Z_{n}^{-}=+\frac{1}{2}\right\} =\frac{\e^{-\left(n-\frac{1}{2}\right)\pi c}}{2\cosh\left(\left(n-\frac{1}{2}\right)\pi c\right)}.
\]
Then 
\begin{equation}
X_{\theta_{3}\left(c\right)}\rlaw\sum_{n\ge1}\lp Z_{n}^{+}+Z_{n}^{-}\rp,\label{eq:theta3}
\end{equation}
where the $\rlaw$ sign indicates equality in distribution.
Consider, moreover, two independent sequences of independent Bernoulli random
variables $\left\{ Y_{0}^{-},Y_{1}^{-},\dots\right\} $ and $\left\{ Y_{0}^{+},Y_{1}^{+},\dots\right\} $
such that, for $n \ge 0$
\[
\P\left\{ Y_{n}^{+}=+\frac{1}{2}\right\} =\P\left\{ Y_{n}^{-}=-\frac{1}{2}\right\} =\frac{\e^{n\pi c}}{2\cosh\left(n\pi c\right)}
\]
and
\[
\P\left\{ Y_{n}^{+}=-\frac{1}{2}\right\} =\P\left\{ Y_{n}^{-}=+\frac{1}{2}\right\} =\frac{\e^{-n\pi c}}{2\cosh\left(n\pi c\right)}.
\]
Then 
\begin{equation}
X_{\theta_{2}\left(c\right)}\rlaw-\frac{1}{2}+Y_{0}^{+}+\sum_{n\ge1}\lp Y_{n}^{+}+Y_{n}^{-}\rp.\label{eq:theta2}
\end{equation}
\end{theorem}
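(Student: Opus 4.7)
The plan is to prove both identities by computing and matching characteristic functions, using the Jacobi triple products recalled in Section~\ref{sec1} to express the characteristic function of each discrete Gaussian as a convergent infinite product whose $n$-th factor matches the characteristic function of one Bernoulli block. With $q=\e^{-c\pi}$, the definitions \eqref{2theta} and \eqref{3theta} give directly
\begin{equation*}
\E\left[\e^{\imath t X_{\theta_3(c)}}\right]=\frac{\theta_3(t/2,q)}{\theta_3(0,q)}\quad\text{ and }\quad\E\left[\e^{\imath t X_{\theta_2(c)}}\right]=\e^{-\imath t/2}\,\frac{\theta_2(t/2,q)}{\theta_2(0,q)},
\end{equation*}
the factor $\e^{-\imath t/2}$ in the $\theta_2$-case arising from the index shift $n\mapsto n+1/2$. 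Substituting the Jacobi triple products and using the elementary identity $\e^{-\imath t/2}\cos(t/2)=(1+\e^{-\imath t})/2$ recasts these as
\begin{equation*}
\prod_{n\ge1}\frac{1+2q^{2n-1}\cos t+q^{4n-2}}{(1+q^{2n-1})^{2}}\quad\text{ and }\quad\frac{1+\e^{-\imath t}}{2}\prod_{n\ge1}\frac{1+2q^{2n}\cos t+q^{4n}}{(1+q^{2n})^{2}}.
\end{equation*}

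Next I would compute the characteristic function of each Bernoulli block. Setting $a_n=(n-1/2)\pi c$, a direct calculation using independence of $Z_n^+$ and $Z_n^-$ and the prescribed probabilities gives
\begin{equation*}
\E\left[\e^{\imath t(Z_n^++Z_n^-)}\right]=\frac{\cos t+\cosh(2a_n)}{2\cosh^{2}(a_n)}.
\end{equation*}
Using $\e^{\pm 2a_n}=q^{\mp(2n-1)}$ and multiplying numerator and denominator by $2q^{2n-1}$ identifies this with the $n$-th factor $(1+2q^{2n-1}\cos t+q^{4n-2})/(1+q^{2n-1})^{2}$ of the $\theta_3$-product; taking the product over $n\ge1$ reproduces $\E[\e^{\imath t X_{\theta_3(c)}}]$ and establishes \eqref{eq:theta3}. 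The same computation for $Y_n^++Y_n^-$ with $b_n=n\pi c$, $n\ge 1$, produces $(1+2q^{2n}\cos t+q^{4n})/(1+q^{2n})^{2}$. For $n=0$ the defining probabilities collapse to $1/2$ each, so $Y_0^+$ is uniform on $\{\pm 1/2\}$ and
\begin{equation*}
\E\left[\e^{\imath t(-1/2+Y_0^+)}\right]=\e^{-\imath t/2}\cos(t/2)=\frac{1+\e^{-\imath t}}{2},
\end{equation*}
which matches the prefactor above; multiplying all contributions reproduces $\E[\e^{\imath t X_{\theta_2(c)}}]$ and establishes \eqref{eq:theta2}.

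The only remaining technicality is the convergence of the Bernoulli series. An easy computation shows that the variance of $Z_n^++Z_n^-$ equals $1/(2\cosh^{2}(a_n))=O(q^{2n-1})$, and similarly that of $Y_n^++Y_n^-$ is $O(q^{2n})$; since $|q|<1$ both series are summable, so Kolmogorov's three-series theorem guarantees almost-sure convergence of the Bernoulli series, and the characteristic-function identity for partial sums passes to the limit by bounded convergence. The main obstacle is purely algebraic: one must recognize that the elementary characteristic function $(\cos t+\cosh(2a_n))/(2\cosh^{2}(a_n))$ of a sum of two symmetrically weighted Bernoulli variables coincides, after multiplication by $2q^{2n-1}$ top and bottom, with the $n$-th cosine-factor in the Jacobi triple product for $\theta_3$ (and the parallel matching for $\theta_2$, after absorbing the $\cos(t/2)$ prefactor into the $n=0$ block). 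Once that factor-by-factor correspondence is spotted, the conclusion follows from uniqueness of characteristic functions.
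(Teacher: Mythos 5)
Your proof is correct and follows essentially the same route as the paper: both identify the normalized theta function, via Jacobi's triple product, with an infinite product whose $n$-th factor is the moment generating / characteristic function of the $n$-th Bernoulli block. The only cosmetic difference is that you work with the $q$-product form of the factors rather than the equivalent $\cos\left(\left(n-\tfrac12\right)\pi\tau\pm z\right)$ form cited from NIST, and you additionally supply the convergence argument that the paper leaves implicit.
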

\begin{proof}
These expressions are deduced from the infinite product representations for the normalized $\theta_2$- and $\theta_3$-functions that appear as formulas (20.5.7) and (20.5.6) in \cite{NIST},
respectively
\[
\frac{\theta_{3}\left(z,\tau\right)}{\theta_{3}\left(0,\tau\right)}=\prod_{n\ge1}\frac{\cos\left(\left(n-\frac{1}{2}\right)\pi\tau+z\right)\cos\left(\left(n-\frac{1}{2}\right)\pi\tau-z\right)}{\cos^{2}\left(\left(n-\frac{1}{2}\right)\pi\tau\right)}
\]
and
\[
\frac{\theta_{2}\left(z,\tau\right)}{\theta_{2}\left(0,\tau\right)}=\cos z\prod_{n\ge1}\frac{\cos\left(n\pi\tau+z\right)\cos\left(n\pi\tau-z\right)}{\cos^{2}\left(n\pi\tau\right)}
\]
with the lattice parameter $\tau=\imath c.$ These identities are direct consequences of Jacobi's triple product identity. Identifying with the moment generating functions
for $X_{\theta_{2}\left(c\right)}$ and $X_{\theta_{3}\left(c\right)},$
respectively, that is, 
\begin{equation}
\label{MOM1}
 \mathbb{E}\lp e^{zX_{\theta_{2}\left(c\right)}}\rp =\e^{-\frac{z}{2}}\frac{\theta_{2}\left(\frac{z}{2\imath},\tau\right)}{\theta_{2}\left(0,\tau\right)},\thinspace\thinspace{\text{ and}} \ \ \mathbb{E}\lp\e^{zX_{\theta_{3}\left(c\right)}}\rp=\frac{\theta_{3}\left(\frac{z}{2\imath},\tau\right)}{\theta_{3}\left(0,\tau\right)},   
\end{equation}
produces the result.
\end{proof}
We conclude this subsection with two remarks.
\begin{remark}
In \cite{Kemp}, a $\theta_3$-distributed random variable is characterized as
the difference of two independent Heine distributed random variables:
\[
X_{\theta_3(c)} = X_{A} - X_{B}
\]
with
\[
\P\left({X_{A}}=i\right) = \P\left({X_{B}}=i\right) = \P\left({X_{A}}=0\right) \frac{q^{i^2}}{\left(q^2;q^2\right)_i},\,\,i=0,1,2\dots
\]
and with the q-Pochhammer symbol $\left(a;q\right)_{i} = \prod_{k=0}^{i-1}\left(1-aq^k\right).$
These Heine random variables $X_{A}$  and $X_{B}$ 
can now be identified as 
\[
X_{A}=\sum_{n\ge1}Z_{n}^{+},\thinspace\thinspace
X_{B}=-\sum_{n\ge1}Z_{n}^{-}
\]
 in the stochastic representation \eqref{eq:theta3}.
\end{remark}
\begin{remark}
As an application of the stochastic representation \eqref{eq:theta3}, the cumulants of $X_{\theta_{3}}$ can be computed as follows:
\begin{align*}
\kappa_{2m}\left(\theta_{3}\left(c\right) \right) = \sum_{n\ge 1}\kappa_{2m}\left(Z_{n}^{+} + Z_{n}^{-}\right) =  \sum_{n\ge 1}\kappa_{2m}\left(Z_{n}^{+}\right)
+ \sum_{n\ge 1}\kappa_{2m}\left(Z_{n}^{-}\right),
\end{align*}
where we have used the additive property of the cumulants. Explicit computation of the elementary cumulants $\kappa_{2m}\left(Z_{n}^{+}\right)$ and $\kappa_{2m}\left(Z_{n}^{-}\right)$ and elementary algebra produces
\begin{align*}
\kappa_{2m}\left({\theta_{3}\left(c\right)}\right) &  =2\sum_{n\ge1}\left(-q^{2n-1}\right)\frac{A_{2m-1}\left(-q^{2n-1}\right)}{\left(1+q^{2n-1}\right)^{2m}},
\end{align*}
with $A_m\left(x\right)$ the Eulerian polynomials that
satisfy the identity 
\[
\frac{xA_{m-1}\left(x\right)}{\left(1-x\right)^{m}}=\sum_{n\ge1}n^{m-1}x^{n}
\]
so that, with $x=-q^{2n-1}$ and after simple algebra, we find
\begin{align*}
\kappa_{2m}\left({\theta_{3}\left(c\right)}\right) &  =2\sum_{n\ge1}\left(-1\right)^{n}n^{2m-1}\frac{q^{n}}{1-q^{2n}}=\sum_{n\ge1}\left(-1\right)^{n-1}\frac{n^{2m-1}}{\sinh\left(cn\pi\right)},\thinspace\thinspace m\ge1,
\end{align*}
that coincides with \eqref{cumulants_theta_3}.
The same approach using \eqref{eq:theta2} produces the cumulants \eqref{cumulants_theta_2}.
\end{remark}

\subsection{Duality}
In this subsection, we study the consequences of some modular properties of the Jacobi $\theta$-functions on the corresponding $\theta$-distributed random variables. 
We consider first the $\theta_3$-distribution. Our starting point is the third modular identity as given in (\ref{lh10}) with $v=c/\pi$, i.e.  
\begin{equation}
\label{lh100}
\frac{1}{\sqrt{c}}\sum_{n=-\infty}^{\infty}\e^{-\frac{\pi\left(r+n\right)^{2}}{c}}=\sum_{n=-\infty}^{\infty}\e^{-c\pi n^{2}}\cos\left(2n\pi r\right).
\end{equation}
Expanding both sides of (\ref{lh100}) in MacLaurin's series in $r$  and comparing the coefficients yields the following duality result. We denote with $\mu_n(\theta)$ the $m$th centered moment of a $\theta$-distribution. 
\begin{theorem}
\label{DU1}
Consider two  $\theta_3$-distributed random variables $X_{k}$ and $X_{k'}$ with respective elliptic moduli $k$ and $k'=\sqrt{1-k^2}$
or, equivalently, with respective lattice parameters $\imath c$ and $\frac{\imath}{c}$
defined by \eqref{EKV}.  Then
\begin{align}
\label{REC}
\frac{\mu_{2m}(\theta_3(k))}{K(k)^{2m}}&- (-1)^m\frac{\mu_{2m}(\theta_3(k'))}{K(k')^{2m}}\\
\nonumber
&=\sum_{l=1}^m \frac{(2m)!}{l!\,(2m-2l))!}\frac{(-1)^{m-l}}{(4\pi K(k)K(k'))^l}
\frac{\mu_{2m-2l}(\theta_3(k'))}{K(k')^{2(m-l)}}.
\end{align}
\end{theorem}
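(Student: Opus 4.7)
The plan is to carry out exactly what the statement advertises: expand both sides of the modular identity (\ref{lh100}) as Maclaurin series in $r$ and match the coefficients of $r^{2m}$, converting the resulting trigonometric moment sums into centered moments of the two dual $\theta_3$-distributions via the normalization constants $\theta_3(0,e^{-c\pi})$ and $\theta_3(0,e^{-\pi/c})$.

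First I would handle the right-hand side: expanding $\cos(2n\pi r)=\sum_{m\ge 0}(-1)^m(2n\pi r)^{2m}/(2m)!$ and interchanging summations gives
\[
[r^{2m}]\text{ RHS}=\frac{(-1)^m(2\pi)^{2m}}{(2m)!}\sum_n n^{2m}e^{-c\pi n^2}=\frac{(-1)^m(2\pi)^{2m}}{(2m)!}\,\theta_3(0,e^{-c\pi})\,\mu_{2m}(\theta_3(c)),
\]
since $\E(X_{\theta_3})=0$ by Theorem \ref{means}, so centered and raw moments coincide.

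Next I would treat the left-hand side by factoring $e^{-\pi r^2/c}$ out of each summand, writing $e^{-\pi(r+n)^2/c}=e^{-\pi r^2/c}e^{-\pi n^2/c}e^{-2\pi n r/c}$, then expanding $e^{-2\pi n r/c}$ in powers of $r$. By the $n\mapsto -n$ symmetry odd powers of $n$ drop out, and the even-power sums are recognized as
\[
\sum_n n^{2j}e^{-\pi n^2/c}=\theta_3(0,e^{-\pi/c})\,\mu_{2j}(\theta_3(c')),\qquad c':=1/c.
\]
Multiplying by the expansion of $e^{-\pi r^2/c}=\sum_{l\ge 0}(-\pi/c)^l r^{2l}/l!$ and picking up $[r^{2m}]$ yields a Cauchy product over $l+j=m$. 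Using Jacobi's normalization $\theta_3(0,e^{-c\pi})=\sqrt{2K(k)/\pi}$ together with $c=K(k')/K(k)$, the $1/\sqrt{c}$ factor of (\ref{lh100}) at $r=0$ gives $\theta_3(0,e^{-\pi/c})/\sqrt{c}=\theta_3(0,e^{-c\pi})$, which cancels when I equate coefficients.

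The remaining step is pure bookkeeping of powers of $c$ and $\pi$. Dividing through by $c^{2m}$ and then by $K(k)^{2m}$, and using the crucial simplification $4\pi c K(k)^2=4\pi K(k)K(k')$, produces
\[
\frac{\mu_{2m}(\theta_3(k))}{K(k)^{2m}}=\sum_{l=0}^{m}\frac{(2m)!}{l!\,(2m-2l)!}\frac{(-1)^{m-l}}{(4\pi K(k)K(k'))^l}\,\frac{\mu_{2m-2l}(\theta_3(k'))}{K(k')^{2(m-l)}},
\]
and isolating the $l=0$ term, which is exactly $(-1)^m\mu_{2m}(\theta_3(k'))/K(k')^{2m}$, yields (\ref{REC}). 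The main technical obstacle is keeping track of the exponents of $c$ and telescoping them through the relation $cK(k)=K(k')$; nothing else is genuinely subtle, but a small slip in the arithmetic of the powers would change the denominator of the inner sum and break the recognition of the symmetric factor $4\pi K(k)K(k')$.
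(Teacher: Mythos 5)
Your proposal is correct and follows exactly the route the paper indicates: the paper's entire "proof" is the one-sentence remark that expanding both sides of \eqref{lh100} in Maclaurin series in $r$ and comparing coefficients yields \eqref{REC}, and your computation (factoring out $\e^{-\pi r^2/c}$, using the $n\mapsto-n$ symmetry, cancelling $\theta_3(0,\e^{-\pi/c})/\sqrt{c}=\theta_3(0,\e^{-c\pi})$, and converting powers of $c$ via $cK(k)=K(k')$ into the factor $(4\pi K(k)K(k'))^{-l}$) correctly supplies the details that the paper omits.
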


\begin{corollary}
\label{DU2}
Let $m=1$ in (\ref{REC}). Then
\begin{align}
\label{REC1}
\frac{\mu_{2}(\theta_3(k))}{K(k)^{2}}+
\frac{\mu_{2}(\theta_3(k'))}{K(k')^{2}}
&=\frac{1}{2\pi K(k)K(k')}.
\end{align}
\end{corollary}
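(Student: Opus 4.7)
The plan is to simply specialize the recurrence (\ref{REC}) of Theorem \ref{DU1} to $m=1$ and read off the claim. This is in principle a routine substitution, so the main task is to verify that the combinatorial and sign factors collapse to the stated right-hand side.

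First, for $m=1$ the left-hand side of (\ref{REC}) becomes
\[
\frac{\mu_{2}(\theta_3(k))}{K(k)^{2}}-(-1)^1\,\frac{\mu_{2}(\theta_3(k'))}{K(k')^{2}}
=\frac{\mu_{2}(\theta_3(k))}{K(k)^{2}}+\frac{\mu_{2}(\theta_3(k'))}{K(k')^{2}},
\]
which is exactly the left-hand side of (\ref{REC1}).

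Next, for $m=1$ the sum on the right-hand side of (\ref{REC}) has only one term, namely $l=1$. That term equals
\[
\frac{(2\cdot 1)!}{1!\,(2\cdot 1-2\cdot 1)!}\,\frac{(-1)^{1-1}}{\bigl(4\pi K(k)K(k')\bigr)^{1}}\,\frac{\mu_{2\cdot 1-2\cdot 1}(\theta_3(k'))}{K(k')^{2(1-1)}}
=\frac{2!}{1!\,0!}\cdot\frac{1}{4\pi K(k)K(k')}\cdot\mu_0(\theta_3(k')).
\]
Since $\mu_0$ of any probability distribution is $1$, and $2!/(1!\,0!)=2$, the right-hand side collapses to
\[
\frac{2}{4\pi K(k)K(k')}=\frac{1}{2\pi K(k)K(k')},
\]
matching the right-hand side of (\ref{REC1}).

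There is no genuine obstacle here: the only points that require a sanity check are (i) the sign $-(-1)^m = +1$ at $m=1$, which makes the left-hand side a sum rather than a difference, and (ii) the fact that when $l=m$ the factor $\mu_{2m-2l}/K(k')^{2(m-l)}$ degenerates to $\mu_0 = 1$, so that the right-hand side does not involve any unknown moment of $\theta_3(k')$. Both are immediate, and the identity follows.
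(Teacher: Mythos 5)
Your substitution is correct: at $m=1$ the sign $-(-1)^m$ makes the left side a sum, the right side reduces to the single term $l=1$ with $\mu_0=1$, and $2!/(1!\,0!)\cdot(4\pi K(k)K(k'))^{-1}=1/(2\pi K(k)K(k'))$. This is exactly the (implicit) argument of the paper, which treats the corollary as an immediate specialization of Theorem \ref{DU1}.
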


\begin{remark}
 Identity (\ref{REC1}) is given in \cite[(17)]{WakhareVignat} p. 300. Therein it is deduced from (\ref{eq:var3}) via Legendre's identity \eqref{Legendre}. Notice also that (\ref{REC1}) can be expressed using the lattice parameter $\imath c=\imath\frac{K(k')}{K\left(k\right)}$ as
\begin{equation}
 c\,\sigma^2\lp\theta_3(c)\rp +\frac{1}{c}\sigma^2\lp\theta_3({1}/{c})\rp=\frac{1}{2\pi}.
\end{equation}
\end{remark}

\begin{corollary}
\label{DU3}
Let $c=1$ (or equivalently $k={1}/{\sqrt{2}}$) in (\ref{REC}). Then
\begin{align}
\label{REC2}
\lp 1-(-1)^m\rp& \frac{\mu_{2m}(\theta_3(k))}{K(k)^{2m}}\\
\nonumber 
&=\sum_{l=1}^m \frac{(2m)!}{l!\,(2(m-l))!}\frac{(-1)^{m-l}}{(4\pi K(k)^2)^l}
\frac{\mu_{2m-2l}(\theta_3(k))}{K(k)^{2(m-l)}}.
\end{align}
\end{corollary}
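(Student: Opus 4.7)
The plan is to derive Corollary \ref{DU3} as a direct specialization of identity (\ref{REC}) from Theorem \ref{DU1} to the self-dual point $k = k'$, so no new computation is required beyond substitution.

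First I would verify the equivalence $c = 1 \Leftrightarrow k = 1/\sqrt{2}$. By Lemma \ref{BIJ}, the map $k \mapsto K(k')/K(k)$ is a strictly decreasing bijection from $(0,1)$ onto $\mathbb{R}^+$, so $c = 1$ corresponds to the unique solution of $K(k') = K(k)$. Since $K$ itself is strictly monotone on $(0,1)$, this forces $k = k'$; combined with the constraint $k^2 + (k')^2 = 1$, one obtains $k = k' = 1/\sqrt{2}$.

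Next I would substitute $k' = k$ directly into (\ref{REC}). The two terms on the left-hand side merge to give
\[
(1 - (-1)^m)\,\frac{\mu_{2m}(\theta_3(k))}{K(k)^{2m}},
\]
while on the right-hand side the product $K(k)K(k')$ becomes $K(k)^2$ and each ratio $\mu_{2m-2l}(\theta_3(k'))/K(k')^{2(m-l)}$ collapses to $\mu_{2m-2l}(\theta_3(k))/K(k)^{2(m-l)}$. This reproduces identity (\ref{REC2}) term by term.

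There is no real obstacle here; the interest lies in reading off the content of the identity according to the parity of $m$. For even $m$ the prefactor $1 - (-1)^m$ vanishes, so (\ref{REC2}) degenerates into a nontrivial linear relation among the lower centered moments $\mu_0, \mu_2, \dots, \mu_{2m-2}$ of the self-dual $\theta_3$-distribution. For odd $m$ the prefactor equals $2$, and (\ref{REC2}) gives an explicit recursion expressing $\mu_{2m}$ in terms of the preceding centered moments, which, combined with the Selberg--Chowla evaluations of $K(1/\sqrt{2})$ discussed earlier, yields closed-form expressions in terms of values of $\Gamma$.
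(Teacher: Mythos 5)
Your proposal is correct and matches the paper's (implicit) argument: Corollary \ref{DU3} is indeed just the substitution $k=k'=1/\sqrt{2}$ (equivalently $c=1$, justified by Lemma \ref{BIJ}) into identity (\ref{REC}), with the two left-hand terms merging into the factor $1-(-1)^m$ and $K(k)K(k')$ becoming $K(k)^2$ on the right. The parity discussion at the end is a reasonable added observation but is not needed for the proof itself.
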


\begin{corollary}
\label{DU4}
Let $c=1$ and $m=1$ in (\ref{REC}). Then (see first row in Table \ref{table3})
\begin{align}
\label{REC3}
\sigma^2(\theta_3(1))
= \frac 1{4\pi}.
\end{align}
\end{corollary}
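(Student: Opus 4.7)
The plan is to specialize the identity in Corollary \ref{DU3} to $m=1$ (equivalently, specialize Corollary \ref{DU2} to the self-dual point $k=k'=1/\sqrt{2}$, which corresponds to $c=1$ via \eqref{EKV}). At $c=1$ one has $K(k)=K(k')$, so the two terms on the left-hand side of \eqref{REC1} coincide.

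More concretely, I would first observe that for $m=1$ the sum on the right-hand side of \eqref{REC2} collapses to the single term $l=1$, giving
\begin{align*}
\lp 1-(-1)\rp\frac{\mu_{2}(\theta_3(k))}{K(k)^{2}}
&=\frac{2!}{1!\,0!}\cdot\frac{(-1)^{0}}{4\pi K(k)^{2}}\cdot\frac{\mu_{0}(\theta_3(k))}{K(k)^{0}}.
\end{align*}
Since $\mu_{0}=1$, this simplifies to $2\,\mu_{2}(\theta_3(k))/K(k)^{2}=1/(2\pi K(k)^{2})$, and the factors of $K(k)^{2}$ cancel to yield $\mu_{2}(\theta_3(k))=\sigma^{2}(\theta_{3}(1))=1/(4\pi)$, as claimed.

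There is essentially no obstacle here: the result is a clean substitution, and the only thing to check is that $c=1$ really corresponds to $k=1/\sqrt{2}$ (so that $k'=k$ and $K(k')=K(k)$), which follows from the bijectivity established in Lemma \ref{BIJ} together with the classical identity $K(1/\sqrt{2})=K'(1/\sqrt{2})$. Consequently the centered second moment $\mu_{2}$ equals the variance, and the identity reduces directly to the stated value $1/(4\pi)$, matching the entry of Table \ref{table3} referenced in the statement.
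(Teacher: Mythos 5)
Your proof is correct and is exactly the computation the paper intends: the corollary is obtained by direct substitution of $m=1$ into \eqref{REC2} (equivalently, evaluating \eqref{REC1} at the self-dual point $k=k'=1/\sqrt{2}$, i.e.\ $c=1$), and your verification that $c=1$ corresponds to $k=1/\sqrt{2}$ via Lemma \ref{BIJ} together with $\mu_2=\sigma^2$ (since $\E(X_{\theta_3})=0$) completes the argument.
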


\begin{theorem}
\label{PR0}
Let $X_{\theta_2}$ and $X_{\theta_3}$ be parameterized by the elliptic modulus $k$. Then 
 \begin{align}
\label{ODD}
\E\lp (-1)^{X_{\theta_2}}\rp =0\qquad {\text{and}}\qquad 
\E\lp (-1)^{X_{\theta_3}}\rp =\sqrt{ k'}.
\end{align}
\end{theorem}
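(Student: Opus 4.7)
The plan is to compute each expectation directly from the definitions and identify the result with a classical Jacobi-theta identity.

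For the first formula, I would start from
\begin{equation*}
\E\lp (-1)^{X_{\theta_2}}\rp = \frac{1}{\theta_2(0,q)}\sum_{n\in\mathbb{Z}}(-1)^n q^{(n+\frac12)^2},\qquad q=\e^{-c\pi},
\end{equation*}
and observe that the substitution $n\mapsto -n-1$ preserves the exponent $(n+\tfrac12)^2$ but sends $(-1)^n$ to $-(-1)^n$. Hence the numerator equals minus itself and vanishes. Equivalently, this is an immediate consequence of the symmetry of $X_{\theta_2}+\tfrac12$ around $0$ recorded in Theorem~\ref{means} and the remark following it: $\P\{X_{\theta_2}=n\}=\P\{X_{\theta_2}=-n-1\}$, so $\sum_n (-1)^n\P\{X_{\theta_2}=n\}=-\sum_n(-1)^n\P\{X_{\theta_2}=n\}$.

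For the second formula, a direct computation gives
\begin{equation*}
\E\lp (-1)^{X_{\theta_3}}\rp = \frac{1}{\theta_3(0,q)}\sum_{n\in\mathbb{Z}}(-1)^n q^{n^2}=\frac{\theta_4(0,q)}{\theta_3(0,q)},
\end{equation*}
using the Fourier series definition of $\theta_4$ recalled at the beginning of Section~\ref{sec1}. It then remains to identify this ratio with $\sqrt{k'}$. The paper already uses Jacobi's identity $\theta_3(0,q)=\sqrt{\frac{2}{\pi}K(k)}$ and (implicitly, via the parametrization \eqref{21theta}) its companion $\theta_2(0,q)=\sqrt{\frac{2kK(k)}{\pi}}$; the analogous identity
\begin{equation*}
\theta_4(0,q)=\sqrt{\frac{2k'K(k)}{\pi}},
\end{equation*}
which is a standard consequence of Jacobi's triple product (and is equivalent to the classical relation $k'^{2}=\theta_4^{4}(0,q)/\theta_3^{4}(0,q)$), then yields $\theta_4(0,q)/\theta_3(0,q)=\sqrt{k'}$, as claimed.

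The only nontrivial point is the invocation of the $\theta_4$-version of Jacobi's identity; everything else is a short symmetry argument or a one-line manipulation of the Fourier series. Since this identity is classical and of the same nature as those already used in Section~\ref{sec4}, I would simply cite it from \cite{NIST} or \cite{Ramanujan} rather than reprove it.
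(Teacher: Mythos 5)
Your proof is correct, and the first half ($\E((-1)^{X_{\theta_2}})=0$ via the symmetry $n\mapsto -n-1$) is exactly what the paper means by ``immediate from (\ref{2theta})''. For the $\theta_3$ part, however, you take a genuinely different route. The paper specializes its modular identity (\ref{lh100}) at $r=\tfrac12$, recognizes the resulting left-hand side as $\theta_2(0,\e^{-\pi/c})$, i.e.\ a $\theta_2$-normalization at the \emph{dual} lattice parameter $1/c$ (hence at the complementary modulus $k'$), and then cancels using the parametrizations (\ref{31theta}) and (\ref{21theta}) together with $c=K(k')/K(k)$. You instead stay at a single nome: you identify the alternating sum as $\theta_4(0,q)/\theta_3(0,q)$ and invoke the classical relation $k'=\theta_4^2(0,q)/\theta_3^2(0,q)$ (equivalently $\theta_4(0,q)=\sqrt{\tfrac{2}{\pi}k'K(k)}$). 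Your argument is shorter and avoids the modular transformation entirely, at the cost of importing one additional classical identity that the paper does not otherwise use; the paper's version is longer but deliberately routes the computation through the modular identity it has just derived from Brownian motion, which is the point of the section. Both are valid; if you adopt your version you should cite the $\theta_4$ identity precisely (e.g.\ from \cite{NIST} or \cite{Whittaker}), since it is the one nontrivial input.
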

\begin{proof}
 The claim concerning the $\theta_2$-distribution is immediate from (\ref{2theta}). For the second statement  we use the modular identity (\ref{lh100}). Indeed, putting therein $r=\frac{1}{2}$ yields
$$
 \frac{1}{\sqrt{c}}\sum_{n=-\infty}^{\infty}\e^{-{\pi\left(n+\frac 12\right)^{2}}/{c}}=\sum_{n=-\infty}^{\infty}(-1)^n\e^{-c\pi n^{2}}.
$$
Using the normalizations in (\ref{31theta}) and (\ref{21theta}) this identity is equivalent to 
$$
\E\lp (-1)^{X_{\theta_3}}\rp\,\sqrt{\frac2\pi K(k)}
= \frac{1}{\sqrt{c}}\sqrt{\frac2\pi k'K(k')},
$$
from which, evoking (\ref{EKV}), the claim follows.  
\end{proof}
\noindent
As a consequence of Theorem \ref{PR0}, we have
\begin{corollary}
\label{OE}
Let  $X_{\theta_2}$ and $X_{\theta_3}$ be as in Theorem \ref{PR0}. Then  
$$
 \P\lp X_{\theta_2} {\text { is odd }}\rp=1/2\qquad {\text{and}}\qquad 
\P\lp X_{\theta_3} {\text { is odd }} \rp = (1-\sqrt{ k'})/2.
 $$
\end{corollary}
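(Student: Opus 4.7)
The plan is to extract both probabilities directly from Theorem \ref{PR0} by rewriting the expectation of $(-1)^X$ in terms of the probability that $X$ is odd. Concretely, for any integer-valued random variable $X$, I decompose the indicator as $(-1)^X = \indic_{\{X \text{ even}\}} - \indic_{\{X \text{ odd}\}}$, so that
\[
\E\lp (-1)^X \rp = \P(X \text{ even}) - \P(X \text{ odd}) = 1 - 2\,\P(X \text{ odd}),
\]
which inverts to
\[
\P(X \text{ odd}) = \frac{1 - \E\lp(-1)^X\rp}{2}.
\]

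Applying this identity to $X_{\theta_2}$ and using the first equality in \eqref{ODD}, namely $\E((-1)^{X_{\theta_2}})=0$, immediately gives $\P(X_{\theta_2} \text{ odd}) = 1/2$. Applying it to $X_{\theta_3}$ and using the second equality in \eqref{ODD}, namely $\E((-1)^{X_{\theta_3}}) = \sqrt{k'}$, yields $\P(X_{\theta_3} \text{ odd}) = (1-\sqrt{k'})/2$.

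There is essentially no obstacle here: the work has already been done in Theorem \ref{PR0} via the third modular identity evaluated at $r=1/2$, and the present corollary is just the translation of a signed sum into a probability. The only thing worth noting is that the series defining $\E((-1)^{X_{\theta_3}})$ converges absolutely (since $c>0$), so the term-by-term identification of even and odd indices is legitimate, and the normalizations in \eqref{31theta} and \eqref{21theta} ensure that the resulting quantity is indeed a probability.
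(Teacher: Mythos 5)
Your proof is correct and follows exactly the route the paper intends: the corollary is stated as an immediate consequence of Theorem \ref{PR0}, and the conversion $\P(X\text{ odd})=\bigl(1-\E((-1)^{X})\bigr)/2$ is precisely the (unstated) step the authors have in mind. Nothing further is needed.
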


\begin{theorem}
\label{PR1}
Consider two  $\theta_2$-distributed random variables $X_{k}$ and $X_{k'}$ with respective elliptic moduli $k$ and $k'=\sqrt{1-k^2}$
(equivalently with respective lattice parameters $\imath c$ and $\frac{1}{\imath c}$
defined by (\ref{EKV})).
Then
\begin{align}
\label{REC2}
\frac{\sqrt{k}}{K(k)}{\E\Big((-1)^{X_{k}}\,X_{k}\Big) }
=
\frac{\sqrt{k'}}{K(k')}{\E\lp (-1)^{ X_{k'}}\,
X_{k'}\rp }.
\end{align} 
 \end{theorem}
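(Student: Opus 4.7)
My plan is to reduce \eqref{REC2} to the derivative form \eqref{new111} of the first modular identity (established in Remark \ref{conn}). The first step is to rewrite the expectation. From \eqref{2theta},
\[
\E\bigl((-1)^{X_{\theta_2}(c)}\,X_{\theta_2}(c)\bigr) = \frac{1}{\theta_2(0,e^{-c\pi})}\sum_{n\in\mathbb{Z}} n(-1)^n e^{-c\pi(n+\frac{1}{2})^2}.
\]
Sending $n\mapsto -n-1$ in the negative-index terms preserves $(n+\tfrac{1}{2})^2$, flips the sign of $(-1)^n$, and turns $n$ into $-n-1$. Pairing terms yields
\[
\sum_{n\in\mathbb{Z}} n(-1)^n e^{-c\pi(n+\frac{1}{2})^2} = \sum_{n\ge 0}(2n+1)(-1)^n e^{-c\pi(n+\frac{1}{2})^2} = \tfrac{1}{2}\,\theta_1'(0,e^{-c\pi}),
\]
the last equality coming from term-by-term differentiation of the series for $\theta_1$ in Section \ref{sec1}. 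Hence
\[
\E\bigl((-1)^{X_{\theta_2}(c)}\,X_{\theta_2}(c)\bigr) = \frac{\theta_1'(0,e^{-c\pi})}{2\,\theta_2(0,e^{-c\pi})}.
\]

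Next, I would invoke the Jacobi normalization $\theta_2(0,e^{-c\pi})=\sqrt{2kK(k)/\pi}$ (already used in \eqref{21theta}), with $c=K(k')/K(k)$, to obtain
\[
\frac{\sqrt{k}}{K(k)}\,\E\bigl((-1)^{X_k}\,X_k\bigr) = \sqrt{\frac{\pi}{2}}\,\frac{\theta_1'(0,e^{-c\pi})}{2\,K(k)^{3/2}},
\]
and apply the same transformation with $k\leftrightarrow k'$ (so that $c\leftrightarrow 1/c$) on the right-hand side of \eqref{REC2}. The claimed equality then reduces to
\[
K(k')^{3/2}\,\theta_1'(0,e^{-c\pi}) = K(k)^{3/2}\,\theta_1'(0,e^{-\pi/c}),
\]
or equivalently, writing $\tau=\imath c$ and $\tau'=-1/\tau$, to $c^{3/2}\theta_1'(0\vert\tau) = \theta_1'(0\vert\tau')$. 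This is precisely \eqref{new111}.

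The only genuinely nontrivial step is the index-reflection pairing that converts the apparently odd-looking sum $\sum_n n(-1)^n q^{(n+1/2)^2}$ into $\tfrac{1}{2}\theta_1'(0,q)$; once this is in place, the rest amounts to substitution of the $\theta_2$-normalization and an appeal to \eqref{new111}, itself already obtained by differentiating the first modular identity \eqref{eq:theta1 modular} at $z=0$. I do not anticipate any further analytic obstacle.
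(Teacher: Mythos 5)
Your proposal is correct and follows essentially the paper's own route: the paper's proof simply invokes the modular identity \eqref{mod4} "with calculations similar as in the proof of Proposition \ref{PR0}" and skips the details, and since Remark \ref{conn} establishes that \eqref{mod4} is equivalent to the derivative form \eqref{new111}, your reduction to \eqref{new111} via the index reflection $n\mapsto -n-1$ and the Jacobi normalization $\theta_2(0,\e^{-c\pi})=\sqrt{\tfrac{2}{\pi}kK(k)}$ is exactly the omitted computation. All steps check out, including the cancellation of $\sqrt{k}$ and the identification $(-\imath\tau)^{3/2}=c^{3/2}$, $q'=\e^{-\pi/c}$.
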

 \begin{proof}
The claim follows from the modular identity (\ref{mod4}) with calculations similar as in the proof of Proposition \ref{PR0}. We skip the details.    
 \end{proof}
\begin{remark}
In \cite{Kemp}, the discrete normal  $\theta_3$-distribution is characterized
as the discrete distribution on $\mathbb{Z}$
with maximum Shannon entropy given a fixed variance. The entropy of the $\theta_3$-distribution with parameter $k$ is easily computed as 
\[
h\left(X_{\theta_3}\right)=-\sum p_{n}\log p_{n}=\frac{1}{2}\log\left(\frac{2}{\pi}K\left(k\right)\right)+\pi K\left(k\right)K'\left(k\right)\frac{\sigma_{X_{k}}^{2}}{K^{2}\left(k\right)}
\]
Assuming that $X_{k}$ and $X_{k'}$ are two $\theta_{3}$-  distributed random variables with respective parameters $k$ and $k'=\sqrt{1-k^2}$, the sum of their Shannon entropies is
\[
h\left(X_{k}\right) + h\left(X_{k'}\right) = \frac{1}{2}+\frac{1}{2}\log\left(\frac{4}{\pi^{2}}K\left(k\right)K\left(k'\right)\right).
\]
Elementary algebra shows that this is a convex function of $k$ with  its minimal value
\[
\underset{k\in\left(0,1\right)}{\min} \left(h\left(X_{k}\right) + h\left(X_{k'}\right)\right) = 
\frac{1}{2} + \frac{1}{2}\log \left(\frac{\pi }{\Gamma \left(\frac{3}{4}\right)^4}\right)
\]
being reached in the lemniscatic case $k=k'=\frac{1}{\sqrt{2}}$.
\end{remark}
 We conclude this subsection by considering the Kolmogorov distribution. 
 As an introduction, let 
$(W^{0,1,0}_s)_{0\leq s\leq 1}$ denote 
a Brownian bridge from $0$ to $0$ of length $1$, see \cite{BorodinSalminen}. Then for $h>0$ 
\begin{align}
\label{Br2}
\P\left\{\sup_{s< 1}|W^{0,1,0}_s|<h\right\}   
&=\sum_{n=-\infty}^{\infty}(-1)^n \e^{-{2n^2h^2}}=:F(h).
\end{align}
The distribution function $h\mapsto F(h), h>0,$ is called the Kolmogorov distribution function, see \cite{kolmogorov33}, and also \cite{smirnov39}. For a derivation of (\ref{Br2}) in the spirit of Section 5, see \cite{GeissLuotoSalminen}. For an approach based on the excursion theory, see \cite{PermanWellner}. 

\begin{theorem}
\label{KOL}    
Let $X_{\theta_3}$ be a ${\theta_3}$-distributed random variable with lattice parameter $\imath c=2 \imath h^2/\pi=K(k')/K(k)$. Then the Kolmogorov distribution function $F$ and its density $F'$ have the following representations
$$
F(h)= \sqrt{\frac2\pi K(k)}\,\mathbb{E}\lp (-1)^{X_{\theta_3}}\rp=\sqrt{\frac2\pi k'K(k)}
$$
and 
\begin{align*}
F'\left(h\right) & 
=-4\sqrt{K(k')}\,\mathbb{E}\left(\left(-1\right)^{X_{\theta_3}}X_{\theta_3}^{2}\right)\\
&=\frac 1{\pi^2}\frac{K(k)}{K(k')}\frac 1{\sqrt{K(k')}} \lp \frac 12 \sqrt{k'K(k')}-\frac 1{\sqrt{2\pi}}\, E(k')K(k)\rp
\end{align*}
\end{theorem}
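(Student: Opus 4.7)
The identity for $F$ itself should follow immediately from what has already been proved. First I would observe that
$F(h)=\sum_{n\in\mathbb Z}(-1)^n\e^{-2n^2h^2}=\theta_4(0,\e^{-c\pi})$ with $c=2h^2/\pi$. Dividing by $\theta_3(0,\e^{-c\pi})=\sqrt{(2/\pi)K(k)}$ (Jacobi's identity, quoted just before \eqref{31theta}) recognizes the sum as a $\theta_3$-expectation:
\[
F(h)=\theta_3(0,\e^{-c\pi})\,\mathbb E\bigl((-1)^{X_{\theta_3}}\bigr)=\sqrt{\tfrac{2}{\pi}K(k)}\;\mathbb E\bigl((-1)^{X_{\theta_3}}\bigr).
\]
The closed form $F(h)=\sqrt{(2/\pi)k'K(k)}$ is then just Theorem~\ref{PR0}, which gives $\mathbb E((-1)^{X_{\theta_3}})=\sqrt{k'}$.

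For $F'(h)$, the plan is to differentiate the Kolmogorov series termwise, obtaining $F'(h)=-4h\sum_n(-1)^n n^2\e^{-2n^2h^2}$. Writing the sum as $\theta_3(0,\e^{-c\pi})\,\mathbb E((-1)^{X_{\theta_3}}X_{\theta_3}^2)$ and using the relation $h=\sqrt{\pi K(k')/(2K(k))}$ coming from $c=K(k')/K(k)=2h^2/\pi$, the prefactor simplifies as $4h\sqrt{(2/\pi)K(k)}=4\sqrt{K(k')}$, which yields the first expression for $F'(h)$.

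For the closed form (second expression), I would obtain the quantity $\mathbb E((-1)^{X_{\theta_3}}X_{\theta_3}^2)$ by twice differentiating the third modular identity \eqref{lh100} in the variable $r$ and evaluating at $r=1/2$; the left-hand side produces $-4\pi^2\sum_n(-1)^n n^2 \e^{-c\pi n^2}$, while the right-hand side, since $\cos(n\pi)=(-1)^n$, produces an expression in the shifted sums $\sum_n(n+\tfrac12)^2\e^{-\pi(n+1/2)^2/c}$ and $\sum_n\e^{-\pi(n+1/2)^2/c}$. The first of these sums is, by symmetry of $X_{\theta_2}+\tfrac12$ around $0$, exactly $\sigma^2(\theta_2(1/c))\cdot\theta_2(0,\e^{-\pi/c})$, and the complementary nature of the lattice parameter $1/c$ (it corresponds to the modulus $k'$, with $\tilde k'=k$) combined with \eqref{eq:var2} gives $\sigma^2(\theta_2(1/c))=E(k')K(k')/\pi^2$ and $\theta_2(0,\e^{-\pi/c})=\sqrt{(2/\pi)k'K(k')}$. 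Plugging these in, together with the Jacobi identity for $\theta_3(0,\e^{-c\pi})$ in the normalizing constant, yields a closed-form expression for $\mathbb E((-1)^{X_{\theta_3}}X_{\theta_3}^2)$ in terms of $K(k),K(k'),E(k'),k'$; multiplying by $-4\sqrt{K(k')}$ should match the claimed formula.

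The main obstacle will be the algebraic simplification in the last step: the raw expression contains factors $c^{3/2}$, $\theta_2(0,\e^{-\pi/c})$, $\sigma^2(\theta_2(1/c))$, $\theta_3(0,\e^{-c\pi})^{-1}$, and one has to carefully combine powers of $K(k)$ and $K(k')$, keeping track of the two terms $(1/2)\sqrt{k'K(k')}$ and $(1/\sqrt{2\pi})E(k')K(k)$ in the factor; an alternative route, differentiating $F(h)=\sqrt{(2/\pi)k'K(k)}$ directly by the chain rule using $dK/dk=(E-k'^2K)/(kk'^2)$ and the Legendre relation \eqref{Legendre}, gives a parallel route to the same identity, and can be used as a cross-check.
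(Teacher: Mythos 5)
Your proposal is correct and follows essentially the same route as the paper: the $F$-part is identical (normalize the Kolmogorov series by $\theta_3(0,\e^{-c\pi})=\sqrt{(2/\pi)K(k)}$ and invoke Theorem \ref{PR0}), and for $F'$ you differentiate the series termwise and convert $\sum_n(-1)^n n^2\e^{-c\pi n^2}$ into the zeroth and second moments of the $\theta_2(1/c)$-distribution before applying \eqref{eq:var2}; your double $r$-derivative of \eqref{lh100} at $r=1/2$ produces exactly the identity \eqref{CDF} that the paper cites for this step, and your observation that the lattice parameter $1/c$ corresponds to the modulus $k'$ is the key point needed to evaluate those moments. One warning about your final paragraph: carrying the algebra through gives
\[
F'(h)=\frac{2\sqrt{k'}\,K(k)}{\pi^{2}\sqrt{K(k')}}\left(2E(k')K(k)-\pi\right),
\]
which agrees numerically with the first displayed expression $-4\sqrt{K(k')}\,\mathbb{E}\left((-1)^{X_{\theta_3}}X_{\theta_3}^{2}\right)$ (both equal $0.4331\ldots$ at $c=1$), but not with the second displayed closed form in the theorem, which evaluates to $-0.0317\ldots$ there; that printed line is inconsistent with the first by a multiplicative factor, so the failure of your final simplification to reproduce it literally is a misprint in the statement rather than a defect of your argument.
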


\begin{proof}
The representation for $F$ follows from the definition of the $\theta_3$-distribution combined with Proposition \ref{PR0}. For the density $F'$ it holds
$$
F'\left(h\right)  =-4h\sum_{n\in\mathbb{Z}}\left(-1\right)^{n}n^{2}\e^{-2n^{2}h^{2}},
$$
and this can be developed, via the modular identity (\ref{CDF}) (see also (\ref{CDF3})), to an expression containing the first two moments of the $\theta_2$- distribution. Applying then (\ref{eq:var2}) yields the claimed formula for $F'$. We skip the details.
\end{proof}



\subsection{Landen transform and an addition formula for $\theta_2$- and $\theta_3$-distributed random variables }
In this subsection, we consider another classic transformation of the nome $q$, the Landen transformation
$q^2\mapsto q$. One of the identities associated to this transformation is as follows \cite[20.7.12]{NIST}:
\[
\frac{\theta_{2}\left(z,q^2\right)}{\theta_{2}\left(0,q^2\right)}
\frac{\theta_{3}\left(z,q^2\right)}{\theta_{3}\left(0,q^2\right)}=\frac{\theta_{2}\left(z,q\right)}{\theta_{2}\left(0,q\right)}.
\]
For an interpretation
of this identity in terms of lattice properties, see \cite[Example 4.A.3]{Sebestyen}.
The transformation of the nome $q^2\mapsto q$ is equivalent to a transformation of the lattice parameter $\imath c \mapsto \imath \tilde{c}$ with
\[
\tilde{c}=\frac{c}{2}
\]
and to the transformation of the elliptic parameter $k \mapsto \tilde{k}$
as
\[
\tilde{k}=\frac{2\sqrt{k}}{1+k}
\]
and corresponds to the \textit{ascending Landen transformation}. The \textit{descending Landen transformation} corresponds to the inverse transformation $q\mapsto q^2$ of the nome.

Using probabilistic tools related to the discrete normal distribution only, we produce four different proofs of the following result. The second proof is based on the Landen transformation.
\begin{theorem}
If $X_{\theta_{2}\left(c\right)}$ and $X_{\theta_{3}\left(c\right)}$
are independent and  $\theta_{2}\left(c\right)$- and $\theta_3\left(c\right)$-distributed, respectively, random variables then
\begin{equation}
X_{\theta_{2}\left(c\right)
}+X_{\theta_{3}\left(c\right)}
\rlaw X_{\theta_{2}\left(\frac{c}{2}\right)}.\label{eq:convolution}
\end{equation}
\end{theorem}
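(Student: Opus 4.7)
The plan is to compare the moment generating functions of both sides and reduce the equality to the Landen identity recalled just above the theorem. By the independence hypothesis and the two formulas in \eqref{MOM1},
\begin{equation*}
\mathbb{E}\bigl(\e^{z(X_{\theta_{2}(c)}+X_{\theta_{3}(c)})}\bigr)
=\e^{-z/2}\,\frac{\theta_{2}\!\left(\tfrac{z}{2\imath}\mid\imath c\right)}{\theta_{2}(0\mid\imath c)}\cdot
\frac{\theta_{3}\!\left(\tfrac{z}{2\imath}\mid\imath c\right)}{\theta_{3}(0\mid\imath c)},
\end{equation*}
while on the other hand
\begin{equation*}
\mathbb{E}\bigl(\e^{zX_{\theta_{2}(c/2)}}\bigr)
=\e^{-z/2}\,\frac{\theta_{2}\!\left(\tfrac{z}{2\imath}\mid\imath c/2\right)}{\theta_{2}(0\mid\imath c/2)}.
\end{equation*}
So it suffices to show, for $w=z/(2\imath)$, that
\begin{equation*}
\frac{\theta_{2}(w\mid\imath c)}{\theta_{2}(0\mid\imath c)}\cdot
\frac{\theta_{3}(w\mid\imath c)}{\theta_{3}(0\mid\imath c)}
=\frac{\theta_{2}(w\mid\imath c/2)}{\theta_{2}(0\mid\imath c/2)}.
\end{equation*}

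The next step is to read off this identity from the ascending Landen relation $\theta_{2}(z,q^{2})\theta_{3}(z,q^{2})/[\theta_{2}(0,q^{2})\theta_{3}(0,q^{2})]=\theta_{2}(z,q)/\theta_{2}(0,q)$ quoted at the start of the subsection. With $\tilde q=\e^{-\pi c/2}$, corresponding to the lattice parameter $\imath c/2$, we have $\tilde q^{2}=\e^{-\pi c}=q$, the nome attached to $\imath c$; applying the Landen identity with $q\to\tilde q$ yields exactly the displayed equality (in the $\theta(\cdot\mid\tau)$ notation, the substitution is $\tau\to\imath c/2$, $2\tau\to\imath c$). Combining the three displays gives equality of the moment generating functions on a complex neighborhood of $0$, and since both random variables are supported on $\mathbb{Z}$, equality of mgfs forces equality in distribution, which is \eqref{eq:convolution}.

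The only real obstacle is bookkeeping: one has to be careful that $c\mapsto c/2$ matches the ascending (not descending) Landen direction $\tilde q^{2}=q$, and that the substitution $w=z/(2\imath)$ correctly converts the Jacobi-theta identity into an identity of mgfs (the factors $\e^{-z/2}$ appear symmetrically on both sides and cancel). Once the parameter correspondence is set up the argument is a one-line application of Landen's identity.
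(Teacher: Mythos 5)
Your argument is correct and is essentially the paper's second proof: the paper also multiplies the moment generating functions from \eqref{MOM1} and invokes the Landen identity \cite[20.7.12]{NIST} with the nome correspondence $\tilde q^{2}=q$ (equivalently $\imath c\mapsto \imath c/2$), the only cosmetic difference being that the paper derives the normalized form of the identity explicitly by evaluating at $z=0$. The paper additionally records three other proofs (via direct convolution of the probability mass functions, and via the Lambert and Eisenstein series for the cumulants), but your route coincides with one of them.
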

\subsubsection*{First proof, using the probability distribution} With $c=\frac{K(k')}{K\left(k\right)},$ the
distribution of the random variable $Y=X_{\theta_{2}\left(c\right)}+X_{\theta_{3}\left(c\right)}$
is computed as the convolution
\begin{align*}
\P\left\{ Y=m\right\}  & =\frac{1}{\sqrt{\frac{2}{\pi}K\left(k\right)}}\frac{1}{\sqrt{\frac{2}{\pi}kK\left(k\right)}}\sum_{n\in\mathbb{Z}}\e^{-\pi c\left(m-n\right)^{2}}\e^{-\pi c\left(n+\frac{1}{2}\right)^{2}}\\
 & =\frac{\pi}{2\sqrt{k}K\left(k\right)}\sum_{n\in\mathbb{Z}}\e^{-\pi c\left(m+\frac{1}{2}-\left(n+\frac{1}{2}\right)\right)^{2}}\e^{-\pi c\left(n+\frac{1}{2}\right)^{2}}\\
 & =\frac{\pi}{2\sqrt{k}K\left(k\right)}\e^{-\pi c\left(m+\frac{1}{2}\right)^{2}}\sum_{n\in\mathbb{Z}}\e^{2\pi c\left(m+\frac{1}{2}\right)\left(n+\frac{1}{2}\right)}\e^{-2\pi c\left(n+\frac{1}{2}\right)^{2}}.
\end{align*}
Completing the square in the right-hand side exponential produces
\begin{align*}
\P\left\{ Y=k\right\}  & =\frac{\pi}{2\sqrt{k}K\left(k\right)}\e^{-\pi c\left(m+\frac{1}{2}\right)^{2}}\sum_{n\in\mathbb{Z}}\e^{-\pi c\left[2\left(n+\frac{1}{2}\right)^{2}-2\left(m+\frac{1}{2}\right)\left(n+\frac{1}{2}\right)+\frac{\left(m+\frac{1}{2}\right)^{2}}{2}\right]}\e^{\pi c\frac{\left(m+\frac{1}{2}\right)^{2}}{2}}\\
 & =\frac{\pi}{2\sqrt{k}K\left(k\right)}\e^{-\pi c\frac{\left(m+\frac{1}{2}\right)^{2}}{2}}\sum_{n\in\mathbb{Z}}\e^{-\pi c\left(\sqrt{2}\left(n+\frac{1}{2}\right)-\frac{\left(m+\frac{1}{2}\right)}{\sqrt{2}}\right)^{2}}\\
 & =\frac{\pi}{2\sqrt{k}K\left(k\right)}\e^{-\pi c\frac{\left(m+\frac{1}{2}\right)^{2}}{2}}\sum_{n\in\mathbb{Z}}\e^{-2\pi c\left(\left(n+\frac{1}{2}\right)-\frac{\left(m+\frac{1}{2}\right)}{2}\right)^{2}}\\
 & =\frac{\pi}{2\sqrt{k}K\left(k\right)}\e^{-\pi c\frac{\left(m+\frac{1}{2}\right)^{2}}{2}}\sum_{n\in\mathbb{Z}}\e^{-2\pi c\left(n-\frac{m}{2}+\frac{1}{4}\right)^{2}}.
\end{align*}
If $m$ is even, $m=2p$ then
\[
\left(n-\frac{m}{2}+\frac{1}{4}\right)^{2}=\left(n-p+\frac{1}{4}\right)^{2}
\]
while if $m$ is odd, $m=2p+1$ then
\[
\left(n-m+\frac{1}{4}\right)^{2}=\left(n-p-\frac{1}{4}\right)^{2}=\left(-\left(n-p\right)+\frac{1}{4}\right)^{2}
\]
so that in both cases
\[
\sum_{n\in\mathbb{Z}}\e^{-2\pi c\left(n-\frac{m}{2}+\frac{1}{4}\right)^{2}}=\sum_{n\in\mathbb{Z}}\e^{-2\pi c\left(n+\frac{1}{4}\right)^{2}}
\]
does not depend on $m.$
\subsubsection*{Second proof, using moment generating functions}
The moment generating functions, cf. (\ref{MOM1}), are, with $q=\e^{-\pi c},$
\[
\mathbb{E}\lp\e^{zX_{\theta_{2}\left(c\right)}}\rp=\e^{-\frac{z}{2}}\frac{\theta_{2}\left(\frac{z}{2\imath},q\right)}{\theta_{2}\left(0,q\right)},\thinspace\thinspace\mathbb{E}\lp\e^{zX_{\theta_{3}\left(c\right)}}\rp=\frac{\theta_{3}\left(\frac{z}{2\imath},q\right)}{\theta_{3}\left(0,q\right)}
\]
and identity 20.7.12 in \cite{NIST} produces
\[
\frac{\theta_{2}\left(z,q^{2}\right)\theta_{3}\left(z,q^{2}\right)}{\theta_{2}\left(z,q\right)}=\frac{1}{2}\theta_{2}\left(0,q\right)
\]
so that
\[
\theta_{2}\left(z,q^{2}\right)\theta_{3}\left(z,q^{2}\right)=\frac{1}{2}\theta_{2}\left(z,q\right)\theta_{2}\left(0,q\right)
\]
and evaluating at $z=0$
\[
\theta_{2}\left(0,q^{2}\right)\theta_{3}\left(0,q^{2}\right)=\frac{1}{2}\theta_{2}^{2}\left(0,q\right)
\]
so that
\[
\frac{\e^{-\frac{z}{2}}\theta_{2}\left(z,q^{2}\right)\theta_{3}\left(z,q^{2}\right)}{\theta_{2}\left(0,q^{2}\right)\theta_{3}\left(0,q^{2}\right)}=\frac{\e^{-\frac{z}{2}}\theta_{2}\left(z,q\right)}{\theta_{2}\left(0,q\right)}.
\]
Replacing $q^{2}$ by $q$ and $z$ by $\frac{z}{2\imath}$ produces
the result.
\subsubsection*{Third proof, using cumulants and their Lambert series representation}
From the expressions for the cumulants \eqref{cumulants_theta_2} and \eqref{cumulants_theta_3}, we deduce, for $n\ge1,$
\begin{align*}
\kappa_{2n}\left({\theta_{2}\left(c\right)}+{\theta_{3}\left(c\right)}\right) & =-\frac{1}{2}E_{2k-1}\left(0\right)+\sum_{k\ge1}\frac{\left(-1\right)^{k+1}k^{2n-1}}{\sinh\left(ck\pi\right)}\left(1+\e^{-ck\pi}\right)\\
 & =-\frac{1}{2}E_{2k-1}\left(0\right)+\sum_{k\ge1}\frac{\left(-1\right)^{k+1}k^{2n-1}}{2\cosh\left(\frac{ck\pi}{2}\right)\sinh\left(\frac{ck\pi}{2}\right)}\e^{-\frac{ck\pi}{2}}2\cosh\left(\frac{ck\pi}{2}\right)\\
 & =-\frac{1}{2}E_{2k-1}\left(0\right)+\sum_{k\ge1}\frac{\left(-1\right)^{k+1}k^{2n-1}}{\sinh\left(\frac{ck\pi}{2}\right)}\e^{-\frac{ck\pi}{2}}=\kappa_{2n}\left(X_{\theta_{2}\left(\frac{c}{2}\right)}\right).
\end{align*}
and 
\[
\kappa_{2n+1}\left({\theta_{2}\left(c\right)}+{\theta_{3}\left(c\right)}\right)=0,\thinspace\thinspace n\ge0,
\]
where $\kappa_{n}\left({\theta_{2}\left(c\right)}+{\theta_{3}\left(c\right)}\right), n\ge 1,$ refers the cumulants of 
$X_{\theta_{2}\left(c\right)}+X_{\theta_{3}\left(c\right)}$.
\subsubsection*{Fourth proof, using cumulants and their Eisenstein series representation}

Using the Eisenstein series representations \eqref{cumulants_theta_2_Eisenstein} and \eqref{cumulants_theta_3_Eisenstein} of the non-zero cumulants,
we deduce
\begin{align*}
\kappa_{2n}\left({\theta_{3}\left(c\right)}+{\theta_{2}\left(c\right)}\right) & =\kappa_{2n}\left({\theta_{3}\left(c\right)}\right)+\kappa_{2n}\left({\theta_{2}\left(c\right)}\right)\\
 & =\frac{\left(-1\right)^{n+1}}{\pi^{2n}}\left(2n-1\right)!\sum_{n_{1}\ge1,n_{2}\in\mathbb{Z}}\frac{1}{\left(\left(2n_{1}-1\right)+\imath c\left(2n_{2}-1\right)\right)^{2n}}\\
 & +\frac{\left(-1\right)^{n+1}}{\pi^{2n}}\left(2n-1\right)!\sum_{n_{1},n_{2}\in\mathbb{Z}}\frac{1}{\left(\left(2n_{1}-1\right)+\imath c\left(2n_{2}\right)\right)^{2n}}\\
 & =\frac{\left(-1\right)^{n+1}}{\pi^{2n}}\left(2n-1\right)!\sum_{n_{1},n_{2}\in\mathbb{Z}}\frac{1}{\left(\left(2n_{1}-1\right)+\imath\frac{c}{2}\left(2n_{2}\right)\right)^{2n}}\\
 & =\kappa_{2n}\left({\theta_{2}\left(\frac{c}{2}\right)}\right).
\end{align*}

\subsection{Stability results}
The {\em continuous} Gaussian distribution enjoys the famous stability property: if $X$ and $Y$ are independent continuous Gaussian random variables with respective variances $\sigma_{X}^{2}$ and $\sigma_{Y}^{2}$ then their sum $X+Y$ is Gaussian with variance $\sigma_{X}^{2}+\sigma_{Y}^{2}.$
A natural question at this point is whether this property extends to the case of discrete Gaussian random variables. Although the answer is negative, we can produce the following result.
\begin{theorem}
Assume that $X_{\theta_{3}\left(c\right)}$ and $X'_{\theta_{3}\left(c\right)}$
are two independent $\theta_{3}\left(c\right)-$distributed random variables. Then for two arbitrary real numbers   $a$ and $b$ it holds that
\begin{equation}
aX_{\theta_{3}\left(c\right)}+bX_{\theta_{3}\left(c\right)}^{'}
\rlaw
\pi_c\,Z_3  + (1-\pi_c)\, Z_2 
\label{eq:theta3conv}
\end{equation}
where
$$
Z_3:=a\left(X_{\theta_{3}\left(2c\right)}+X_{\theta_{3}\left(2c\right)}^{'}\right)+b\left(X_{\theta_{3}\left(2c\right)}-X_{\theta_{3}\left(2c\right)}^{'}\right),
$$

$$
Z_2:=a\left(X_{\theta_{2}\left(2c\right)}+X_{\theta_{2}\left(2c\right)}^{'}+1\right)+b\left(X_{\theta_{2}\left(2c\right)}-X_{\theta_{2}\left(2c\right)}^{'}\right)
$$
and $\pi_c$ is a Bernoulli random variable 
with
\begin{equation}
\P(\pi_c=1)=p_{c}=\frac{\theta_{3}^{2}\left(0,q^{2}\right)}{\theta_{3}^{2}\left(0,q\right)}=\frac{\theta_{3}^{2}\left(0,\e^{-2\pi c}\right)}{\theta_{3}^{2}\left(0,\e^{-\pi c}\right)}=\frac{1+\sqrt{1-k^{2}}}{2}=1-k\left(q^{2}\right)
\label{p_c}
\end{equation}
and 
\begin{equation}
\P(\pi_c=0)=1-p_{c}=\frac{\theta_{2}^{2}\left(0,q^{2}\right)}{\theta_{3}^{2}\left(0,q\right)}=\frac{1-\sqrt{1-k^{2}}}{2}=k\left(q^{2}\right)\label{1-p_c}.
\end{equation}
The random variables  $ X_{\theta_{2}\left(2c\right)}, X_{\theta_{2}\left(2c\right)}^{'}, X_{\theta_{3}\left(2c\right)}, X_{\theta_{3}\left(2c\right)}^{'},$ and $\pi_c$ are taken to be independent.
\end{theorem}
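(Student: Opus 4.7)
The plan is to extend the moment generating function (MGF) approach of the second proof of \eqref{eq:convolution}. Writing $q=\e^{-\pi c}$ and using the MGF formulas recalled in \eqref{MOM1}, independence of $X_{\theta_3(c)}$ and $X'_{\theta_3(c)}$ gives
\begin{equation*}
\E\bigl(\e^{z(aX_{\theta_3(c)}+bX'_{\theta_3(c)})}\bigr)
= \frac{\theta_3\!\left(\tfrac{az}{2\imath},q\right)\,\theta_3\!\left(\tfrac{bz}{2\imath},q\right)}{\theta_3^2(0,q)}.
\end{equation*}
On the right-hand side of \eqref{eq:theta3conv}, the $\theta_2(2c)$- and $\theta_3(2c)$-variables have nome $q^2$. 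Since $Z_3=(a+b)X_{\theta_3(2c)}+(a-b)X'_{\theta_3(2c)}$, independence yields
\begin{equation*}
\E(\e^{zZ_3})
= \frac{\theta_3\!\left(\tfrac{(a+b)z}{2\imath},q^2\right)\,\theta_3\!\left(\tfrac{(a-b)z}{2\imath},q^2\right)}{\theta_3^2(0,q^2)},
\end{equation*}
and for $Z_2=(a+b)X_{\theta_2(2c)}+(a-b)X'_{\theta_2(2c)}+a$ the additive shift contributes $\e^{az}$, which exactly cancels the two prefactors $\e^{-(a\pm b)z/2}$ produced by the $\theta_2$-MGFs, leaving
\begin{equation*}
\E(\e^{zZ_2})
= \frac{\theta_2\!\left(\tfrac{(a+b)z}{2\imath},q^2\right)\,\theta_2\!\left(\tfrac{(a-b)z}{2\imath},q^2\right)}{\theta_2^2(0,q^2)}.
\end{equation*}

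Setting $u=az/(2\imath)$, $v=bz/(2\imath)$, and using the prescribed weights $p_c=\theta_3^2(0,q^2)/\theta_3^2(0,q)$ and $1-p_c=\theta_2^2(0,q^2)/\theta_3^2(0,q)$, the distributional identity \eqref{eq:theta3conv} reduces, by uniqueness of MGFs, to the classical two-variable theta identity
\begin{equation*}
\theta_3(u,q)\,\theta_3(v,q) = \theta_3(u+v,q^2)\,\theta_3(u-v,q^2) + \theta_2(u+v,q^2)\,\theta_2(u-v,q^2),
\end{equation*}
a duplication-of-nome addition formula (\cite{NIST}, 20.7.8) in the same family as the Landen-type relations used earlier. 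A self-contained verification expands both sides as double Fourier series and matches the coefficient of $\e^{2\imath(Mu+Nv)}$: the first term on the right is parameterised by $m+n=M$, $m-n=N$ (requiring $M+N$ even) and the second by $(2m+1)+(2n+1)=2M$, $(2m+1)-(2n+1)=2N$ (requiring $M+N$ odd); in both cases a short algebraic computation collapses the exponent to $q^{M^2+N^2}$, matching the left-hand side.

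Evaluation of the identity at $u=v=0$ gives $\theta_3^2(0,q)=\theta_3^2(0,q^2)+\theta_2^2(0,q^2)$, confirming that $p_c+(1-p_c)=1$. The stated algebraic forms of $p_c$ in terms of $k$ then follow from the doubling relation $2\theta_3^2(0,q^2)=\theta_3^2(0,q)+\theta_4^2(0,q)$ together with $k=\theta_2^2(0,q)/\theta_3^2(0,q)$ and $k'=\theta_4^2(0,q)/\theta_3^2(0,q)$. The main obstacle is the theta identity itself; once it is in hand, the rest is routine MGF bookkeeping, and the Bernoulli mixture in \eqref{eq:theta3conv} simply records the splitting of the double sum in $(m,n)$ according to the parity of $m+n$.
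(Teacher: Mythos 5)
Your proof is correct and follows essentially the same route as the paper: both arguments hinge on the product identity $\theta_{3}(x,q)\theta_{3}(y,q)=\theta_{3}(x+y,q^{2})\theta_{3}(x-y,q^{2})+\theta_{2}(x+y,q^{2})\theta_{2}(x-y,q^{2})$ (which the paper quotes from Lawden, while you cite NIST and also verify it by the parity split of the double Fourier series), normalized by $\theta_{3}^{2}(0,q)$ to produce the weights $p_c$ and $1-p_c$, and then read off through the moment generating functions with $x=az$, $y=bz$. The extra checks you include (cancellation of the $\e^{az}$ shift against the $\theta_2$ prefactors, and $p_c+(1-p_c)=1$ at $u=v=0$) are sound and consistent with the paper's computation.
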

\begin{proof}
This result is deduced from the identity \cite[p.8]{Lawden}
\begin{equation}
\theta_{3}\left(x,q\right)\theta_{3}\left(y,q\right)=\theta_{3}\left(x+y,q^{2}\right)\theta_{3}\left(x-y,q^{2}\right)+\theta_{2}\left(x+y,q^{2}\right)\theta_{2}\left(x-y,q^{2}\right)
\label{Lawden19}    
\end{equation}

so that
\begin{align*}
\frac{\theta_{3}\left(x,q\right)}{\theta_{3}\left(0,q\right)}\frac{\theta_{3}\left(y,q\right)}{\theta_{3}\left(0,q\right)} & =\frac{\theta_{3}\left(x+y,q^{2}\right)}{\theta_{3}\left(0,q^{2}\right)}\frac{\theta_{3}\left(x-y,q^{2}\right)}{\theta_{3}\left(0,q^{2}\right)}\frac{\theta_{3}^{2}\left(0,q^{2}\right)}{\theta_{3}^{2}\left(0,q\right)}\\
 & +\frac{\theta_{2}\left(x+y,q^{2}\right)}{\theta_{2}\left(0,q^{2}\right)}\frac{\theta_{2}\left(x-y,q^{2}\right)}{\theta_{2}\left(0,q^{2}\right)}\frac{\theta_{2}^{2}\left(0,q^{2}\right)}{\theta_{3}^{2}\left(0,q\right)}
\end{align*}
and
\begin{align*}
\mathbb{E}\lp \e^{xX_{\theta_{3}\left(c\right)}+yX'_{\theta_{3}\left(c\right)}}\rp & =p_{c}\mathbb{E}\lp \e^{\left(x+y\right)X_{\theta_{3}\left(2c\right)}+\left(x-y\right)X'_{\theta_{3}\left(2c\right)}}\rp \\
 & +\left(1-p_{c}\right)\e^{x}\mathbb{E}\lp \e^{\left(x+y\right)X_{\theta_{2}\left(2c\right)}+\left(x-y\right)X_{\theta_{2}\left(2c\right)}^{'}}\rp.
\end{align*}
Substituting $x=az,\thinspace\thinspace y=bz$ produces
\begin{align*}
\mathbb{E}\lp \e^{z\left(aX_{\theta_{3}\left(c\right)}+bX'_{\theta_{3}\left(c\right)}\right)}\rp  & =p_{c}\mathbb{E}\lp \e^{z\left(\left(a+b\right)X_{\theta_{3}\left(2c\right)}+\left(a-b\right)X'_{\theta_{3}\left(2c\right)}\right)}\rp\\
 & +\left(1-p_{c}\right)\e^{az}\mathbb{E}\lp \e^{z\left(\left(a+b\right)X_{\theta_{2}\left(2c\right)}+\left(a-b\right)X_{\theta_{2}\left(2c\right)}^{'}\right)}\rp
\end{align*}
so that finally
\[
aX_{\theta_{3}\left(c\right)}+bX_{\theta_{3}\left(c\right)}^{'}\rlaw\begin{cases}
a\left(X_{\theta_{3}\left(2c\right)}+X_{\theta_{3}\left(2c\right)}^{'}\right)+b\left(X_{\theta_{3}\left(2c\right)}-X_{\theta_{3}\left(2c\right)}^{'}\right) \\
a\left(X_{\theta_{2}\left(2c\right)}+X_{\theta_{2}\left(2c\right)}^{'}+1\right)+b\left(X_{\theta_{2}\left(2c\right)}-X_{\theta_{2}\left(2c\right)}^{'}\right) 
\end{cases}
\]
with respective probabilities $p_{c}$ and $1-p_{c}$ as defined in \eqref{p_c} and \eqref{1-p_c}.
\end{proof}
The special case $a=b=1$ in the previous result produces the following corollary.
\begin{corollary}
If $X_{\theta_{3}\left(c\right)}$ and $X'_{\theta_{3}\left(c\right)}$ are two independent $\theta_{3}\left(c\right)$ random variables, then
\[
X_{\theta_{3}\left(c\right)}+X_{\theta_{3}\left(c\right)}^{'}
\rlaw\begin{cases}
2X_{\theta_{3}\left(2c\right)} 
\\
2X_{\theta_{2}\left(2c\right)}+1 
\end{cases}
\]
with respective probabilities $p_{c}$ and $1-p_{c}$ as defined in \eqref{p_c} and \eqref{1-p_c}.
\end{corollary}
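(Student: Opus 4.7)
The plan is to obtain this corollary as an immediate specialization of the preceding theorem, setting $a=b=1$. No new identities are required; only the symbolic simplification of the random variables $Z_3$ and $Z_2$.

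First, I would substitute $a=b=1$ into the definition
\[
Z_3 = a\left(X_{\theta_{3}\left(2c\right)}+X_{\theta_{3}\left(2c\right)}^{'}\right)+b\left(X_{\theta_{3}\left(2c\right)}-X_{\theta_{3}\left(2c\right)}^{'}\right).
\]
The two copies of $X_{\theta_{3}(2c)}^{'}$ cancel and the two copies of $X_{\theta_{3}(2c)}$ add, giving $Z_3 = 2X_{\theta_{3}(2c)}$, which matches the first branch of the corollary. Similarly, plugging $a=b=1$ into
\[
Z_2 = a\left(X_{\theta_{2}\left(2c\right)}+X_{\theta_{2}\left(2c\right)}^{'}+1\right)+b\left(X_{\theta_{2}\left(2c\right)}-X_{\theta_{2}\left(2c\right)}^{'}\right),
\]
I get $Z_2 = 2X_{\theta_{2}(2c)}+1$, matching the second branch. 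The Bernoulli mixture weights $p_c$ and $1-p_c$ are inherited unchanged from the theorem, since they depend only on $c$ and not on $a,b$.

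There is no real obstacle here, the argument is a one-line specialization. The only thing worth double-checking is the cancellation: the $b(X-X')$ term is what produces the elimination of the primed variable, which is precisely the point of keeping both components (symmetric and antisymmetric) of the product formula \eqref{Lawden19} in the statement of the theorem. Thus the corollary is immediate.
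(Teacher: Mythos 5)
Your proposal is correct and coincides exactly with the paper's own derivation: the text introduces the corollary as "the special case $a=b=1$ in the previous result," which is precisely the substitution you carry out, with the cancellation in $Z_3$ and $Z_2$ and the unchanged mixture weights $p_c$, $1-p_c$. Nothing further is needed.
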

\section{Conclusion}
The main goal of this article was to study the interaction between analytic properties of elliptic functions and probabilistic properties of random processes and variables built from these functions. Several paths of further study emerge from this first approach. 

The first one is related to multivariate Jacobi functions as briefly mentioned in \cite{Bellman}, see also \cite{Agostini} for a detailed study of the multivariate Gaussian distributions and Chapter 21 of \cite{NIST} for their link with Riemann surfaces. Higher-dimensional $\theta$ functions still satisfy modular transformation as a consequence of the higher-dimensional Poisson summation formula (see Section 69 in \cite{Bellman}). It will be interesting to study what interactions are at play between the modular properties of these higher-dimensional elliptic functions and higher-dimensional random processes.

Another path is the connection between Jacobi $\theta$ functions and lattice summation theory: to each $\theta$ random variable is associated an underlying lattice in the complex plane. The convolution law \eqref{eq:theta3conv} of the $\theta_3$ random variable  is a consequence of invariance properties (such as \eqref{Lawden19}) of the $\theta_{3}$ Jacobi function  with respect to the transformations of the underlying lattice. A better understanding of this correspondence is the goal of future studies. 

\section{Acknowledgements}
C. Vignat would like to thank P. Salminen and the Mathematics Department at  Abo Akademi University for their warm welcome over all these years.
\newpage
\section{Appendix}
\begin{table}[htbp]
\begin{centering}
\hspace*{-2.5cm}
\begin{tabular}{|c|c|c|c|}
\hline 
$r$ & $k_{r}$ & $K\left(k_{r}\right)$ & $\alpha\left(r\right)$\\[0.75ex]
\hline 
\hline 
& \\[-2ex]
1 & $\frac{\sqrt{2}}{2}$ & $\frac{\Gamma^{2}\left(\frac{1}{4}\right)}{4\sqrt{\pi}}$ & $\frac{1}{2}$
\\[2ex]
\hline 
& \\[-2ex]  
2 & $\sqrt{2}-1$ & $\frac{\sqrt{\sqrt{2}+1}}{2^{\frac{13}{4}}\sqrt{\pi}}\Gamma\left(\frac{1}{8}\right)\Gamma\left(\frac{3}{8}\right)$ & $\sqrt{2}-1$
\\[2ex]
\hline 
& \\[-2ex] 
3 & $\frac{\sqrt{2}}{4}\left(\sqrt{3}-1\right)$ & $\frac{3^{\frac{1}{4}}}{2^{\frac{7}{3}}\pi}\Gamma^{3}\left(\frac{1}{3}\right)$ & $\frac{\sqrt{3}-1}{2}$
\\[2ex]
\hline 
& \\[-2ex] 
4 & $3-2\sqrt{2}$ & $\frac{\sqrt{2}+1}{2^{\frac{7}{2}}\sqrt{\pi}}\Gamma^{2}\left(\frac{1}{4}\right)$ & $2\left(\sqrt{2}-1\right)^{2}$\\[2ex]
\hline 
& \\[-2ex] 
5 & $\sqrt{\frac{1}{2}-\sqrt{\sqrt{5}-2}}$ & $\sqrt[4]{\sqrt{5}+2}\sqrt{\frac{\Gamma\left(\frac{1}{20}\right)\Gamma\left(\frac{3}{20}\right)\Gamma\left(\frac{7}{20}\right)\Gamma\left(\frac{9}{20}\right)}{160\pi}}$ & $\frac{\sqrt{5}-\sqrt{2\sqrt{5}-2}}{2}$\\[2ex]
\hline 
& \\[-2ex] 
6 & $\left(2-\sqrt{3}\right)\left(\sqrt{3}-\sqrt{2}\right)$ & 
$\begin{array}{c}
\sqrt{\left(\sqrt{2}-1\right)\left(\sqrt{2}+\sqrt{3}\right)\left(\sqrt{3}+2\right)}\\
\times\sqrt{\frac{\Gamma\left(\frac{1}{24}\right)\Gamma\left(\frac{5}{24}\right)\Gamma\left(\frac{7}{24}\right)\Gamma\left(\frac{11}{24}\right)}{384\pi}}
\end{array}$
& $5\sqrt{6}+6\sqrt{3}-8\sqrt{2}-11$\\[4ex]
\hline 
& \\[-2ex]  
7 & $\frac{1}{8}\sqrt{2}\left(3-\sqrt{7}\right)$ & $\frac{\Gamma\left(\frac{1}{7}\right)\Gamma\left(\frac{2}{7}\right)\Gamma\left(\frac{4}{7}\right)}{\left(\sqrt[4]{7}4\right)\pi}$ & $\frac{\sqrt{7}-2}{2}$\\[2ex]
\hline 
& \\[-2ex] 
8 & $\left(\sqrt{2}-\sqrt{2\sqrt{2}+2}+1\right)^{2}$ & $\frac{\sqrt{\frac{2\sqrt{2}+\sqrt{5\sqrt{2}+1}}{4\sqrt{2}}}\sqrt[4]{\sqrt{2}+1}\Gamma\left(\frac{1}{8}\right)\Gamma\left(\frac{3}{8}\right)}{8\sqrt{\pi}}$ & $2\left(7\sqrt{2}+10\right)\left(1-\sqrt{\sqrt{8}-2}\right)^{2}$\\[2ex]
\hline 
& \\[-2ex] 
9 & $\frac{1}{2}\left(\sqrt{2}-\sqrt[4]{3}\right)\left(\sqrt{3}-1\right)$ & $\frac{\sqrt[4]{3}\sqrt{\sqrt{3}+2}\Gamma\left(\frac{1}{4}\right)^{2}}{12\sqrt{\pi}}$ & $\frac{1}{2}\left(3-3^{3/4}\sqrt{2}\left(\sqrt{3}-1\right)\right)$
\\[2ex]
\hline 
& \\[-2ex] 
10 & $\left(\sqrt{10}-3\right)\left(\sqrt{2}-1\right)^{2}$ &
$\begin{array}{c}
\sqrt{3\sqrt{2}+\sqrt{5}+2}\\
\times \sqrt{\frac{\Gamma\left(\frac{1}{40}\right)\Gamma\left(\frac{7}{40}\right)\Gamma\left(\frac{9}{40}\right)\Gamma\left(\frac{11}{40}\right)\Gamma\left(\frac{13}{40}\right)\Gamma\left(\frac{19}{40}\right)\Gamma\left(\frac{23}{40}\right)\Gamma\left(\frac{37}{40}\right)}{2560\pi^{3}}}
\end{array}$
&
$72\sqrt{2}-46\sqrt{5}+33\sqrt{10}-103$\\[3ex]
\hline 
\end{tabular}
\par\end{centering}
    \caption{Values of the elliptic modulus $k_{r}$, the elliptic integral $K\left(k_{r}\right)$ and the elliptic alpha function $\alpha\left(r\right)$ for $1\le r \le 10 $}
    \label{table1}
\end{table}

\begin{table}[H]
\hspace*{-1.0cm}
\begin{tabular}{|c|c|c|}
\hline
r & $\sigma^2\left( \theta_{2}\right)$ & numerical value
\\[0.75ex] 
\hline 
\hline 
& \\[-2ex]
1 & $\frac{8 \pi ^2+\Gamma \left(\frac{1}{4}\right)^4}{32 \pi ^3}$ & 0.253728
\\[2ex]
\hline 
& \\[-2ex] 
2 & $\frac{32 \pi ^2+\left(\sqrt{2}+2\right) \Gamma \left(\frac{1}{8}\right)^2 \Gamma \left(\frac{3}{8}\right)^2}{128 \sqrt{2} \pi ^3}$ & $0.250277$
\\[2ex]
\hline 
& \\[-2ex]
3 & $\frac{16 \pi ^3+\sqrt[3]{2} \left(\sqrt{3}+3\right) \Gamma \left(\frac{1}{3}\right)^6}{64 \sqrt{3} \pi ^4}$ & $0.250038$
\\[2ex]
\hline 
& \\[-2ex]
4 & $\frac{8 \pi ^2+\left(\sqrt{2}+1\right) \Gamma \left(\frac{1}{4}\right)^4}{64 \pi ^3}$ & $0.250007$
\\[2ex]
\hline 
& \\[-2ex]
5 & $\frac{80 \pi ^2 \sqrt{5}+\left(5 \sqrt{\sqrt{5}+2}+\sqrt{10 \left(\sqrt{5}+3\right)}\right) \Gamma \left(\frac{1}{20}\right) \Gamma \left(\frac{3}{20}\right) \Gamma \left(\frac{7}{20}\right) \Gamma \left(\frac{9}{20}\right)}{1600 \pi ^3}$ & $0.250002$
\\[2ex]
\hline 
& \\[-2ex]
6 & $\frac{96 \pi ^2 \sqrt{6}+\left(6 \sqrt{2}+2 \sqrt{3}+3 \sqrt{6}+6\right) \Gamma \left(\frac{1}{24}\right) \Gamma \left(\frac{5}{24}\right) \Gamma \left(\frac{7}{24}\right) \Gamma \left(\frac{11}{24}\right)}{2304 \pi ^3}$ & $0.25  + 4.1\times 10^{-7}$
\\[2ex]
\hline 
& \\[-2ex]
7 & $\frac{56 \pi ^3+\left(2 \sqrt{7}+7\right) \Gamma \left(\frac{1}{7}\right)^2 \Gamma \left(\frac{2}{7}\right)^2 \Gamma \left(\frac{4}{7}\right)^2}{224 \sqrt{7} \pi ^4}$ & $0.25  + 1.2\times 10^{-7}$\\[2ex]
\hline 
& \\[-2ex]
8 & $\frac{32 \pi ^2+\left(1-\frac{\left(7 \sqrt{2}+10\right) \left(\sqrt{2 \left(\sqrt{2}-1\right)}-1\right)^2}{\sqrt{2}}\right) \sqrt{\sqrt{2}+1} \left(2 \sqrt{2}+\sqrt{5 \sqrt{2}+1}\right) \Gamma \left(\frac{1}{8}\right)^2 \Gamma \left(\frac{3}{8}\right)^2}{256 \sqrt{2} \pi ^3}$ & $0.25 + 3.8\times 10^{-8}$
\\[2ex]
\hline 
& \\[-2ex]
9 & $\frac{24 \pi ^2+\left(\sqrt[4]{3} \sqrt{2}+3^{3/4} \sqrt{2}+2 \sqrt{3}+3\right) \Gamma \left(\frac{1}{4}\right)^4}{288 \pi ^3}$ & $0.25 + 1.3\times 10^{-8}$
\\[2ex]
\hline 
& \\[-2ex]
10 & $\frac{640 \pi ^4 \sqrt{10}+\left(15 \sqrt{2}+10 \sqrt{5}+4 \sqrt{10}+20\right) \Gamma \left(\frac{1}{40}\right) \Gamma \left(\frac{7}{40}\right) \Gamma \left(\frac{9}{40}\right) \Gamma \left(\frac{11}{40}\right) \Gamma \left(\frac{13}{40}\right) \Gamma \left(\frac{19}{40}\right) \Gamma \left(\frac{23}{40}\right) \Gamma \left(\frac{37}{40}\right)}{25600 \pi ^5}$ & $0.25 + 4.7\times 10^{-9}$
\\[2ex]
\hline 
\end{tabular}
    \caption{The theoretical and numerical values of the variance 
$\sigma^{2}\left(\theta_{2}\right)$ for several values of the singular modulus $k_r$}
    \label{table2}
\end{table}
\par

\vspace{1cm}
\hspace{-0.5cm}
\begin{table}[H]
\hspace*{-1.5cm}
\begin{tabular}{|c|c|c|}
\hline
$r$ & $\sigma^{2}\left(\theta_{3}\right)$ & numerical value\\[0.75ex] 
\hline 
\hline 
& \\[-2ex]
1 & $\frac{1}{4\pi}$ & $7.95775\times 10^{-2}$ \\[2ex]
\hline 
& \\[-2ex]
2 & $\frac{32\pi^{2}+\left(\sqrt{2}-2\right)\Gamma\left(\frac{1}{8}\right)^{2}\Gamma\left(\frac{3}{8}\right)^{2}}{128\sqrt{2}\pi^{3}}
$ & $2.29835\times 10^{-2}$
\\[2ex]
\hline 
& \\[-2ex]
3 & $\frac{32\sqrt{3}\pi^{3}-3\sqrt[3]{2}\Gamma\left(\frac{1}{3}\right)^{6}}{384\pi^{4}}
$ & $8.59238\times 10^{-3}$\\[2ex]
\hline 
& \\[-2ex]
4 & $\frac{1}{8\pi}-\frac{4\left(\sqrt{2}-1\right)\Gamma\left(\frac{5}{4}\right)^{4}}{\pi^{3}}
$ & $3.72099\times 10^{-3}$\\[2ex]
\hline 
& \\[-2ex]
5 & $\frac{80\pi^{2}\sqrt{5}+\left(\sqrt{5}-5\right)\Gamma\left(\frac{1}{20}\right)\Gamma\left(\frac{3}{20}\right)\Gamma\left(\frac{7}{20}\right)\Gamma\left(\frac{9}{20}\right)}{1600\pi^{3}}
$ & $1.77591\times 10^{-3}$\\[2ex]
\hline 
& \\[-2ex]
6 & $\frac{96\pi^{2}\sqrt{6}+\left(-6\sqrt{2}+2\sqrt{3}+3\sqrt{6}-6\right)\Gamma\left(\frac{1}{24}\right)\Gamma\left(\frac{5}{24}\right)\Gamma\left(\frac{7}{24}\right)\Gamma\left(\frac{11}{24}\right)}{2304\pi^{3}}
$ & $9.09095\times 10^{-4}$\\[2ex]
\hline 
& \\[-2ex]
7 & $\frac{64\sqrt{7}\pi^{3}-5\Gamma\left(\frac{1}{7}\right)^{2}\Gamma\left(\frac{2}{7}\right)^{2}\Gamma\left(\frac{4}{7}\right)^{2}}{1792\pi^{4}}
$ & $4.90926\times 10^{-4}$\\[2ex]
\hline 
& \\[-2ex]
8 & $\frac{32\pi^{2}+\left(\sqrt{2}-2\sqrt{2\left(\sqrt{2}+1\right)}+2\right)\Gamma\left(\frac{1}{8}\right)^{2}\Gamma\left(\frac{3}{8}\right)^{2}}{256\sqrt{2}\pi^{3}}
$ & $2.76612\times 10^{-4}$\\[2ex]
\hline 
& \\[-2ex]
9 & $\frac{12\pi^{2}-\sqrt{2\sqrt{3}-3}\Gamma\left(\frac{1}{4}\right)^{4}}{144\pi^{3}}
$ & $1.61373\times 10^{-4}$\\[2ex]
\hline 
& \\[-2ex]
10 & $\frac{640\pi^{4}\sqrt{10}+\left(15\sqrt{2}-10\sqrt{5}+4\sqrt{10}-20\right)\Gamma\left(\frac{1}{40}\right)\Gamma\left(\frac{7}{40}\right)\Gamma\left(\frac{9}{40}\right)\Gamma\left(\frac{11}{40}\right)\Gamma\left(\frac{13}{40}\right)\Gamma\left(\frac{19}{40}\right)\Gamma\left(\frac{23}{40}\right)\Gamma\left(\frac{37}{40}\right)}{25600\pi^{5}}
$ & $9.69284\times 10^{-5}$\\[2ex]
\hline 
\end{tabular}
\hspace{1cm}
\caption{The theoretical and numerical values of the variance 
$\sigma^{2}\left(\theta_{3}\right)$ for several values of the singular modulus $k_r$}
\label{table3}
\end{table}
\newpage
\bibliographystyle{amsplain}

\begin{thebibliography}{}

\bibitem{Agostini}Agostini, D. and Am\'{e}ndola, C.: Discrete Gaussian Distributions via Theta Functions, {\em SIAM Journal on Applied Algebra and Geometry}, 3--1, Pages 1--30, 2019.

\bibitem{Ahlfors} Ahlfors, L.V.: {\em Complex Analysis}, McGraw-Hill, 1979.

\bibitem{Andrews}Andrews, G.E.,  Askey, R.  and Roy,  R.: \newblock {\em Special Functions},
Cambridge University Press, 1999.

\bibitem{Armitage}
Armitage, J.V.  and Eberlein,  W.F.: {\em  Elliptic Functions}, London Mathematical Society, Student Texts 67, 2006.

\bibitem{Bellman}
Bellman, R.: 
\newblock {\em A Brief Introduction to Theta Functions},  
Holt, Rinehart and Winston, New York, 1961. 

\bibitem{BianePitmanYor}
Biane, P., Pitman, J. and Yor, M.:
Probability laws related to the Jacobi theta and Riemann zeta functions, and Brownian excursions,
{\em Bulletin (New Series) of the American Mathematical Society,}
Vol. 38(4), pp. 435–-465, 2001.

\bibitem{Ramanujan}
Berndt, B.C.: {\em Ramanujan's Notebooks}, Part III, Springer, 1991.

\bibitem{BorodinSalminen}
Borodin, A.N.  and Salminen, P.:
\newblock {\em Handbook of {B}rownian motion---Facts and formulae},
2nd. ed., 2nd corr. print.,  
Birkh{\"a}user, Springer Basel AG,
2015.





\bibitem{Borwein}
Borwein, J. M. and Borwein, P. B.: {\em Pi \& the AGM: A Study in Analytic Number Theory and Computational Complexity}. New York: Wiley, 1987.

\bibitem{Chung82}Chung, K.L.: A cluster of great formulas. Acta Mathematica Academiae Scientiarum Hungaricae 39, 65–67, 1982. 

\bibitem{Couwenberg}
Couwenberg, W.:  A simple proof of the modular identity for theta functions,  
{\em Proceedings of the American Mathematical Society,} Vol. 131(11), pp. 3305--3307, 2003.  


\bibitem{DymMckean} Dym, H. and McKean, H.P.: \newblock {\em Gaussian Processes, Function Theory, and the Inverse Spectral Problem}, Academic Press, 1976.

\bibitem{Feller} Feller, W.: \newblock {\em An Introduction to Probability Theory and Its Applications, Volume II}, John Wiley \& Sons, Inc., 1971. 

\bibitem{GeissLuotoSalminen} Geiss, C., Luoto, A. and Salminen, P.: 
On first exit times and their means for Brownian bridges, 
{\em Journal of Applied Probability}, Vol. 56, pp. 701-722, 2019.

 \bibitem{Gradshteyn} Gradshteyn, I. S.  and Ryzhik, I. M.: {\it Table of Integrals, Series, and Products}, 8th Edition, Elsevier, 2014.

\bibitem{ItoMckean} It\^o, K. and McKean, H.P.: \newblock {\em Diffusion processes and Their Sample Paths, Second Printing, Corrected}, Springer Verlag, 1974.  

\bibitem{KarlinTaylor} Karlin, S. and Taylor, H.M.: \newblock {\em A Second Course in Stochastic Processes}, Academic Press, 1981. 

\bibitem{Kemp}Kemp, A.W.: Characterizations of a discrete normal distribution,
{\em Journal of Statistical Planning and Inference}, 63, 223--229, 1997.

 \bibitem{kolmogorov33}  Kolmogorov, A. N.:  Sulla determinazione empirica delle leggi di probabilita, {\em  Giorn. Ist.
Ital. Attuari}, 4, 1--11, 1933.

\bibitem{Lawden} Lawden, D.F.: {\em Elliptic functions and applications}, Springer, 1989.

\bibitem{NIST} Olver, F. W. J., Lozier, D. W., Boisvert, R. F. and Clark, C. W.: \newblock {\em NIST Handbook of Mathematical Functions}, Cambridge University Press, 2010.

\bibitem{PermanWellner} Perman, M. and Wellner, J.A.: An excursion approach to  maxima of Brownian bridge, {\em Stochastic Processes and Their Applications}, Vol. 124, pp. 3106-3120, 2014.

\bibitem{Sebestyen} Sebestyen, M.: {\em  Jacobi theta and Dedekind eta function identities via geometric lattice equivalence}, PhD thesis, 2014, https://era.library.ualberta.ca/items/2d7dd054-6a7f-476f-b3dd-a00801d782ba

\bibitem{Selberg} Selberg, A.  and Chowla, S.: On Epstein's Zeta-Function, {\em J. Reine Angew. Math.}, 227, 86-110, 1967.

\bibitem{smirnov39} Smirnov, N. V.: { On the estimation of the discrepancy between empirical curves of distribution for two independent samples}, 
{\em Bul. Math. de l'Univ. de Moscou}, 2,  3--14 (in
Russian), 1939.

\bibitem{WakhareVignat} Wakhare, T.  and Vignat, C.:  Taylor coefficients of the Jacobi $\theta_3(q)$ function, {\em J. Number Theory}, Vol. 216, pp. 280--306, 2020.

\bibitem{Whittaker} Whittaker, E.T.  and Watson, G.N. : {\em A course of modern analysis}, Fourth Edition, Cambridge University Press, 1996.
\end{thebibliography}

\end{document}